\documentclass[reqno]{amsart}

\usepackage{amsmath,amssymb,amsthm,mathtools}
\usepackage{bm}
\usepackage{enumitem}
\usepackage{graphicx}
\usepackage{booktabs}
\usepackage[hidelinks]{hyperref}
\usepackage[capitalize,noabbrev]{cleveref}
\graphicspath{{new_implementation/figures/}{../Implementation/data/}{Implementation/figures/}{Implementation/data/}}

\IfFileExists{algorithm.sty}{\usepackage{algorithm}}{%
  \makeatletter
  \newcommand{\ALGgobblecap}[1]{}
  \newcounter{algorithm}[section]
  \renewcommand{\thealgorithm}{\thesection.\arabic{algorithm}}
  \newenvironment{algorithm}[1][]{%
    \par\addvspace{\medskipamount}%
    \refstepcounter{algorithm}%
    \noindent\begin{minipage}{\linewidth}%
    \noindent\rule{\linewidth}{0.8pt}\par\smallskip
    \noindent\textbf{Algorithm~\thealgorithm.}\par
    \nobreak\smallskip
    \noindent\rule{\linewidth}{0.4pt}\par\nobreak\smallskip
    \let\caption\ALGgobblecap
  }{%
    \par\smallskip\noindent\rule{\linewidth}{0.8pt}%
    \end{minipage}\par\addvspace{\medskipamount}%
  }
  \makeatother
}
\IfFileExists{algpseudocode.sty}{\usepackage{algpseudocode}}{%
  \makeatletter
  \newcounter{ALG@line}
  \newenvironment{algorithmic}[1][0]{%
    \setcounter{ALG@line}{0}%
    \par\addvspace{0.3em}\noindent
    \list{\refstepcounter{ALG@line}\arabic{ALG@line}:}{%
      \setlength{\leftmargin}{2.5em}%
      \setlength{\itemsep}{3pt}\setlength{\parsep}{0pt}%
      \setlength{\topsep}{0pt}\setlength{\partopsep}{0pt}%
      \setlength{\labelwidth}{1.8em}\setlength{\labelsep}{0.5em}%
    }%
  }{\endlist\par\addvspace{0.3em}}
  \newcommand{\State}{\item}
  \newcommand{\Require}{\item[\textbf{Require:}]}
  \newcommand{\Ensure}{\item[\textbf{Ensure:}]}
  \newcommand{\Return}{\textbf{return}\space}
  \newenvironment{ALG@forblock}{%
    \list{\refstepcounter{ALG@line}\arabic{ALG@line}:}{%
      \setlength{\leftmargin}{1.5em}%
      \setlength{\itemsep}{3pt}\setlength{\parsep}{0pt}%
      \setlength{\topsep}{0pt}\setlength{\partopsep}{0pt}%
      \setlength{\labelwidth}{1.8em}\setlength{\labelsep}{0.5em}%
    }%
  }{\endlist}
  \newcommand{\For}[1]{\item \textbf{for}~##1~\textbf{do}\begin{ALG@forblock}}
  \newcommand{\EndFor}{\end{ALG@forblock}\item \textbf{end for}}
  \makeatother
}

\newcommand{\R}{\mathbb{R}}

\newcommand{\N}{\mathbb{N}}
\newcommand{\C}{\mathbb{C}}
\newcommand{\T}{\mathbb{T}}
\newcommand{\Sone}{S^{1}}
\newcommand{\SO}[1]{\mathrm{SO}(#1)}

\newcommand{\SU}[1]{\mathrm{SU}(#1)}
\newcommand{\Ad}{\mathrm{Ad}}

\newcommand{\Lie}{\mathfrak{g}}
\newcommand{\Ham}{\mathcal{H}}
\newcommand{\val}{V}
\newcommand{\dist}{\mathrm{dist}}
\newcommand{\eps}{\varepsilon}
\newcommand{\inner}[2]{\langle #1,\,#2\rangle}
\newcommand{\norm}[1]{\left\lVert #1\right\rVert}
\newcommand{\Lap}{\Delta}
\DeclareMathOperator{\Tr}{tr}
\DeclareMathOperator{\Lip}{Lip}

\theoremstyle{plain}
\newtheorem{theorem}{Theorem}[section]
\newtheorem{proposition}[theorem]{Proposition}
\newtheorem{lemma}[theorem]{Lemma}
\newtheorem{corollary}[theorem]{Corollary}

\theoremstyle{definition}

\newtheorem{assumption}[theorem]{Assumption}

\theoremstyle{remark}
\newtheorem{remark}[theorem]{Remark}

\crefname{assumption}{Assumption}{Assumptions}
\Crefname{assumption}{Assumption}{Assumptions}
\crefname{definition}{Definition}{Definitions}
\Crefname{definition}{Definition}{Definitions}
\crefname{algorithm}{Algorithm}{Algorithms}
\Crefname{algorithm}{Algorithm}{Algorithms}

\title[Structure-Preserving Spectral Dynamic Programming]
{Structure-Preserving Spectral Dynamic Programming on Compact Lie Groups}

\author{Shanqing Liu}
\address{Division of Applied Mathematics, Brown University}
\email{Shanqing\_Liu@brown.edu} 

\author{Yang Qi}
\address{Inria Saclay Ile-de-France and CMAP, Ecole polytechnique, IP Paris, CNRS}
\email{Yang.Qi@inria.fr}

\subjclass[2020]{49L25, 49L20, 49N35, 65M70, 65M12, 43A77}
\keywords{Hamilton-Jacobi-Bellman equation, optimal control, dynamic programming, Lax-Oleinik semigroup, compact Lie group, Peter-Weyl theorem, viscosity solution, spectral method, vanishing viscosity}

\date{\today}

\begin{document}

\begin{abstract}

We study spectral approximations of the dynamic programming semigroup of a finite-horizon optimal control problem on a connected compact Lie group $G$, and of the associated first-order Hamilton-Jacobi-Bellman equation. 
The Bellman operator is monotone and non-expansive in the supremum norm, while the Peter-Weyl decomposition of $L^{2}(G)$, on which every Fourier method on $G$ rests, is orthogonal, and the mismatch is quantitative. 
The natural sup-norm error recursion of the Galerkin iteration is amplified at every step by the Lebesgue constant of the spectral projection, which grows logarithmically on $S^{1}$ and polynomially on compact Lie groups of rank one, including $\mathrm{SO}(3)$, and in computation the iteration violates elementary bounds within a few steps. 
We restore the dynamic programming structure at the discrete level by replacing the orthogonal projection with spectral filters of Markov type. 
A first, auxiliary route combines a heat-kernel filter with a vanishing-viscosity regularization and yields qualitative convergence in the supremum norm for Lipschitz data. 
The main result is a Fej\'er-type filter on $G$, simultaneously finite-rank, positivity preserving and non-expansive, together with a convergence theorem at the rate $O(\sqrt{\delta}+\sqrt{\eps+(\delta N^{2})^{-1}}\,)$ for Lipschitz data, where $\delta$ is the time step, $N$ the spectral resolution and $\eps$ the viscosity. 
The viscosity may be set to zero, and the coupling $\delta=N^{-1}$ then gives the rate $N^{-1/2}$. 
The proof interprets the filter as a small random perturbation of the controlled dynamics, requires neither a priori regularity of the value function nor a consistency argument in the viscosity sense for the filtering step, and extends to a fully discrete realization based on positive cubature, with exact Wigner transport on $\mathrm{SO}(3)$. 
Numerical experiments are consistent with the predicted rates and with the predicted size of the filter bias, and quantify the frame dependence of the two chart-based baselines tested here.
\end{abstract}

\maketitle

\section{Introduction}

\subsection{Motivation and context}

This paper is concerned with the numerical solution of a deterministic optimal control problem with finite horizon, whose state evolves on a connected compact Lie group $G$. 
By the dynamic programming principle \cite{Bellman1957,FlemingSoner2006,BardiCapuzzoDolcetta1997}, the value function of such a problem is characterized as the unique viscosity solution of a first-order Hamilton-Jacobi-Bellman (HJB) equation on $G$. 
Problems of this type arise in the control of mechanical systems with rotational degrees of freedom, for instance in the attitude control of rigid bodies and spacecraft, where the state space is the rotation group $\SO{3}$~\cite{Jurdjevic1997,BulloLewis2005}, and in the control of closed quantum systems, where the evolution operator moves on the special unitary group $\SU{n}$~\cite{DAlessandro2021}. 
Computing the value function globally on the group, rather than a single optimal trajectory, yields an optimal feedback valid for every initial state, which is the relevant object when many queries must be answered or when a stabilizing control law is sought. 
Working directly on $G$ also avoids the artifacts of local coordinates, such as the singularities of Euler angle charts or the double covering of the quaternion parameterization.

A discretization of such a problem may aim at preserving two structures of a rather different nature. 
The first is geometric. The group is a curved space with a canonical family of symmetries, a problem posed on it has no preferred axes, and a scheme formulated in a chart introduces axes of its own. 
At the level of trajectories this requirement is by now classical, and the Lie-group integrators of geometric numerical integration provide one-step maps that remain on the group exactly \cite{IserlesMuntheKaasNorsettZanna2000,HairerLubichWanner2006,HallLeok2017}. 
The flow maps used in this paper are of that kind. 
The second structure is order-theoretic. 
Under the dynamic programming principle the value functions form a nonlinear semigroup, known for convex Hamiltonians as the Lax-Oleinik semigroup~\cite{Fathi2008}, and every operator of this semigroup is monotone and non-expansive in the supremum norm. 
These two properties are the discrete counterparts of the comparison principle, and they are precisely the hypotheses on which the convergence framework of Barles and Souganidis for approximation schemes of viscosity solutions rests \cite{BarlesSouganidis1991}. 
A discretization that preserves the geometry but destroys the order structure therefore loses the standard route to a convergence proof. 
We are not aware of a previous discretization of HJB equations on a noncommutative compact group that is intrinsic and comes with a stability and convergence theory in the norm in which viscosity solutions are posed and compared. 
Providing one is the purpose of this paper.

The numerical analysis of HJB equations has been developed mostly in the Euclidean setting. Available methods include monotone finite difference schemes~\cite{CrandallLions1984}, for which error bounds in the supremum norm are available in the stationary and parabolic settings~\cite{BarlesJakobsen2005,BarlesJakobsen2007}, semi-Lagrangian schemes based on the discrete dynamic programming principle~\cite{Falcone1987,FalconeFerretti1998,FalconeFerretti2014,DebrabantJakobsen2013}, high-order essentially non-oscillatory schemes~\cite{OsherShu1991}, finite element methods~\cite{jensen2013convergence}, and max-plus methods~\cite{FlemingMcEneaney2000,McEneaney2006,AkianGaubertLakhoua2008}.
A further classical route is the Markov chain approximation method of Kushner and Dupuis~\cite{KushnerDupuis2001}, which replaces the controlled dynamics by a locally consistent controlled Markov chain and thereby inherits the dynamic programming structure by construction.
Quantitative rates for monotone approximations of this type go back to Krylov's method of shaking the coefficients~\cite{Krylov2000} and to the error analysis of Barles and Jakobsen~\cite{BarlesJakobsen2005,BarlesJakobsen2007}.
In all of these works, the general convergence theory requires monotonicity. Monotone schemes, however, are confined to low order, and the constructions that go beyond first order, such as the essentially non-oscillatory schemes, give up the comparison structure together with the guarantees it carries. An exception is the discontinuous Galerkin approximation of uniformly elliptic HJB equations under a Cordes condition \cite{SmearsSuli2014,SmearsSuli2016}, which attains high order with convergence guarantees, a route specific to the second-order setting. 
The efficient numerical solution of HJB equations remains an active field of study. Adaptive and sparse grids mitigate the cost of dynamic programming in higher dimension~\cite{Grune1997,BokanowskiGarckeGriebelKlompmaker2013}, fast-marching and ordered-upwind methods exploit the causality of static first-order equations~\cite{SethianVladimirsky2003,Mirebeau2014,AkianGaubertLiu2024}, anti-dissipative schemes and state-constrained formulations sharpen front propagation and reachability computations~\cite{BokanowskiZidani2007,BokanowskiForcadelZidani2010,MitchellBayenTomlin2005}, and representation formulas of Lax-Oleinik/Hopf and max-plus type lead to methods that attenuate the curse of dimensionality~\cite{DarbonOsher2016,McEneaneyDower2015}. Intrinsic semi-Lagrangian schemes on embedded submanifolds were analyzed recently in \cite{HangelbroekRiegerWright2026}, with uniform convergence rates for linear transport equations with smooth solutions.

Most of these methods are formulated on grids or meshes of $\R^{n}$, and their transplantation to a curved state space proceeds through local charts. This is feasible, and two of the baselines in our experiments are of exactly this kind, but the chart obscures the global structure of the group and leaves a trace in the computed value, which acquires a dependence on coordinate choices that the problem itself does not have. The experiments of \cref{sec:numerics} quantify this effect. 
By contrast, harmonic analysis provides, on every compact group, a canonical global substitute for the trigonometric basis. By the Peter-Weyl theorem, the matrix coefficients of the irreducible unitary representations of $G$ form a complete orthonormal system of $L^{2}(G)$. On $\SO{3}$ this system is the classical Wigner $D$-basis, fast transforms are available~\cite{ShenXuZhang2018}, and Fourier methods built on it are standard tools in the engineering applications of noncommutative harmonic analysis~\cite{ChirikjianKyatkin2016}. It is therefore natural to perform the dynamic programming iteration in the Peter-Weyl coefficient domain, in the spirit of classical spectral methods~\cite{KarniadakisSherwin2005,HesthavenGottliebGottlieb2007,shen2011spectral}.

Galerkin approximations of HJB-type equations have been considered before. The successive Galerkin approximation~\cite{BeardSaridisWen1997} was applied to optimal attitude control by Lawton and Beard~\cite{LawtonBeard1999}, and a successive Wigner-Galerkin method, which represents the iterates of a policy iteration in the Wigner $D$-basis, was proposed for stochastic kinematic control on $\SO{3}$~\cite{WangWangSolo2024}. These works approximate the equation in the $L^{2}$ sense of Galerkin methods and do not address the stability of the dynamic programming iteration in the supremum norm. In the commutative case, spectral viscosity approximations of Hamilton-Jacobi equations on the torus were shown to converge to the viscosity solution by Lepsky~\cite{Lepsky2000}, in a method-of-lines setting. Max-plus basis methods~\cite{FlemingMcEneaney2000,McEneaney2006,AkianGaubertLakhoua2008} take the opposite route. They expand the value function in bases adapted to the min-plus linearity of the dynamic programming semigroup, so they preserve the order structure by construction. The method proposed here keeps both ingredients, the group basis for the transport and a Markov structure for the nonlinear iteration.

Our first result is negative. 
The Bellman operator inherits from the dynamic programming principle the two properties on which the convergence theory ultimately rests, namely monotonicity and non-expansiveness in $L^{\infty}$.
The orthogonal Peter-Weyl projection, by contrast, is non-expansive in $L^{2}$ but possesses neither property, and its $L^{\infty}$ operator norm, the Lebesgue constant of the truncation, is unbounded. 
The natural sup-norm error recursion for the iterated projection and Bellman steps therefore carries, at each step, a factor that grows like $\log N$ on $S^{1}$ and like $N^{(d-1)/2}$ on compact Lie groups of rank one and dimension $d\ge2$, a class that contains $\SO{3}$. 
We do not claim that no norm exists in which both operators are simultaneously well behaved, but the natural $L^{2}/L^{\infty}$ framework, in which each operator is non-expansive in its own norm, cannot be closed without this amplification.
The obstruction is not an artifact of the analysis. In the experiments of \cref{sec:numerics}, the Galerkin iteration violates elementary bounds satisfied by the exact discrete value function within the first few steps, at every resolution tested.

We remove this obstruction by restoring the dynamic programming structure at the discrete level. 
We replace the orthogonal projection by a spectral filter that is bounded on $L^{\infty}$ uniformly in the truncation, a classical stabilization device in spectral methods \cite{GottliebShu1997,HesthavenGottliebGottlieb2007,TadmorTanner2002,Tanner2006}, and that, in the form used for our main results, is a Markov operator, namely the convolution with a probability density on the group. 
Every step of the filtered iteration is then again monotone and sup-norm non-expansive. 
A vanishing-viscosity regularization of the equation is incorporated in the analysis and provides a further smoothing mechanism. 
The convergence proof for the Markov scheme does not proceed through consistency in the viscosity sense. 
A filtered Bellman step is the exact dynamic programming operator of a controlled Markov chain in which the deterministic transport is followed by a small centred random jump, so the filtered scheme belongs to the family of Markov chain approximations of Kushner and Dupuis~\cite{KushnerDupuis2001}. Their convergence theory is qualitative, whereas a coupling of the perturbed chain with the noise-free one yields explicit rates under Lipschitz data alone.

\subsection{Contribution}

We begin by formulating the semi-Lagrangian dynamic programming iteration in the Peter-Weyl coefficient domain of a general connected compact Lie group. Its Galerkin implementation is unstable, the error recursion being governed by the Lebesgue constant of the projection (\cref{lem:norm_mismatch} and \cref{thm:sup_norm}). A sup-norm rate is available only under a uniform Sobolev bound that fails for generic nonsmooth value functions in dimension $d\ge 3$ (\cref{ssec:Hs_discussion}).

We then analyze a filtered viscous variant of the scheme, in which the equation is regularized by a vanishing-viscosity term and the orthogonal projection is replaced by the heat-kernel filter, a Markov operator. For this scheme we prove convergence in the supremum norm to the viscosity solution for Lipschitz data, in dimension $d\le 3$ when the viscous step is an implicit resolvent and in every dimension when it is the heat semigroup (\cref{thm:lowreg}), together with a finite-rank realization based on a side truncation of the filter and a logarithmic oversampling rule (\cref{prop:finite_rank_heat,cor:lowreg_finite_rank}). In the second case the smoothing of the viscous step itself supplies the regularity that the data lack. Along the way we establish the Crandall-Lions vanishing-viscosity estimate on a compact Lie group (\cref{thm:VV}), for which we are not aware of a reference.

For the main result of the paper we construct a spectral filter that is finite-rank, positivity preserving, and non-expansive in the supremum norm (\cref{thm:finite_rank_markov_filter}). It is a Fej\'er-type kernel built from a smooth spectral window, and its construction rests on a product property of the Casimir spectral subspaces of $G$ (\cref{lem:spectral_product}). For the resulting Fej\'er-Markov scheme we prove a convergence theorem with an explicit rate, under Assumption~\ref{ass:standing} alone (\cref{thm:finite_rank_lipschitz_rate}). The argument views the filtering step as a small centred random perturbation of the controlled dynamics and compares the perturbed and unperturbed chains by a coupling. The artificial viscosity may be set to zero (\cref{cor:canonical_scaling}). The same coupling covers a fully discrete realization based on positive cubature, including the control discretization and the Lie-group flow integration (\cref{thm:fully_discrete_convergence}), so the theorem applies to the implemented iteration. In informal terms, the main result reads as follows.

\begin{theorem}[informal version of \cref{thm:finite_rank_lipschitz_rate}, \cref{cor:canonical_scaling}, and \cref{thm:fully_discrete_convergence}]
\label{thm:informal}
Assume that the dynamics and the costs are bounded and Lipschitz on $G$ (\cref{ass:standing}). Let $\delta=T/K$ be the time step, let $N$ be the spectral resolution, let $\eps\ge0$ be the artificial viscosity, and suppose that $\nu=\eps+(\delta N^{2})^{-1}\le1$. Then the iterates $U_{k}$ of the Fej\'er-Markov scheme satisfy
\[
\max_{0\le k\le K}\,\bigl\lVert U_{k}-\val(\cdot,t_{k})\bigr\rVert_{L^{\infty}(G)}
\le C_{T}\bigl(\sqrt{\delta}+\sqrt{\nu}\,\bigr),
\]
where $t_{k}=k\delta$ and $C_{T}$ depends only on the data, the horizon $T$, and the geometry of $G$. The viscosity may be set to zero, and the canonical coupling $\delta=N^{-1}$ gives the rate $C_{T}N^{-1/2}$. The estimate persists for the fully discrete realization, up to two additional terms accounting for the discretization of the control set and for the accuracy of the Lie-group integrator.
\end{theorem}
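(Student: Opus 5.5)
The plan is to realize each step of the Fej\'er-Markov iteration as the exact dynamic programming operator of a controlled Markov chain on $G$, and to compare this chain with the deterministic controlled flow by a coupling. I would write one step as $U_{k+1}=F_N P_\eps S_\delta U_k$. Here $S_\delta$ is the semi-Lagrangian Bellman step $\phi\mapsto\min_a\{\delta\,\ell(\cdot,a)+\phi(\Phi^a_\delta(\cdot))\}$, $P_\eps$ is the heat semigroup $e^{\eps\delta\Lap}$ (the identity when $\eps=0$), and $F_N$ is convolution with the Fej\'er-type probability density $\kappa_N$ of \cref{thm:finite_rank_markov_filter}. Each factor is monotone and sup-norm non-expansive. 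Since $F_N$ and $P_\eps$ are convolutions with probability measures on $G$, the composite is the Bellman operator of a chain $X_{k+1}=\Phi^{a_k}_\delta(X_k)\,\xi_{k+1}$. The increments $\xi_{k+1}$ are i.i.d.\ with law $\kappa_N * p_{\eps\delta}$, and the control is chosen in feedback form. Unrolling the recursion gives $U_k(x)=\inf_{\text{policies}}\mathbb{E}[\sum_j\delta\,\ell(X_j,a_j)+g(X_{K})]$, with the appropriate time orientation.

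The second step is the one-step moment bound on the noise. I would establish $\mathbb{E}[\dist(e,\xi)^2]\le C(\eps\delta+N^{-2})$. For the heat kernel this follows from $\Lap\,\dist(e,\cdot)^2\le C$ near $e$. For the Fej\'er kernel it follows from the decay of the smooth spectral window: the second moment of $\kappa_N$ is controlled by $\int\dist^2\kappa_N$, and the smoothness of the window makes this of order $N^{-2}$. This computation is the main obstacle. I would attack it by expanding $\dist(e,g)^2$, or a smooth substitute comparable to it such as $\sum(1-\mathrm{Re}\,\chi_\lambda/d_\lambda)$ over the fundamental representations, in characters. Its pairing with $\kappa_N$ then involves only the window values at the lowest Casimir eigenvalues, and $1-\hat w(\lambda/N)=O(N^{-2})$ there because the window is flat to second order at zero.

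The third step is the coupling, which uses Lipschitz data only. I would take one fixed (near-)optimal control sequence for the deterministic discrete problem and run both the chain and the deterministic one-step iteration under it, and then do the same in the reverse direction. Let $D_k=\dist(X_k,Y_k)$, where $Y_k$ is the deterministic iterate. Lipschitz continuity of the flow gives $\dist(\Phi^a_\delta x,\Phi^a_\delta y)\le e^{L\delta}\dist(x,y)$. Bi-invariance of the metric and the noise then give $D_{k+1}\le e^{L\delta}D_k+\dist(e,\xi_{k+1})$. The naive summation of this gives a bound linear in $k$, which is too weak. The square-root rate comes from centredness: working with $\mathbb{E}D_k^2$ in a normal neighbourhood, the cross term $\mathbb{E}\langle\cdot,\log\xi\rangle$ is $O(\mathbb{E}|\xi|^2)$, because the window is symmetric and $\kappa_N$ is a class function. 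This yields $\mathbb{E}D_K^2\le C_T K(\eps\delta+N^{-2})=C_T(\eps+(\delta N^2)^{-1})=C_T\nu$. Hence $|U_k-\tilde U_k|\le C_T\sqrt\nu$, where $\tilde U$ is the noise-free semi-Lagrangian iterate. The curvature corrections in the normal-coordinate estimate are of higher order as long as $\nu\le1$.

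Finally, I would invoke the classical estimate $\|\tilde U_k-\val(\cdot,t_k)\|_\infty\le C_T\sqrt\delta$ for the semi-Lagrangian scheme under \cref{ass:standing}. This is a standard Gr\"onwall comparison of Euler-type discrete trajectories with continuous ones, and it transfers verbatim to $G$ using a Lie-group integrator. The triangle inequality then gives the stated rate. Setting $\eps=0$ and $\delta=N^{-1}$ gives $\nu=N^{-1}$ and the rate $N^{-1/2}$. For the fully discrete version, replacing the convolution by positive cubature keeps the chain Markov, so the same coupling applies. Control discretization and integrator error enter as additive perturbations of $S_\delta$, and non-expansiveness lets them accumulate to $K$ times the per-step error, which produces the two extra terms in the statement.
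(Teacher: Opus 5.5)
Your overall strategy is the paper's. You read the filtered Bellman step as the dynamic programming operator of a controlled chain, couple it with the noise-free chain under the same realized controls, let centredness turn $K$ one-step perturbations into a quadratic-mean sum of size $K(\delta\eps+N^{-2})=T\nu$, and add \cref{thm:DPP_convergence}. The gaps are in the steps that carry the analysis.

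First, your derivation of the second moment of $F_N$ does not work. Pairing $F_N$ with $1-\mathrm{Re}\,\chi_\lambda/d_\lambda$ gives $1-\mathrm{Re}\,m_N(\lambda)$. Here $m_N$ is the multiplier \eqref{eq:positive_filter_multiplier} of $F_N=|D_N|^2/Z_N$, a sum over all pairs $\pi,\rho$ in the band $\sqrt{c}\le 2N$ weighted by tensor-product multiplicities. It is not a value of the window. The window satisfies $\vartheta(\sqrt{c_\lambda}/N)=1$ exactly at low modes. Moreover, a probability measure whose multiplier is $1$ on every constituent of a faithful representation must be $\delta_e$, which is exactly why $D_N$ itself is not positive. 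So flatness of $\vartheta$ at $0$ says nothing about $1-m_N(\lambda)$. The factor $N^{-2}$ comes from the spatial concentration of $D_N$ at scale $1/N$, that is, from \eqref{eq:DN_localization} together with $Z_N\asymp N^d$. This is how \eqref{eq:FN_moments} is proved, and since it is part of \cref{thm:finite_rank_markov_filter}, you should simply cite it.

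Second, your recursion for $\mathbb E D_k^2$ ``in a normal neighbourhood'' is not closed. $\dist_G^2$ is smooth only near the diagonal, but the coupling only gives $\mathbb E D_k^2\lesssim\nu T$, so the chains can be $O(1)$ apart when $\nu$ is of order $1$. Also, $F_N$ is not compactly supported. The hypothesis $\nu\le1$ therefore does not confine you to a normal neighbourhood, and the expectation must be controlled on the complementary event too. The missing device is the paper's faithful unitary representation $\rho$ with the Frobenius norm \eqref{eq:faithful_embedding_metric}. There the expansion of $\|\rho(X_{n+1})-\rho(\overline X_{n+1})\|_{\mathrm F}^2$ is exact and global. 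Centredness becomes the identity $I_r-A_\rho=\frac12\int_G(\rho(z)-I_r)^*(\rho(z)-I_r)\,q_{\delta,N}^{\eps}(z)\,\mathrm d\mu(z)$, so the drift is $O(a)$ rather than zero. The resulting $a^2/\delta$ per step is the origin of the $+\nu$ in \cref{lem:controlled_small_noise}.

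For the convolution scheme alone you could instead repair your intrinsic argument. Use the global semiconcavity of $\dist_G^2$ along the geodesics $s\mapsto x\exp(sY)$ (the bi-invariant metric has nonnegative curvature) together with an odd minimal choice of logarithm; this even removes the $+\nu$. This repair does not reach the fully discrete scheme, however. The cubature kernel $P_j(g)=w_jq_{\delta,N}^{\eps}(g^{-1}g_j)$ is neither a right convolution nor inversion symmetric, so ``the chain stays Markov'' does not give centredness. The paper recovers \eqref{eq:discrete_variance}--\eqref{eq:discrete_drift} from cubature exactness. This applies because $q_{\delta,N}^{\eps}(g^{-1}\cdot)$ times matrix coefficients of $\rho$, and times their products, is band-limited by \cref{lem:spectral_product}. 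A logarithm is not band-limited, so exactness cannot be invoked for it.

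Likewise, the control and integrator errors must be accumulated along the deterministic values, as in \cref{lem:control_flow_perturbation}, where $\Lip(V_\delta)$ is uniform. Inside the cubature iteration no uniform Lipschitz bound on $\widehat Z_k$ is available.

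Finally, the $\sqrt\delta$ estimate is not a Gr\"onwall trajectory comparison. Freezing controls on each step gives only $V_\delta\ge V-C\delta$. The reverse inequality, where measurable controls must be approximated by stepwise constant ones, requires the doubling-of-variables argument of \cref{app:proofs}.
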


The paper closes with the realization on $\SO{3}$, where the transport acts exactly on the Wigner coefficients and the pointwise minimization is performed at the nodes of a positive cubature, so the implemented iteration is the one covered by \cref{thm:informal}. The experiments compare the schemes with a closed-form solution and with an independent upper-bound reference. The measured orders at the canonical coupling are $0.55$ and $0.61$ against the predicted $1/2$.



\subsection{Organization}

The paper is organized as follows. \Cref{sec:preliminaries} recalls the background on compact Lie groups, the optimal control problem, and the dynamic programming principle. 
\Cref{sec:specapprox} introduces the spectral Bellman scheme, establishes the norm-mismatch obstruction, and proves a conditional convergence result under a uniform Sobolev hypothesis. \Cref{sec:stab} develops the deterministic route, that is the vanishing-viscosity regularization and the intrinsic Crandall-Lions estimate (\cref{ssec:viscous_HJB}), the IMEX operators (\cref{ssec:Veps_delta}), and the filtered schemes with their convergence under minimal regularity (\cref{ssec:filter,ssec:lowreg}). 
\Cref{sec:markov} contains the main results of the paper, that is the finite-rank Markov filter (\cref{ssec:markov_filter}), the convergence theorem (\cref{ssec:markov_convergence}), and the fully discrete monotone realization (\cref{ssec:fully_discrete_markov}). 
\Cref{sec:numerics} reports the numerical experiments on $\SO{3}$. 

\section{Preliminaries}
\label{sec:preliminaries}

\subsection{Compact Lie groups and the Casimir Laplacian}
\label{ssec:Lie}

Throughout the paper, $G$ denotes a connected compact Lie group of dimension $d$, with identity $e$ and Lie algebra $\Lie:=T_{e}G$.
We write $\Ad:G\to\mathrm{Aut}(\Lie)$ for the adjoint representation, $L_{g}:h\mapsto gh$ for left translation, and $TL_{g}:T_{h}G\to T_{gh}G$ for its differential.
Recall that the exponential map $\exp:\Lie\to G$ is surjective by compactness of $G$
\cite{Knapp2002}.

We fix an $\Ad$-invariant inner product $\inner{\cdot}{\cdot}_{\Lie}$ on $\Lie$.
Such an inner product exists by compactness of $G$ \cite{Knapp2002}.
Moreover, it is unique up to a positive scalar on each simple ideal of $\Lie$, while on the abelian centre every inner product is $\Ad$-invariant.
Its left translation produces a bi-invariant Riemannian metric on $G$ \cite{Helgason2001}, whose associated geodesic distance, gradient, divergence and Haar probability measure on $G$ will be denoted by $\dist_{G}:G\times G\to[0,\infty)$, $\nabla_{G}$, $\mathrm{div}_{G}$, and $\mu$ (so that $\mu(G)=1$), respectively.
Under the left trivialization $TG\cong G\times\Lie$, $(g,X_{g})\mapsto(g,(TL_{g})^{-1}X_{g})$, a vector field $g\mapsto X_{g}$ corresponds to a map $\xi:G\to\Lie$ via $X_{g}=(TL_{g})\xi(g)$.
Combined with the identification $\Lie\cong\Lie^{*}\cong T_{g}^{*}G$ given by the $\Ad$-invariant inner product, the left trivialization defines the \emph{body-frame gradient}
$\nabla_{G}^{\mathrm{body}}:C^{1}(G)\to C(G;\Lie)$,
\begin{equation}
\inner{\nabla_{G}^{\mathrm{body}}f(g)}{Y}_{\Lie}=df_{g}\bigl((TL_{g})Y\bigr), \quad\forall\,g\in G,\ Y\in\Lie.
\label{eq:body_gradient}
\end{equation}
Equivalently, if $\{X_{j}\}_{j=1}^{d}$ is an $\inner{\cdot}{\cdot}_{\Lie}$-orthonormal basis of $\Lie$ with associated left-invariant vector fields $\widetilde X_{j}$ on $G$, then $\nabla_{G}^{\mathrm{body}}f(g)=\sum_{j=1}^{d}(\widetilde X_{j}f)(g)\,X_{j}\in\Lie$. The Riemannian gradient $\nabla_{G}$ and the body-frame gradient $\nabla_{G}^{\mathrm{body}}$ are related by $\nabla_{G}f(g)=(TL_{g})\nabla_{G}^{\mathrm{body}}f(g)\in T_{g}G$.

The Laplace-Beltrami operator on $G$, $\Lap_{G}=\mathrm{div}_{G}\nabla_{G}$, which is an operator on
$C^{\infty}(G)$, coincides with the (negative) Casimir element of the universal enveloping algebra $\mathcal{U}(\Lie)$, acting on $L^{2}(G)$ through the left regular representation.
Indeed, if $\{X_{1},\dots,X_{d}\}$ is an $\inner{\cdot}{\cdot}_{\Lie}$-orthonormal basis of $\Lie$ and $\widetilde X_{j}$ denotes the associated left-invariant vector field on $G$, then $\Lap_{G}=\sum_{j=1}^{d}\widetilde X_{j}^{2}$. It is known that $-\Lap_{G}$ is non-negative self-adjoint on $L^{2}(G,\mu)$ with discrete spectrum tending to infinity.

For $1\le p\le\infty$ and $k\ge 0$, the Lebesgue, continuous, and Sobolev spaces $L^{p}(G)$, $C^{k}(G)$ and $W^{k,p}(G)$ are defined in the usual way. The space of Lipschitz functions on $(G,\dist_{G})$ is denoted by $\Lip(G)$, with Lipschitz constant $\Lip(f)=\sup_{g\ne h}|f(g)-f(h)|/\dist_{G}(g,h)$. For $s\ge 0$, we define the Sobolev space $H^{s}(G)$ by means of the spectral calculus of $-\Lap_{G}$:
\begin{equation}
\begin{aligned}
H^{s}(G)&:=\bigl\{f\in L^{2}(G):(I-\Lap_{G})^{s/2}f\in L^{2}(G)\bigr\} \ ,\\
\norm{f}_{H^{s}(G)}&:=\bigl\lVert(I-\Lap_{G})^{s/2}f\bigr\rVert_{L^{2}(G)}.
\end{aligned}
\label{eq:Hs_def}
\end{equation}
That \eqref{eq:Hs_def} agrees with the integer-order definition through left-invariant derivatives (for $s\in\N$) and with the Slobodeckij seminorms (for non-integer $s$) is standard (see, e.g., \cite{Triebel2010,brezis2011functional}). We shall make repeated use of the Sobolev embedding $H^{s}(G)\hookrightarrow C^{k}(G)$, valid for $s>d/2+k$.

\subsection{Optimal control problem and Hamilton-Jacobi-Bellman equation}
\label{ssec:OCP}

We consider a deterministic optimal control problem with finite horizon $T$ and a nonempty compact control set $U\subset\R^{m}$. The data of the optimal control problem consists of a controlled vector field (dynamics) $F:G\times U\to TG$, with $F(g,u)\in T_{g}G$, written in body-frame coordinates as $F(g,u)=(TL_{g})\xi(g,u)$ for some map $\xi:G\times U\to\Lie$, a running cost $\ell:G\times U\to\R$ and a terminal cost $\phi:G\to\R$.
Throughout this paper, we make the following regularity assumptions on the data.

\begin{assumption}
\label{ass:standing}
\begin{itemize}
\item[i.] $U$ is non-empty and compact;
\item[ii.] $\xi\in C(G\times U;\Lie)$,
$\ell\in C(G\times U;\R)$, $\phi\in C(G;\R)$ are bounded;
\item[iii.] there
exist constants $L_{\xi},L_{\ell},L_{\phi}$ such that
\begin{equation}
	\left\{
	\begin{aligned}
&\;\norm{\xi(g_{1},u)-\xi(g_{2},u)}_{\Lie}\le L_{\xi}\dist_{G}(g_{1},g_{2}),\\
&\;|\ell(g_{1},u)-\ell(g_{2},u)|\le L_{\ell}\dist_{G}(g_{1},g_{2}), \\
&\;|\phi(g_{1})-\phi(g_{2})|\le L_{\phi}\dist_{G}(g_{1},g_{2}),
\end{aligned}
\right.
\label{eq:Lip_data}
\end{equation}
 for
all $g_{1},g_{2}\in G$, $u\in U$.
\end{itemize}
\end{assumption}

For any measurable control process $u:[t,T]\to U$ and any initial datum $(g,t)\in G\times[0,T)$, under \cref{ass:standing} the Cauchy problem
\[\dot\zeta(s)=F(\zeta(s),u(s)), \ s\in[t,T] \ ,\]
admits a unique absolutely continuous solution, with initial condition $\zeta(t)=g$,  $\zeta=\zeta(\cdot;g,t,u(\cdot)):[t,T]\to G$ (see, e.g., \cite{BardiCapuzzoDolcetta1997}). We denote by $\mathcal{U}_{t}$ the set of all admissible controls $u(\cdot)$ on $[t,T]$, and define the \emph{value function} by
\begin{equation}
\val(g,t):=\inf_{u(\cdot)\in\mathcal{U}_{t}}\biggl\{\int_{t}^{T}\ell\bigl(\zeta(s),u(s)\bigr)\,\mathrm{d}s + \phi\bigl(\zeta(T)\bigr)\biggr\} \ ,
\label{eq:value_function} 
\end{equation}
with the terminal condition $\val(g,T):=\phi(g)$.
Under Assumption~\ref{ass:standing}, $\val(\cdot,t)\in\Lip(G)$ uniformly in $t\in[0,T]$, and $t\mapsto\val(g,t)$ is Lipschitz on $[0,T]$ uniformly in $g$. Here, the boundedness of $\xi$ and $\ell$ is used for the time regularity (see, e.g., \cite{BardiCapuzzoDolcetta1997}).

Let us define the associated Hamiltonian
\begin{equation}
\Ham(g,p):=\sup_{u\in U}\Bigl\{-\inner{p}{\xi(g,u)}_{\Lie}-\ell(g,u)\Bigr\},\quad g\in G,\ p\in\Lie\cong T_{g}^{*}G,
\label{eq:Hamiltonian}
\end{equation}
where the identification $\Lie\cong T_{g}^{*}G$ is made via the bi-invariant inner product. It follows from Assumption~\ref{ass:standing} that $\Ham$ is continuous on $G\times\Lie$ and Lipschitz in the
covariable $p$, uniformly in $g$.

\begin{theorem}[see, e.g., \cite{CrandallIshiiLions1992,BardiCapuzzoDolcetta1997,AzagraFerreraLopezMesas2005}]
\label{thm:HJB}
Under Assumption~\ref{ass:standing}, the value function $\val$ is the unique
viscosity solution of the terminal-value HJB equation
\begin{equation}
	\left\{
	\begin{aligned}
&\;-\partial_{t}\val(g,t)+\Ham\bigl(g,\nabla_{G}^{\mathrm{body}}\val(g,t)\bigr)=0,
\quad (g,t)\in G\times(0,T),\\
 &\;\val(g,T)=\phi(g), \quad g \in G \ .
 \end{aligned}
 \right.
\label{eq:HJB}
\end{equation}
\end{theorem}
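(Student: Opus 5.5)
The plan is to prove the two halves of \cref{thm:HJB} separately: first that $\val$ is a viscosity solution of \eqref{eq:HJB}, then that this solution is unique. For the first half, we use that \cref{ass:standing} makes $\val$ continuous on $G\times[0,T]$: it is Lipschitz in $g$ uniformly in $t$, and Lipschitz in $t$ uniformly in $g$. The key tool is the dynamic programming principle, which for $t<t+h\le T$ reads
\[
\val(g,t)=\inf_{u(\cdot)}\Bigl\{\int_{t}^{t+h}\ell(\zeta(s),u(s))\,\mathrm{d}s+\val(\zeta(t+h),t+h)\Bigr\}.
\]
It follows from the semigroup property of the flow and from concatenation of admissible controls. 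Since $G$ is compact, the flow is global and no exit-time issues arise.

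\textbf{Subsolution property.} Take a $C^{1}$ test function $\psi$ such that $\val-\psi$ has a local maximum at $(g_{0},t_{0})$. Insert a constant control $u$ in the dynamic programming principle, which gives $\val(g_{0},t_{0})\le \int_{t_{0}}^{t_{0}+h}\ell\,\mathrm{d}s+\val(\zeta(t_{0}+h),t_{0}+h)$. Replace $\val$ by $\psi$ using the maximum property, divide by $h$, and let $h\to0$. Along the trajectory, $\frac{d}{ds}\psi(\zeta(s),s)=\partial_{t}\psi+\inner{\nabla_{G}^{\mathrm{body}}\psi}{\xi(\zeta,u)}_{\Lie}$, because $\dot\zeta=(TL_{\zeta})\xi$ and the body gradient is defined by \eqref{eq:body_gradient}. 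This yields $0\le\ell(g_{0},u)+\partial_{t}\psi+\inner{\nabla^{\mathrm{body}}_{G}\psi}{\xi(g_{0},u)}_{\Lie}$ for every $u\in U$. Taking the supremum over $u$ in \eqref{eq:Hamiltonian} gives $-\partial_{t}\psi+\Ham(g_{0},\nabla_{G}^{\mathrm{body}}\psi)\le0$.

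\textbf{Supersolution property.} Here a constant control is not available, so we argue by contradiction. Suppose the inequality fails strictly at a local minimum $(g_{0},t_{0})$ of $\val-\psi$. By continuity of $\Ham$ and of $\nabla^{\mathrm{body}}_{G}\psi$, together with uniform continuity of $\xi$ and $\ell$ on the compact set $G\times U$, there is $\theta>0$ such that $\ell+\partial_{t}\psi+\inner{\nabla^{\mathrm{body}}\psi}{\xi}\ge\theta$ for all $u\in U$ on a small neighbourhood of $(g_{0},t_{0})$. Trajectories stay in this neighbourhood for short times, uniformly in the control, because $\xi$ is bounded. Take an $\theta h/2$-optimal control in the dynamic programming principle and integrate this inequality along its trajectory. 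This contradicts the minimum property of $\val-\psi$.

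\textbf{Uniqueness.} Uniqueness is the main obstacle. We would prove a comparison principle for bounded continuous sub- and supersolutions of \eqref{eq:HJB} on the compact manifold $G$, by doubling variables. First, reduce to a strict subsolution by replacing $w$ with $w-\eta/t$, which is standard. Then consider
\[
\Phi(g,h,t,s)=w(g,t)-v(h,s)-\frac{\dist_{G}(g,h)^{2}}{2\alpha}-\frac{(t-s)^{2}}{2\alpha}.
\]
The difficulty is that $\dist_{G}^{2}$ is not smooth at the cut locus. Near the diagonal, however, it is smooth, and since maximizers satisfy $\dist_{G}(g_{\alpha},h_{\alpha})\to0$ we only need the local picture. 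One way to exploit this is to replace $\dist_{G}^{2}$ by a smooth penalty that agrees with it near the diagonal, for instance $\|\log(h^{-1}g)\|_{\Lie}^{2}$ cut off away from the identity. The other way is to appeal to the viscosity theory on Riemannian manifolds of \cite{AzagraFerreraLopezMesas2005}.

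Near the diagonal, the body-frame gradients of the penalty in $g$ and in $h$ are related by parallel transport. Bi-invariance keeps the resulting mismatch of size $O(\dist_{G})$. Combined with the Lipschitz continuity of $\xi$ and $\ell$ in $g$, the Hamiltonian difference is bounded by $C(L_{\xi}\dist^{2}/\alpha+L_{\ell}\dist)\to0$. This is the standard Crandall--Lions argument, and it forces $w\le v$. Since $\val$ attains the terminal datum $\phi$ continuously, comparison gives uniqueness. The delicate step is controlling the non-smoothness of $\dist_{G}^{2}$ and the frame mismatch between the body gradients at $g$ and at $h$; for this I would rely on \cite{AzagraFerreraLopezMesas2005} and \cite{CrandallIshiiLions1992} rather than redo it in full.
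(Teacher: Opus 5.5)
Your proposal is correct and follows the standard route the paper relies on. The paper only cites the literature for this theorem, but its appendices use exactly your ingredients: the contact-point sub- and supersolution inequalities obtained from the dynamic programming principle with constant and near-optimal controls (proof of \cref{thm:DPP_convergence}), and doubling of variables with $\dist_{G}^{2}/(2\alpha)$ on the smooth diagonal neighbourhood (\cref{app:VV_proof}). One simplification for the step you flag as delicate: on a bi-invariant metric the body covectors generated by the penalization at $g$ and at $h$ coincide exactly, as one sees by differentiating $\dist_{G}(g\exp(sY),h\exp(sY))=\dist_{G}(g,h)$ at $s=0$ (cf.\ \eqref{eq:VV_body_covectors_equal}), so the frame mismatch is identically zero and only the $g$-modulus of $\Ham$ enters.
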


The notion of viscosity solution employed here is the standard one of~\cite{CrandallLions1983}. It carries over from $\R^{n}$ to a compact Riemannian manifold without modification. The continuous dynamic programming principle
\[\val(g,t)=\inf_{u(\cdot)}\bigl\{\int_{t}^{s}\ell (\zeta(\tau),u(\tau))\,\mathrm{d}\tau+\val(\zeta(s),s)\bigr\}\]
holds for $0\le t\le s\le T$ in the same generality.

Since the data $(\xi,\ell)$ are time-independent, the dynamic programming principle can be rewritten in evolutionary form. For $\tau\ge0$ and $\psi\in C(G)$, let $S_{\tau}\psi$ denote the value function of the problem \eqref{eq:value_function} posed on the horizon $\tau$ with terminal cost $\psi$, that is,
\begin{equation}
(S_{\tau}\psi)(g):=\inf_{u(\cdot)}\Bigl\{\int_{0}^{\tau}\ell\bigl(\zeta(s),u(s)\bigr)\,\mathrm{d}s+\psi\bigl(\zeta(\tau)\bigr)\Bigr\},
\quad \zeta(0)=g,
\label{eq:DP_semigroup}
\end{equation}
the infimum being taken over all measurable controls $u:[0,\tau]\to U$, so that $\val(\cdot,t)=S_{T-t}\phi$ for all $t\in[0,T]$. 
The dynamic programming principle implies that the family $(S_{\tau})_{\tau\ge0}$ is a one-parameter semigroup of nonlinear operators on $C(G)$, namely $S_{\tau+\sigma}=S_{\tau}\circ S_{\sigma}$ and $S_{0}=I$. We call $(S_{\tau})_{\tau\ge0}$ the \emph{dynamic programming semigroup} of the control problem. It is also known as the \emph{Lax-Oleinik semigroup}, a name most common in weak KAM theory \cite{Fathi2008}, where the Hamiltonian is additionally strictly convex and superlinear in $p$.  
 Each operator $S_{\tau}$ is monotone, additively homogeneous, and non-expansive in the supremum norm.  Equivalently, the semigroup is linear over the min-plus semiring, which is the point of view of max-plus based numerical methods \cite{McEneaney2006,AkianGaubertLakhoua2008}. The object approximated in this paper is this evolution semigroup. 

\subsection{Lie-group integrator and discrete dynamic programming}
\label{ssec:DPP_discrete}

For each constant control $u\in U$ and each $\delta\ge 0$, we denote by $\Phi^{u}_{\delta}:G\to G$ the time-$\delta$ flow of the autonomous vector field $g\mapsto F(g,u)$.
Let $K\in\N$, $\delta=T/K$ and $t_{k}=k\delta$, $k=0,\dots,K$.
The discrete Bellman operator $\mathcal{T}_{\delta}:C(G)\to C(G)$ is defined by
\begin{equation}
(\mathcal{T}_{\delta}\val)(g):=\inf_{u\in U}\Bigl\{\delta\,\ell(g,u)+\val\bigl(\Phi^{u}_{\delta}(g)\bigr)\Bigr\}.
\label{eq:Bellman}
\end{equation}
The operator $\mathcal{T}_{\delta}$ is the one-step semi-Lagrangian approximation of the dynamic programming semigroup \eqref{eq:DP_semigroup}. It is obtained from $S_{\delta}$ by restricting the controls to constants on the step and freezing the running cost at its initial value, and it inherits the structural properties of $S_{\delta}$.

\begin{proposition}
\label{prop:Tdelta_properties}
Under Assumption~\ref{ass:standing}, the operator $\mathcal{T}_{\delta}$ of
\eqref{eq:Bellman} is monotone, additively homogeneous, that is  $\mathcal{T}_{\delta}(\lambda+f)=\lambda+\mathcal{T}_{\delta}f$ for every $\lambda\in\R$, sup-norm non-expansive,
that is $\norm{\mathcal{T}_{\delta}f-\mathcal{T}_{\delta}g}_{\infty}\le\norm{f-g}_{\infty}$,
and Lipschitz-preserving with
$\Lip(\mathcal{T}_{\delta}f)\le e^{L_{\xi}\delta}(L_{\ell}\delta+\Lip(f))$
for every $f\in\Lip(G)$.
\end{proposition}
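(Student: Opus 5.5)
The first three properties follow directly from the inf-over-affine structure of \eqref{eq:Bellman}; only the Lipschitz bound needs a flow estimate. For \emph{monotonicity}, fix $g\in G$ and $u\in U$. If $f\le h$ pointwise, then $\delta\ell(g,u)+f(\Phi^{u}_{\delta}(g))\le\delta\ell(g,u)+h(\Phi^{u}_{\delta}(g))$, and taking the infimum over $u$ gives $\mathcal{T}_{\delta}f\le\mathcal{T}_{\delta}h$. \emph{Additive homogeneity} holds because the constant $\lambda$ factors out of the infimum. \emph{Non-expansiveness} then follows by the standard argument: since $f\le h+\norm{f-h}_{\infty}$, monotonicity and homogeneity give $\mathcal{T}_{\delta}f\le\mathcal{T}_{\delta}h+\norm{f-h}_{\infty}$; exchanging $f$ and $h$ gives the reverse inequality. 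Along the way we check that $\mathcal{T}_{\delta}$ maps $C(G)$ into $C(G)$. The map $(g,u)\mapsto\delta\ell(g,u)+f(\Phi^{u}_{\delta}(g))$ is jointly continuous, because the flow depends continuously on $(g,u)$ by Gronwall and the continuity of $\xi$. An infimum over the compact set $U$ of a jointly continuous function is continuous in $g$.

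The \emph{Lipschitz estimate} rests on a flow bound,
\[
\dist_{G}\bigl(\Phi^{u}_{\delta}(g_{1}),\Phi^{u}_{\delta}(g_{2})\bigr)\le e^{L_{\xi}\delta}\dist_{G}(g_{1},g_{2}),
\]
uniformly in $u$. Granting it, fix $g_{1},g_{2}$ and $u$. Then
\[
\bigl|\delta\ell(g_{1},u)+f(\Phi^{u}_{\delta}(g_{1}))-\delta\ell(g_{2},u)-f(\Phi^{u}_{\delta}(g_{2}))\bigr|\le\bigl(\delta L_{\ell}+\Lip(f)e^{L_{\xi}\delta}\bigr)\dist_{G}(g_{1},g_{2}).
\]
Since the infimum of a family of functions that are all Lipschitz with a common constant is Lipschitz with that constant, and since $\delta L_{\ell}\le e^{L_{\xi}\delta}L_{\ell}\delta$, the stated bound follows.

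The flow estimate is the main point. We take a minimizing geodesic $\gamma:[0,1]\to G$ from $g_{1}$ to $g_{2}$, with length $D=\dist_{G}(g_{1},g_{2})$, and transport it by the flow: $\gamma_{s}:=\Phi^{u}_{s}\circ\gamma$. Let $\Lambda(s)$ denote the length of $\gamma_{s}$; then $\dist_{G}(\Phi^{u}_{s}(g_{1}),\Phi^{u}_{s}(g_{2}))\le\Lambda(s)$. We bound the growth of the length using the left trivialization. Write $\partial_{r}\gamma_{s}(r)=(TL_{\gamma_{s}(r)})\eta(s,r)$, so that the length element is $\norm{\eta}_{\Lie}$ by left invariance of the metric. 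Differentiating the relation $\partial_{s}\gamma_{s}=(TL_{\gamma_{s}})\xi(\gamma_{s},u)$ in $r$, and using the Maurer--Cartan structure of left-trivialized velocities, gives
\[
\partial_{s}\eta=\tfrac{d}{dr}\,\xi(\gamma_{s}(r),u)+[\eta,\xi].
\]
The bracket term is $\Ad$-skew for the invariant inner product, since $\inner{[\eta,\xi]}{\eta}=0$, so it does not change $\norm{\eta}$. The remaining term has norm at most $L_{\xi}\norm{\eta}$, because $\xi$ is $L_{\xi}$-Lipschitz along the curve. Hence $\partial_{s}\norm{\eta}\le L_{\xi}\norm{\eta}$, Gronwall gives $\Lambda(s)\le e^{L_{\xi}s}D$, and the flow estimate follows.

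The obstacle is regularity: $\xi$ is only Lipschitz, so $\eta$ need not be differentiable. We would handle this by mollifying $\xi$ in $g$ through convolution on $G$, which preserves the Lipschitz constant up to $o(1)$ when we convolve with an approximate identity. We then pass to the limit using continuous dependence of the flows. An alternative that avoids differentiating $\eta$ is to compare the two trajectories directly through $\frac{d}{ds}\dist_{G}$, using the first variation of distance. Here bi-invariance makes the right-translation structure contribute no expansion, but cut-locus issues arise, and the mollified curve-length argument avoids them.
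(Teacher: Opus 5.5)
Your argument is correct. The paper actually gives no proof of this proposition. The first three properties are the standard inf-over-affine argument you give. The one substantive ingredient, $\dist_G(\Phi^u_\delta(g_1),\Phi^u_\delta(g_2))\le e^{L_\xi\delta}\dist_G(g_1,g_2)$, appears in the paper only implicitly, as the case $\eps=0$ of Step~2 in the proof of \cref{thm:VV} (estimate \eqref{eq:VV_synchronous_distance}).

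There the authors take the route you list as your alternative. They split the pair of velocities as $\bigl(TL_g\xi(g,u),TL_h\xi(g,u)\bigr)+\bigl(0,TL_h(\xi(h,u)-\xi(g,u))\bigr)$. The first pair generates the diagonal right translation $(g,h)\mapsto(g\exp(tX),h\exp(tX))$, which is an isometry for the bi-invariant metric, and the second is bounded by $L_\xi\dist_G$. A Calabi barrier at the cut locus then gives $D^{+}\dist_G\le L_\xi\dist_G$.

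Your transported-geodesic argument uses bi-invariance in the equivalent infinitesimal form $\inner{[\eta,\xi]}{\eta}_{\Lie}=0$, inside the zero-curvature identity $\partial_s\eta=\partial_r\xi+[\eta,\xi]$. What it buys is a proof with no cut-locus or nonsmooth-distance analysis, since the length of a smooth family of curves is differentiable. The price is the regularization step, and that step is easier than you say. Right (or left) convolution of $\xi(\cdot,u)$ with a smooth approximate identity preserves $L_\xi$ exactly, not just up to $o(1)$. This is the same bi-invariance computation the paper uses for $\Lip(\Pi_N^{\mathrm{heat}}f)\le\Lip(f)$ in \cref{lem:filter_heat}. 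Uniform convergence of the mollified fields together with Gronwall then gives uniform convergence of the flows, so the inequality passes to the limit.

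Finally, the embedded estimate \eqref{eq:flow_embedding_lip} used in \cref{lem:controlled_small_noise} would not suffice here. Converting it back to $\dist_G$ costs the fixed factor $C_\rho/c_\rho$, so it cannot produce the sharp constant $e^{L_\xi\delta}$. An intrinsic argument, either yours or the paper's coupling argument, is what the stated constant requires.
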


Let $\val_{\delta}:G\times\{t_{k}\}_{k=0}^{K}\to\R$ be defined by
$\val_{\delta}(\cdot,t_{K}):=\phi$ and
\begin{equation}
\val_{\delta}(\cdot,t_{k}):=\mathcal{T}_{\delta}\bigl(\val_{\delta}(\cdot,t_{k+1})\bigr),\quad k=K-1,\dots,0.
\label{eq:DPP_discrete}
\end{equation}

\begin{theorem}
\label{thm:DPP_convergence}
Under Assumption~\ref{ass:standing}, there exists $C>0$ depending only on the constants of Assumption~\ref{ass:standing}, the horizon $T$, and the geometry of $G$ such that
\begin{equation}
\sup_{0\le k\le K}\norm{\val_{\delta}(\cdot,t_{k})-\val(\cdot,t_{k})}_{L^{\infty}(G)}\le C\sqrt{\delta}.
\label{eq:DPP_rate}
\end{equation}
If $\val\in C^{2}(G\times[0,T])$, the rate improves to
$\norm{\val_{\delta}-\val}_{\infty}\le C\delta$.
\end{theorem}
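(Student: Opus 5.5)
We prove \eqref{eq:DPP_rate} by the classical two-sided argument for semi-Lagrangian schemes, in the form of Capuzzo-Dolcetta and Falcone, transplanted to $G$. The two structural inputs are \cref{prop:Tdelta_properties} and the semigroup property of $(S_\tau)$. Since $\val(\cdot,t_k)=S_{T-t_k}\phi$ and $\val_\delta(\cdot,t_k)=\mathcal{T}_\delta^{K-k}\phi$, the telescoping identity
\[
\mathcal{T}_\delta^{j}\phi-S_{j\delta}\phi=\sum_{i=0}^{j-1}\Bigl(\mathcal{T}_\delta^{\,j-1-i}\mathcal{T}_\delta S_{i\delta}\phi-\mathcal{T}_\delta^{\,j-1-i}S_\delta S_{i\delta}\phi\Bigr)
\]
and the sup-norm non-expansiveness of $\mathcal{T}_\delta$ give
\[
\norm{\val_\delta(\cdot,t_k)-\val(\cdot,t_k)}_\infty\le\sum_{i}\norm{\mathcal{T}_\delta\psi_i-S_\delta\psi_i}_\infty ,
\qquad \psi_i=S_{i\delta}\phi .
\]
Each $\psi_i$ is uniformly Lipschitz, with constant $L$ depending only on the data and $T$. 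A local truncation error of order $\delta^2$ would therefore give only a rate $O(\delta)$ at best, and one needs such an error uniformly for Lipschitz $\psi$. This is false in general: for Lipschitz $\psi$, the difference $\mathcal{T}_\delta\psi-S_\delta\psi$ is only of order $\delta$, because controls may switch within a step. So the naive telescoping loses everything, and I will instead argue differently for the two inequalities.

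\emph{Upper bound $\val_\delta\le\val+C\sqrt\delta$.} Apply the sup-convolution / regularization trick. Mollify $\val$ intrinsically on $G$ by convolution with a smooth approximate identity $\rho_\epsilon$ supported in a geodesic ball of radius $\epsilon$, combined with a mollification in time. By bi-invariance, convolution commutes with left translations, so the mollified function $\val^\epsilon$ satisfies $\norm{\val^\epsilon-\val}_\infty\le C\epsilon$, $\norm{\nabla^2\val^\epsilon}\le C/\epsilon$, and, by the standard concavity argument (infimum of a family of translates), it is an approximate subsolution: $-\partial_t\val^\epsilon+\Ham(g,\nabla^{\mathrm{body}}\val^\epsilon)\le C\epsilon$. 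The error comes from the $g$-dependence of $\xi,\ell$, which is Lipschitz. A Taylor expansion along the flow $\Phi^u_\delta$ then yields
\[
\mathcal{T}_\delta\val^\epsilon(\cdot,t_{k+1})\le\val^\epsilon(\cdot,t_k)+C\delta(\epsilon+\delta/\epsilon).
\]
Monotonicity and additive homogeneity of $\mathcal{T}_\delta$ let us iterate this over $K$ steps, for a total of $C(\epsilon+\delta/\epsilon)+2C\epsilon$. Choosing $\epsilon=\sqrt\delta$ gives the upper bound. The reverse inequality comes from the symmetric argument, using inf-convolution (or the analogous convex-type regularization) to produce an approximate supersolution.

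Alternatively, and more simply, the lower bound $\val\le\val_\delta+C\delta$ follows from control theory. Piecewise-constant controls on the grid $t_k$ are admissible, so the discrete chain with optimal discrete controls generates an admissible trajectory. Its cost differs from the discrete cost by $O(\delta^2)$ per step by Gronwall and the Lipschitz bounds, which gives $\val\le\val_\delta+C\delta$. For the upper bound the same idea fails, because controls switch within steps, and this is the main obstacle. There I would rely on the mollification argument above, or on Krylov-type shaking of coefficients. The delicate point is producing, on the curved group, a smooth approximate subsolution with controlled second derivatives. Using convolution with respect to the group structure, with body-frame derivatives commuting with right convolution, handles this intrinsically. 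The $C^2$ statement follows from the same Taylor step with $\epsilon$ removed: consistency is $O(\delta^2)$ per step, which gives $O(\delta)$ overall.
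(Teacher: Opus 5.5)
The direction $\val_\delta\le\val+C\sqrt\delta$, which you correctly identify as the hard one, is not proved, because your regularization step has the comparison reversed. The approximate subsolution inequality is $-\partial_t\val^\epsilon+\Ham(g,\nabla_G^{\mathrm{body}}\val^\epsilon)\le C\epsilon$. The two expansions are $\mathcal T_\delta\val^\epsilon(\cdot,t_{k+1})=\val^\epsilon(\cdot,t_{k+1})-\delta\,\Ham(\cdot,\nabla_G^{\mathrm{body}}\val^\epsilon)+O(\delta^2/\epsilon)$ and $\val^\epsilon(\cdot,t_k)=\val^\epsilon(\cdot,t_{k+1})-\delta\,\partial_t\val^\epsilon+O(\delta^2/\epsilon)$. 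Together they give $\val^\epsilon(\cdot,t_k)\le\mathcal T_\delta\val^\epsilon(\cdot,t_{k+1})+C\delta(\epsilon+\delta/\epsilon)$, which is a discrete \emph{sub}solution inequality. Iterating it yields $\val\le\val_\delta+C\sqrt\delta$, the side your piecewise-constant-control argument already gives with $C\delta$; that argument, and your $C^2$ case, are fine. The inequality you wrote, $\mathcal T_\delta\val^\epsilon(\cdot,t_{k+1})\le\val^\epsilon(\cdot,t_k)+\dots$, would require $-\partial_t\val^\epsilon+\Ham\ge-C\epsilon$. Averaging cannot supply this, since Jensen's inequality for the convex map $(q,p)\mapsto-q+\Ham(g,p)$ only goes the subsolution way. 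The same holds for the sup-convolution you mention, and that is what the paper uses for the easy side. So the hard side rests entirely on your one line about a ``symmetric argument'' with inf-convolution, and that argument is not symmetric.

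The inf-convolution $\val_\alpha$ is only semiconcave, so it has no Taylor expansion along $\Phi^u_\delta$. The smooth functions touching it lie above it, which is the wrong side for testing a supersolution property at $(g,t_k)$. The paper (\cref{app:proofs}) resolves this by doubling variables. Set $\rho_k[w]:=w(\cdot,t_k)-\mathcal T_\delta w(\cdot,t_{k+1})$ and $\mathcal F[\psi]:=-\partial_t\psi+\Ham(\cdot,\nabla_G^{\mathrm{body}}\psi)$. Fix $g$ and a minimizing pair $(h^*,s^*)$ for $\val_\alpha(g,t_k)$. The outer paraboloid $\widetilde\psi\ge\val_\alpha$ touches at $(g,t_k)$, so monotonicity gives $\rho_k[\val_\alpha](g)\ge\rho_k[\widetilde\psi](g)\ge\delta\,\mathcal F[\widetilde\psi](g,t_{k+1})-C\delta^2/\alpha$. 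Then $\mathcal F[\widetilde\psi]$ is bounded below using the supersolution inequality $-\widetilde q+\Ham(h^*,\widetilde p)\ge0$ of $\val$ at the contact point $(h^*,s^*)$, where the inner paraboloid touches $\val$ from below; this inequality is derived from the DPP with $o(\tau)$-optimal controls. Bi-invariance makes the body covectors at $g$ and $h^*$ coincide. The $g$-modulus of $\Ham$, with $\dist_G(g,h^*)\le C_1\alpha$, then costs only $C\alpha$. Hence $\inf_G\rho_k[\val_\alpha]\ge-C(\delta\alpha+\delta^2/\alpha)$, and after the time boundary layers $\val_\delta\le\val+C(\alpha+\delta/\alpha)$. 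If you prefer a Krylov/Barles--Jakobsen route, the mollification must be applied to the discrete solution $\val_\delta$, using the concavity of $\mathcal T_\delta$ and a shaken discrete problem, followed by comparison for the PDE. Mollifying $\val$ itself cannot yield this direction.
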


\begin{remark}
\cref{thm:DPP_convergence} is the standard semi-Lagrangian consistency and stability argument of \cite{BardiCapuzzoDolcetta1997} or \cite{Falcone1987}, transposed to the compact Riemannian setting. The Euclidean argument is intrinsic in nature. It relies only on the Lipschitz continuity of the data, the non-expansiveness of the Bellman operator in $L^{\infty}$, and the doubling-of-variables comparison technique, in which the test functions are built from the squared geodesic distance $\dist_{G}^{2}$. On a compact group, $\dist_{G}^{2}$ is smooth on a uniform neighbourhood of the diagonal (the injectivity radius is bounded below by compactness), and the constants entering the comparison argument depend only on $T$, the data constants of \cref{ass:standing}, and curvature and injectivity-radius bounds of $(G,\dist_{G})$ (see \cite{CrandallIshiiLions1992} for the doubling machinery and \cite{AzagraFerreraLopezMesas2005}-type tools for nonsmooth analysis on Riemannian manifolds). The exponential map is used only locally, where it is a diffeomorphism (on a compact Lie group it is surjective but in general not injective). A complete proof, intrinsic to $(G,\dist_{G})$, is given in \cref{app:proofs}.
\end{remark}

\section{Spectral approximation via the Peter-Weyl basis}
\label{sec:specapprox}

In this section, we approximate the discrete value function $\val_{\delta}$ of \eqref{eq:DPP_discrete} by orthogonal projection onto a finite-dimensional subspace of $L^{2}(G)$ spanned by the matrix
coefficients of the irreducible unitary representations of $G$ (Peter-Weyl theorem~\cite{Knapp2002}).
We first present the scheme and then prove a sup-norm error bound under an explicit uniform Sobolev-regularity hypothesis on $\val_{\delta}$.
This analysis isolates the central analytical difficulty of the spectral approach, namely the $L^{2}/L^{\infty}$ norm mismatch between the orthogonal projector, which is non-expansive in $L^{2}$, and the Bellman operator, which is non-expansive in $L^{\infty}$, the norm intrinsic to dynamic programming and to the comparison of value functions. The regularity hypothesis turns out to be too strong for generic non-smooth value functions, and we will then address this difficulty in \cref{sec:stab,sec:markov}.

\subsection{Peter-Weyl decomposition, and spectral Bellman scheme}
\label{ssec:PW}

We denote by $\widehat G$ the unitary dual of $G$, namely, the set of equivalence classes of irreducible unitary representations $\pi:G\to U(d_{\pi})$ with $d_{\pi}=\dim\pi<\infty$. For each $\pi\in\widehat G$, we fix an orthonormal basis $\{e_{i}^{\pi}\}_{i=1}^{d_{\pi}}$ of $\C^{d_{\pi}}$ and we denote by
$\pi_{ij}$ the associated \emph{matrix coefficients}, defined by $\pi_{ij}(g):=\bigl\langle e_{i}^{\pi},\,\pi(g)e_{j}^{\pi}\bigr\rangle_{\C^{d_{\pi}}}$, $g\in G$.

\begin{theorem}[see, e.g., \cite{Folland2016}]
\label{thm:PW}
The family
$\bigl\{\sqrt{d_{\pi}}\,\pi_{ij}:\pi\in\widehat G,\,1\le i,j\le d_{\pi}\bigr\}$
is a complete orthonormal basis of $L^{2}(G,\mu)$.
Every $f\in L^{2}(G)$ admits the expansion
\begin{equation}
f(g)=\sum_{\pi\in\widehat G}\sum_{i,j=1}^{d_{\pi}}\widehat f^{\pi}_{ij}\,\sqrt{d_{\pi}}\,\pi_{ij}(g),
\label{eq:PW_expansion}
\end{equation}
with coefficients
\begin{equation}
\widehat f^{\pi}_{ij}:=\sqrt{d_{\pi}}\int_{G}f(g)\,\overline{\pi_{ij}(g)}\,\mathrm{d}\mu(g)
=\sqrt{d_{\pi}}\int_{G}f(g)\,\pi_{ji}(g^{-1})\,\mathrm{d}\mu(g),
\label{eq:PW_coeff}
\end{equation}
where the two equivalent integrals are related by the unitarity identity $\overline{\pi_{ij}(g)}=\pi_{ji}(g^{-1})$, and the series \eqref{eq:PW_expansion} converges in $L^{2}(G)$.
Furthermore, the Casimir Laplacian $\Lap_{G}$ acts on each isotypic component $V_{\pi}\otimes V_{\pi}^{*}\subset L^{2}(G)$ by multiplication by a constant $-c_{\pi}\le 0$, the \emph{Casimir eigenvalue} of $\pi$.
\end{theorem}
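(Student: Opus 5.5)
The statement is the Peter-Weyl theorem together with the Casimir action, so the plan is to reduce to three standard ingredients: Schur orthogonality, density of matrix coefficients, and Schur's lemma applied to the Casimir element.

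\emph{Orthonormality.} Fix $\pi,\sigma\in\widehat G$ and a linear map $A:\C^{d_{\sigma}}\to\C^{d_{\pi}}$. Average it over the group,
\[
\widetilde A=\int_{G}\pi(g)A\sigma(g^{-1})\,\mathrm{d}\mu(g),
\]
which is an intertwiner by invariance of the Haar measure $\mu$. Schur's lemma then gives $\widetilde A=0$ if $\pi\not\cong\sigma$, and $\widetilde A=(\Tr A/d_{\pi})I$ if $\pi=\sigma$. Taking $A$ to be elementary matrices yields
\[
\int_{G}\pi_{ij}\overline{\sigma_{kl}}\,\mathrm{d}\mu=\delta_{\pi\sigma}\delta_{ik}\delta_{jl}/d_{\pi},
\]
which is exactly the orthonormality of $\{\sqrt{d_{\pi}}\pi_{ij}\}$.

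\emph{Completeness.} This is the step I expect to be the main obstacle. I would follow the compact-operator route.
\begin{itemize}[label={}]
\item Take an approximate identity $\psi_{n}\in C(G)$ with $\psi_{n}(g^{-1})=\overline{\psi_{n}(g)}$.
\item The right convolution operator $f\mapsto f*\psi_{n}$ is compact and self-adjoint on $L^{2}(G)$, since it is Hilbert-Schmidt, and it commutes with left translations.
\item Its eigenspaces for nonzero eigenvalues are finite-dimensional and left-invariant, so they are finite-dimensional representations of $G$.
\item Each such space consists of continuous functions. Decomposing it into irreducibles shows that its elements are finite linear combinations of matrix coefficients: for $f$ in an invariant subspace $W$ with basis $f_{k}$, write $f(gh)=\sum_{k}c_{k}(g)f_{k}(h)$ and evaluate at $h=e$.
\end{itemize}
Now let $f\perp$ all matrix coefficients. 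Then $f$ is orthogonal to every nonzero eigenspace of the convolution operator, so by the spectral theorem $f*\psi_{n}=0$ for all $n$. Letting $n\to\infty$ gives $f=0$. The expansion \eqref{eq:PW_expansion} with coefficients \eqref{eq:PW_coeff} is then the Fourier expansion in this orthonormal basis. The second form of \eqref{eq:PW_coeff} follows from unitarity, $\overline{\pi_{ij}(g)}=\langle \pi(g)e_{j},e_{i}\rangle=\pi_{ji}(g^{-1})$.

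\emph{Casimir action.} The span of the $\pi_{ij}$ is invariant under left and right translations and is isomorphic to $V_{\pi}\otimes V_{\pi}^{*}$. The operator $\Lap_{G}=\sum_{j}\widetilde X_{j}^{2}$ is built from left-invariant fields, which differentiate the right regular action. Concretely, for fixed $i$ the function $g\mapsto\pi(g)$ satisfies
\[
\widetilde X\,\pi(g)=\pi(g)\,d\pi(X),
\]
hence
\[
\Lap_{G}\pi_{ij}=\bigl(\pi(\cdot)\,\Omega_{\pi}\bigr)_{ij},\qquad \Omega_{\pi}=\sum_{j}d\pi(X_{j})^{2}.
\]
By $\Ad$-invariance of the inner product, $\Omega_{\pi}$ commutes with $d\pi(\Lie)$, hence with $\pi(G)$ since $G$ is connected. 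Schur's lemma then gives $\Omega_{\pi}=-c_{\pi}I$.

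Finally, $d\pi(X_{j})$ is skew-Hermitian, so $d\pi(X_{j})^{2}\le0$, and therefore $c_{\pi}\ge0$.
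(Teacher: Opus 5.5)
Your proposal is correct. It is essentially the standard argument behind the reference the paper cites, since the paper gives no proof of its own: Schur orthogonality by averaging, completeness via the compact self-adjoint convolution operators $f\mapsto f*\psi_n$ as in Folland, and Schur's lemma applied to $\Omega_\pi=\sum_j d\pi(X_j)^2$, which is central by $\Ad$-invariance, for the Casimir action. One point is worth making explicit. In your evaluation step the coefficients $c_k(g)$ are matrix entries of the left regular representation on $W$ evaluated at $g^{-1}$, so they are conjugates of unitary matrix coefficients. This is harmless, because $\overline{\pi}$ is again irreducible, so the span of all $\pi_{ij}$ with $\pi\in\widehat G$ is closed under complex conjugation.
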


Throughout the paper, we use the conjugate form $\overline{\pi_{ij}(g)}$ in the forward Peter-Weyl transform.

By \Cref{thm:PW} and \eqref{eq:Hs_def}, the Sobolev norm admits the
Peter-Weyl form
\begin{equation}
\norm{f}_{H^{s}(G)}^{2}=\sum_{\pi\in\widehat G}\sum_{i,j=1}^{d_{\pi}}(1+c_{\pi})^{s}\,\bigl|\widehat f^{\pi}_{ij}\bigr|^{2},\quad s\ge 0.
\label{eq:Hs_PW}
\end{equation}

For $N\ge 1$, set
\begin{equation}
\begin{aligned}
\widehat G_{N}&:=\bigl\{\pi\in\widehat G:\sqrt{c_{\pi}}\le N\bigr\},\\
X_{N}&:=\overline{\mathrm{span}}\bigl\{\pi_{ij}:\pi\in\widehat G_{N},\,1\le i,j\le d_{\pi}\bigr\}\subset L^{2}(G).
\end{aligned}
\end{equation}
The orthogonal projector $\Pi_{N}:L^{2}(G)\to X_{N}$ is given by
truncating the expansion \eqref{eq:PW_expansion} to
$\pi\in\widehat G_{N}$.

The natural Galerkin discretization of the dynamic programming equation \eqref{eq:DPP_discrete} applies the orthogonal $L^{2}$ projector $\Pi_{N}$ after each Bellman step. In particular,
for $K\in\N$, $\delta=T/K$, $N\ge 1$, the \emph{spectral Bellman
iterate} $\val^{N}_{k}:G\to\R$, $k\in\{0,1,\dots,K\}$, is defined by
\begin{equation}
	\left\{
	\begin{aligned}
&\;\val^{N}_{K}:=\Pi_{N}\phi,\\
&\;\val^{N}_{k}:=\Pi_{N}\,\mathcal{T}_{\delta}\bigl(\val^{N}_{k+1}\bigr),\quad k=K-1,K-2,\dots,0.
\end{aligned}
\right.
\label{eq:VN_scheme}
\end{equation}

\subsection{Single-step error and the norm mismatch obstruction}
\label{ssec:err_obstruction}

We first define the spectral approximation error at time step $t_{k}$ as $E_{k}:=\val^{N}_{k}-\val_{\delta}(\cdot,t_{k})$. By an elementary computation, we have that for every $k\in\{0,1,\dots,K-1\}$,
\begin{equation}
	E_{k}=\Pi_{N}\bigl(\mathcal{T}_{\delta}\val^{N}_{k+1}-\mathcal{T}_{\delta}\val_{\delta}(\cdot,t_{k+1})\bigr)-\Pi_{N}^{\perp}\val_{\delta}(\cdot,t_{k}),
	\label{eq:err_identity}
\end{equation}
where $\Pi_{N}^{\perp}:=I-\Pi_{N}$.
In order to turn the identity \eqref{eq:err_identity} into a quantitative bound that iterates from $k=K$ down to $k=0$, we need a norm in which both the projector $\Pi_{N}$ and the Bellman operator $\mathcal{T}_{\delta}$ are non-expansive. However, the central analytical difficulty of the spectral approach is that both natural candidates fail. In particular, the projector is not non-expansive in $L^{\infty}$, and the Bellman operator is not non-expansive in $L^{2}$. The following lemma quantifies this obstruction.

\begin{lemma}
\label{lem:norm_mismatch}
The orthogonal projector $\Pi_{N}$ and the Bellman operator
$\mathcal{T}_{\delta}$ satisfy the following bounds, each of which
is sharp in general.
\begin{enumerate}[label=\textnormal{(\roman*)},leftmargin=2em]
\item $\norm{\Pi_{N}f}_{L^{2}(G)}\le\norm{f}_{L^{2}(G)}$ for all
$f\in L^{2}(G)$. Equivalently, $\norm{\Pi_{N}}_{L^{2}\to L^{2}}=1$.
\item $\norm{\mathcal{T}_{\delta}f-\mathcal{T}_{\delta}g}_{L^{\infty}(G)}\le\norm{f-g}_{L^{\infty}(G)}$
for all $f,g\in C(G)$.
\item $\norm{\mathcal{T}_{\delta}f-\mathcal{T}_{\delta}g}_{L^{2}(G)}\le\norm{f-g}_{L^{\infty}(G)}$
for all $f,g\in C(G)$, using $\mu(G)=1$.
\item The \emph{Lebesgue constant}
$\Lambda_{N}:=\norm{\Pi_{N}}_{L^{\infty}(G)\to L^{\infty}(G)}$
is unbounded as $N\to\infty$, with explicit growth rates, for the circle and for compact Lie groups of Lie rank one, i.e. compact Lie groups whose maximal torus is one-dimensional:
\begin{equation}
\Lambda_{N}\asymp\log N\ (\Sone),\quad
\Lambda_{N}\asymp N^{(d-1)/2}\quad(\text{rank-one},\ d\ge 2,\ \text{e.g.\ }\SO{3}),
\label{eq:Lambda_growth}
\end{equation}
(see \cite{Zygmund2002} for $d=1$ and \cite{BonamiClerc1973} for the rank-one case $d\ge 2$). 
For higher-rank compact Lie groups the explicit growth rate depends on the group, the truncation geometry, and the choice of dominant weight ordering.
\end{enumerate}
\end{lemma}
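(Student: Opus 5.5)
Items (i)--(iii) are elementary, and the plan is to dispatch them first. For (i), $\Pi_{N}$ is the orthogonal projector onto the closed subspace $X_{N}$, so Bessel's inequality in the orthonormal basis of \cref{thm:PW} gives $\norm{\Pi_{N}f}_{L^{2}}\le\norm{f}_{L^{2}}$. Equality for any $f\in X_{N}$ shows the norm is exactly $1$. Item (ii) is the sup-norm non-expansiveness already recorded in \cref{prop:Tdelta_properties}. For a self-contained argument, for each $g$ and $u$ we have $\delta\ell(g,u)+f(\Phi^{u}_{\delta}(g))\le\delta\ell(g,u)+h(\Phi^{u}_{\delta}(g))+\norm{f-h}_{\infty}$. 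Taking the infimum over $u$ and exchanging the roles of $f$ and $h$ then gives the claim. Item (iii) follows from (ii) and $\mu(G)=1$, since $\norm{\cdot}_{L^{2}(\mu)}\le\norm{\cdot}_{L^{\infty}}$ for a probability measure. Sharpness in (i)--(iii) is witnessed by constants: if $f-h\equiv c$, then additive homogeneity gives $\mathcal{T}_{\delta}f-\mathcal{T}_{\delta}h\equiv c$, which is attained in both norms.

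For (iv), the plan is to write $\Pi_{N}$ as convolution with the kernel $D_{N}(g)=\sum_{\pi\in\widehat G_{N}}d_{\pi}\chi_{\pi}(g)$, using the Schur orthogonality relations. Then $\Lambda_{N}=\norm{D_{N}}_{L^{1}(G)}$. The bound $\le$ follows from Young's inequality. For the bound $\ge$, test against $f(h)=\overline{\mathrm{sgn}\,D_{N}(h^{-1})}$, smoothed by continuous approximants if one insists on $C(G)$. Evaluating at $e$ then recovers $\norm{D_{N}}_{L^{1}}$. The task therefore reduces to computing the $L^{1}$ norm of a central Dirichlet kernel.

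On $S^{1}$, the characters are $e^{in\theta}$ and the Casimir eigenvalues are $n^{2}$. Thus $\widehat G_{N}=\{|n|\le N\}$ and $D_{N}$ is the classical Dirichlet kernel, whose $L^{1}$ norm is $\frac{4}{\pi^{2}}\log N+O(1)$ \cite{Zygmund2002}. In the rank-one case, $G$ is locally $\SU{2}$, $\SO{3}$, or $S^{1}$ times such a factor. Since $d\ge 2$ and we work with rank one, $G$ is $\SU{2}$ or $\SO{3}$ with $d=3$. Since $D_{N}$ is central, the Weyl integration formula reduces $\norm{D_{N}}_{L^{1}}$ to a one-dimensional integral over the maximal torus. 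It takes the form $\int_{0}^{\pi}|\sum_{n}(n+1)\sin((n+1)\theta)/\sin\theta|\,\sin^{2}\theta\,d\theta$, where the sum runs over the admissible highest weights with $\sqrt{c_{n}}\le N$. Summation by parts, or the Christoffel--Darboux closed form, will show that this sum behaves like $N\cos(N\theta)/\sin^{2}\theta$ plus lower-order terms away from $\theta=0$. The integral is then $\asymp N\int_{1/N}^{\pi}|\cos N\theta|\,d\theta\asymp N=N^{(d-1)/2}$. The region $\theta\lesssim 1/N$ contributes $O(N^{3}\cdot N^{-3})=O(1)$. The general rank-one statement, phrased via spherical or Jacobi expansions, is the theorem of \cite{BonamiClerc1973}, which we invoke. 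The exponent $(d-1)/2$ also matches the standard Lebesgue-constant asymptotics for eigenfunction expansions on $d$-dimensional compact rank-one symmetric spaces.

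The main obstacle is the matching lower bound in the rank-one case. The upper bound is routine, but the lower bound requires controlling cancellation in the Weyl-reduced kernel on a set of $\theta$ of positive measure. The first approach would be to restrict to $\theta\in[\pi/4,3\pi/4]$, where $\sin\theta$ is bounded below and the leading oscillatory term $N\cos((N+c)\theta)$ dominates the remainder, which is $O(1)$ uniformly there. Averaging $|\cos|$ over many periods then yields a lower bound of order $N$. For the final remark about higher rank, no proof is required. It suffices to note that $\widehat G_{N}$ is then a ball in the weight lattice whose shape affects the asymptotics.
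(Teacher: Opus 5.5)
Your proposal is correct and follows the paper's route. The paper gives no separate proof: it takes (i)--(iii) as immediate from orthogonality, \cref{prop:Tdelta_properties} and $\mu(G)=1$, and quotes (iv) from \cite{Zygmund2002} and \cite{BonamiClerc1973}, so your reduction to the $L^{1}$ norm of the sharp Dirichlet kernel $\sum_{\pi\in\widehat G_{N}}d_{\pi}\chi_{\pi}$ and the Weyl-integration computation on $\SU{2}$ and $\SO{3}$ simply make explicit what those references contain. One minor slip: $\Sone\times\SU{2}$ has rank two, so the connected rank-one groups are exactly $\Sone$, $\SU{2}$ and $\SO{3}$, which is all your conclusion $d=3$ needs.
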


The norm mismatch displayed in \cref{lem:norm_mismatch} is the obstruction addressed in the remainder of the paper. We do not claim that no norm exists in which both $\Pi_{N}$ and $\mathcal{T}_{\delta}$ are non-expansive simultaneously, but the standard $L^{2}/L^{\infty}$ stability framework, in which each operator is natural, cannot be closed without the Lebesgue-constant amplification of \cref{lem:norm_mismatch}\,(iv). From the optimization and optimal control viewpoint, the properties at stake are not technical conveniences. Indeed, monotonicity and sup-norm non-expansiveness are the discrete counterparts of the comparison principle, and they are precisely the hypotheses on which the convergence framework of Barles and Souganidis \cite{BarlesSouganidis1991} for approximation schemes of viscosity solutions is built. 
A discretization that destroys them severs the link between the scheme and the dynamic programming principle.

\subsection{A conditional sup-norm convergence under uniform $H^{s}$ regularity hypothesis}
\label{ssec:sup_norm}

The norm mismatch of \cref{ssec:err_obstruction} does not rule out convergence.
Indeed, the error recursion can still be closed in the supremum norm, at the cost of a factor $\Lambda_{N}$ per time step. In this subsection we derive the resulting bound, in which this amplification is balanced against a per-step consistency error of algebraic order in $N$, valid under a uniform $H^{s}$ bound on $\val_{\delta}$.

Taking sup-norms in \eqref{eq:err_identity} and using \cref{lem:norm_mismatch}, we obtain the recursion
\begin{equation}
\norm{E_{k}}_{L^{\infty}}\le\Lambda_{N}\norm{E_{k+1}}_{L^{\infty}}+\norm{\Pi_{N}^{\perp}\val_{\delta}(\cdot,t_{k})}_{L^{\infty}}.
\label{eq:err_rec_sup}
\end{equation}
The per-step consistency error $\norm{\Pi_{N}^{\perp}\val_{\delta}(\cdot,t_{k})}_{L^{\infty}}$
requires spectral approximation in the sup-norm. By Sobolev embedding, this typically requires a regularity hypothesis on $\val_{\delta}$ which goes beyond Lipschitz continuity.

\begin{assumption}
\label{ass:Hs}
There exist $s>d/2$ and $M_{s}<\infty$ such that
\begin{equation}
\sup_{0\le k\le K}\norm{\val_{\delta}(\cdot,t_{k})}_{H^{s}(G)}\le M_{s} \ .
\tag{$\mathrm{H}_{s}$}
\label{eq:Hs_hypothesis}
\end{equation}
\end{assumption}

The per-step consistency error is then controlled by the following standard spectral estimate, whose short proof we include for completeness.

\begin{lemma}
\label{lem:per_step}
Let $s>d/2$ and fix $\eta\in(0,s-d/2)$. There exists $C=C(G,s,\eta)>0$ such that for every $f\in H^{s}(G)$ and every
$N\ge 1$,
\begin{equation}
\norm{\Pi_{N}^{\perp}f}_{L^{\infty}(G)}\le C\,N^{-(s-d/2-\eta)}\,\norm{f}_{H^{s}(G)}.
\label{eq:per_step_general}
\end{equation}
\end{lemma}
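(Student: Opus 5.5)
The plan is to bound the sup-norm of the tail by the absolute sum of its Peter-Weyl series, and then split that sum by Cauchy-Schwarz into a Sobolev-weighted $\ell^{2}$ factor and a purely group-theoretic factor. Since $s>d/2$, $f\in H^{s}(G)\hookrightarrow C(G)$ and the expansion \eqref{eq:PW_expansion} converges absolutely and uniformly; this follows from the same Cauchy-Schwarz bound below applied with $N$ replaced by $0$. Hence $\Pi_{N}^{\perp}f(g)=\sum_{\pi\notin\widehat G_{N}}\sum_{i,j}\widehat f^{\pi}_{ij}\sqrt{d_{\pi}}\,\pi_{ij}(g)$ pointwise.

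For fixed $\pi$ I write $\sum_{i,j}\widehat f^{\pi}_{ij}\pi_{ij}(g)=\Tr(\widehat f^{\pi}\,\pi(g)^{T})$, up to the convention of the index order. Unitarity of $\pi(g)$ gives $\sum_{i,j}|\pi_{ij}(g)|^{2}=d_{\pi}$. Cauchy-Schwarz in $(i,j)$ then gives
\[
\Bigl|\sum_{i,j}\widehat f^{\pi}_{ij}\sqrt{d_{\pi}}\,\pi_{ij}(g)\Bigr|\le d_{\pi}\Bigl(\sum_{i,j}|\widehat f^{\pi}_{ij}|^{2}\Bigr)^{1/2}.
\]
A second Cauchy-Schwarz over $\pi$, with weights $(1+c_{\pi})^{\pm s/2}$, together with \eqref{eq:Hs_PW}, yields
\[
\norm{\Pi_{N}^{\perp}f}_{L^{\infty}}\le\Bigl(\sum_{\sqrt{c_{\pi}}>N}d_{\pi}^{2}(1+c_{\pi})^{-s}\Bigr)^{1/2}\norm{f}_{H^{s}(G)}.
\]
Everything therefore reduces to bounding the tail sum $R_{N}:=\sum_{\sqrt{c_{\pi}}>N}d_{\pi}^{2}(1+c_{\pi})^{-s}$.

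The main step is a Weyl-law count. The number $\mathcal N(\lambda):=\sum_{c_{\pi}\le\lambda^{2}}d_{\pi}^{2}$ equals $\dim X_{\lambda}$, which is the eigenvalue counting function of $-\Lap_{G}$ with multiplicity, since each isotypic component $V_{\pi}\otimes V_{\pi}^{*}$ has dimension $d_{\pi}^{2}$ and eigenvalue $-c_{\pi}$ by \cref{thm:PW}. I will invoke Weyl's law on the compact Riemannian manifold $G$, or elementarily the heat-trace bound $\mathcal N(\lambda)\le e\sum_{\pi}d_{\pi}^{2}e^{-c_{\pi}/\lambda^{2}}=e\,p_{1/\lambda^{2}}(e,e)\le C\lambda^{d}$. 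This gives $\mathcal N(\lambda)\le C(1+\lambda)^{d}$.

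I then sum dyadically: on the shell $2^{m}N<\sqrt{c_{\pi}}\le2^{m+1}N$, the contribution to $R_{N}$ is at most $C(2^{m+1}N)^{d}(2^{m}N)^{-2s}$. Summing the geometric series in $m$, which converges because $2s>d$, gives $R_{N}\le CN^{d-2s}$. This yields the estimate with $\eta=0$, and the stated bound with any $\eta\in(0,s-d/2)$ follows since $N\ge1$. The loss $\eta$ only leaves room, for instance, to use a cruder counting bound of the form $\mathcal N(\lambda)\le C_{\eta}\lambda^{d+2\eta}$ if one prefers to avoid the sharp Weyl law. I expect the counting estimate to be the only genuine obstacle, and the heat-kernel on-diagonal bound $p_{t}(e,e)\le Ct^{-d/2}$ for $t\le1$ is the cleanest route to it.
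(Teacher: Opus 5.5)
Your proof is correct, but it takes a different route from the paper's.

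\textbf{The paper's route.} It sets $\sigma=d/2+\eta$ and reads off from \eqref{eq:Hs_PW} that $\|\Pi_N^{\perp}f\|_{H^{\sigma}}\le N^{-(s-\sigma)}\|f\|_{H^{s}}$, since the tail lives where $1+c_\pi\ge 1+N^{2}$. It then applies the Sobolev embedding $H^{\sigma}(G)\hookrightarrow L^{\infty}(G)$ as a black box. In that two-step argument the loss $\eta$ is unavoidable: the embedding fails at $\sigma=d/2$, and its constant degenerates as $\sigma\downarrow d/2$.

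\textbf{Your route.} You estimate the tail series pointwise, using Cauchy--Schwarz first in the matrix entries (with $\sum_{i,j}|\pi_{ij}(g)|^{2}=d_\pi$) and then in $\pi$. This reduces everything to the group-theoretic tail $\sum_{\sqrt{c_\pi}>N}d_\pi^{2}(1+c_\pi)^{-s}$. You bound that tail by the heat-trace count $\sum_{c_\pi\le\lambda^{2}}d_\pi^{2}\le e\,p_{1/\lambda^{2}}(e,e)\le C\lambda^{d}$ followed by a dyadic summation. In effect you re-prove the Sobolev embedding on $G$, since this Cauchy--Schwarz argument is its standard proof, but you apply it directly to the tail instead of first shifting the Sobolev index.

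\textbf{Comparison.} Your version buys the estimate with $\eta=0$, that is, decay $N^{-(s-d/2)}$ with a constant depending only on $G$ and $s$. The stated lemma then follows because $N\ge1$. The price is that you need the on-diagonal heat-kernel bound (or Weyl's law) as an explicit input, whereas the paper needs only the embedding. That input is already in the paper's toolkit: the same diagonal estimate $p_{1/N^{2}}(e,e)\le C_GN^{d}$ is used in the proof of \cref{prop:finite_rank_heat}.
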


\begin{proof}
Set $\sigma:=d/2+\eta\in(d/2,s)$. By \eqref{eq:Hs_PW}, and since $(1+c_{\pi})^{\sigma-s}\le(1+N^{2})^{\sigma-s}$ on the tail $\sqrt{c_{\pi}}>N$,
\begin{align*}
\norm{\Pi_{N}^{\perp}f}_{H^{\sigma}}^{2}
&=\sum_{\sqrt{c_{\pi}}>N}\sum_{i,j}(1+c_{\pi})^{\sigma-s}(1+c_{\pi})^{s}\bigl|\widehat f^{\pi}_{ij}\bigr|^{2}\\
&\le\bigl(1+N^{2}\bigr)^{-(s-\sigma)}\norm{f}_{H^{s}}^{2}
 \le N^{-2(s-\sigma)}\,\norm{f}_{H^{s}}^{2}.
\end{align*}
The Sobolev embedding $H^{\sigma}(G)\hookrightarrow L^{\infty}(G)$, valid since $\sigma>d/2$, then gives
\[
\norm{\Pi_{N}^{\perp}f}_{L^{\infty}}\le C\norm{\Pi_{N}^{\perp}f}_{H^{\sigma}}\le C\,N^{-(s-d/2-\eta)}\norm{f}_{H^{s}}.\qedhere
\]
\end{proof}

\begin{theorem}
\label{thm:sup_norm}
Under Assumption~\ref{ass:standing} and Assumption~\ref{ass:Hs}, fix $\eta\in(0,s-d/2)$. There
exists $C=C(G,s,\eta)>0$ independent of $k,N,K$ such that
\begin{equation}
\norm{\val^{N}_{k}-\val_{\delta}(\cdot,t_{k})}_{L^{\infty}(G)}\le C\,\frac{\Lambda_{N}^{K-k+1}-1}{\Lambda_{N}-1}\,N^{-(s-d/2-\eta)}\,M_{s},\ k=0,1,\dots,K,
\label{eq:sup_norm}
\end{equation}
with the convention $(\Lambda_{N}^{K-k+1}-1)/(\Lambda_{N}-1)=K-k+1$ when $\Lambda_{N}=1$.
\end{theorem}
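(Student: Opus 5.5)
The plan is to unroll the sup-norm recursion \eqref{eq:err_rec_sup} backward from $k=K$ and bound each consistency term uniformly with \cref{lem:per_step} and Assumption~\ref{ass:Hs}.

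First, the recursion itself must be justified. Taking $L^{\infty}$ norms in \eqref{eq:err_identity}, the first term is bounded by $\Lambda_{N}\norm{\mathcal{T}_{\delta}\val^{N}_{k+1}-\mathcal{T}_{\delta}\val_{\delta}(\cdot,t_{k+1})}_{L^{\infty}}$. This uses the definition of $\Lambda_{N}$ in \cref{lem:norm_mismatch}\,(iv). By \cref{lem:norm_mismatch}\,(ii), it is then at most $\Lambda_{N}\norm{E_{k+1}}_{L^{\infty}}$. One technical point must be checked: $\val^{N}_{k+1}$ is a finite trigonometric polynomial, hence continuous, and $\val_{\delta}(\cdot,t_{k+1})$ is continuous (even Lipschitz, by \cref{prop:Tdelta_properties}). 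So (ii) applies, and $\mathcal{T}_{\delta}$ maps both into $C(G)$, where $\Pi_{N}$ is well defined through $L^{2}$. This gives \eqref{eq:err_rec_sup} for $k=K-1,\dots,0$.

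Next, set $\rho:=C\,N^{-(s-d/2-\eta)}M_{s}$, with $C$ the constant of \cref{lem:per_step}. That lemma applied to $f=\val_{\delta}(\cdot,t_{k})$, together with \eqref{eq:Hs_hypothesis}, gives $\norm{\Pi_{N}^{\perp}\val_{\delta}(\cdot,t_{k})}_{L^{\infty}}\le\rho$ for every $k$. At the terminal step, $E_{K}=\Pi_{N}\phi-\phi=-\Pi_{N}^{\perp}\val_{\delta}(\cdot,t_{K})$, since $\val_{\delta}(\cdot,t_{K})=\phi$. Hence $\norm{E_{K}}_{L^{\infty}}\le\rho$ as well. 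This matches the top term of the geometric sum.

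Finally, a backward induction on $k$ shows $\norm{E_{k}}_{L^{\infty}}\le\rho\sum_{j=0}^{K-k}\Lambda_{N}^{j}$. The base case $k=K$ is the bound just established. For the inductive step, $\Lambda_{N}\rho\sum_{j=0}^{K-k-1}\Lambda_{N}^{j}+\rho=\rho\sum_{j=0}^{K-k}\Lambda_{N}^{j}$. Summing the geometric series gives $(\Lambda_{N}^{K-k+1}-1)/(\Lambda_{N}-1)$ when $\Lambda_{N}\neq1$, and $K-k+1$ when $\Lambda_{N}=1$. Note that $\Lambda_{N}\ge1$, since $\Pi_{N}$ fixes constants. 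This is exactly \eqref{eq:sup_norm}, and $C$ depends only on $G,s,\eta$.

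There is no real obstacle here; the argument is bookkeeping. The only step deserving care is the well-definedness and continuity issue in the first step, together with verifying that the terminal error fits the same $\Pi_{N}^{\perp}$ bound. Otherwise the constant would pick up an extra term.
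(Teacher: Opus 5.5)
Your proposal is correct and is essentially the paper's own proof. Both use the same recursion \eqref{eq:err_rec_sup}, the same per-step constant from \cref{lem:per_step} under Assumption~\ref{ass:Hs} (applied also at $k=K$, where $E_{K}=-\Pi_{N}^{\perp}\phi$), and the same downward induction summing the geometric series, with the $\Lambda_{N}=1$ case giving $K-k+1$. Your extra checks, namely continuity of the iterates so that \cref{lem:norm_mismatch}\,(ii) applies and $\Lambda_{N}\ge1$, are harmless additions that the paper leaves implicit.
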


\begin{proof}
Let us denote
$\alpha:=C N^{-(s-d/2-\eta)}M_{s} $
the per-step consistency constant from \cref{lem:per_step} applied to $\val_{\delta}(\cdot,t_{j})$
and valid uniformly in $j$ under \cref{ass:Hs}. We prove
$
\norm{E_{k}}_{L^{\infty}}\;\le\;\alpha\,\frac{\Lambda_{N}^{K-k+1}-1}{\Lambda_{N}-1}
$
by downward induction on $k$.

Since $\val^{N}_{K}=\Pi_{N}\phi$ and $\val_{\delta}(\cdot,t_{K})=\phi$, we have $E_{K}=-\Pi_{N}^{\perp}\phi$ and hence $\norm{E_{K}}_{L^{\infty}}=\norm{\Pi_{N}^{\perp}\phi}_{L^{\infty}}\le\alpha$ by Lemma~\ref{lem:per_step} applied at $k=K$.

Suppose $\norm{E_{k+1}}_{L^{\infty}}\le\alpha\,(\Lambda_{N}^{K-k}-1)/(\Lambda_{N}-1)$
for some $k\in\{0,\dots,K-1\}$. From \eqref{eq:err_identity},
\cref{lem:norm_mismatch}(ii) and (iv), and \cref{lem:per_step},
\[
\norm{E_{k}}_{L^{\infty}}\le  \Lambda_{N}\norm{E_{k+1}}_{L^{\infty}}+\norm{\Pi_{N}^{\perp}\val_{\delta}(\cdot,t_{k})}_{L^{\infty}}\le \Lambda_{N}\cdot\alpha\,\frac{\Lambda_{N}^{K-k}-1}{\Lambda_{N}-1}+\alpha.
\]
Combining over a common denominator closes the induction. The case $\Lambda_{N}=1$ is handled by
the trivial recursion
$\norm{E_{k}}_{L^{\infty}}\le\norm{E_{k+1}}_{L^{\infty}}+\alpha$.
\end{proof}

\begin{remark}
\label{rem:read_bound}
The bound \eqref{eq:sup_norm} splits into two factors, the per-step consistency error $N^{-(s-d/2-\eta)}M_{s}$ which is algebraic in $N$ with rate governed by the regularity $s$, and the stability amplification $(\Lambda_{N}^{K-k+1}-1)/(\Lambda_{N}-1)$ which is geometric in the time index.
By \eqref{eq:Lambda_growth}, on $\Sone$ the amplification grows like $(\log N)^{K-k}$, which is moderate at fixed $K$ but unbounded in the coupled limit $K\to\infty$. On a rank-one group of dimension $d\ge 2$ it grows polynomially in $N$, and \eqref{eq:sup_norm} then provides no useful estimate unless $K-k$ is very small or $N$ grows much faster than the time discretization requires.
\end{remark}

\subsection{Verifiability of Assumption~\ref{ass:Hs}}
\label{ssec:Hs_discussion}

Theorem~\ref{thm:sup_norm} reduces the convergence question to verifying Assumption~\ref{ass:Hs} from the data of the control problem. Three regimes are to be distinguished.
\begin{enumerate}[label=(\arabic*),leftmargin=2em]
\item \emph{Smooth, short-horizon regime.} Suppose, beyond Assumption~\ref{ass:standing}, that the data are regular enough for the Hamiltonian \eqref{eq:Hamiltonian} to be of class $C^{r}$ on $G\times\Lie$, which requires $C^{r}$ regularity of $\xi$ and $\ell$ together with a nondegenerate maximization in $u$, and that $\phi\in C^{r}(G)$. If $T$ is less than the first conjugate or caustic time $T^{\ast}$ of the Hamiltonian flow, the method of characteristics yields $\val_{\delta}(\cdot,t_{k})\in C^{r}(G)$ uniformly in $k$ (see, e.g., \cite{CannarsaSinestrari2004}), hence Assumption~\ref{ass:Hs} holds with $s=r$.
\item \emph{Generic horizon, semiconcave regime.} Under additional structural hypotheses (namely semiconcave terminal data, $C^{1,1}$ dependence of $\xi$ and $\ell$ on the state, and $\ell$ strictly convex in $u$), Lax-Oleinik and maximum-principle arguments yield uniform semiconcavity of $\val_{\delta}(\cdot,t_{k})$ in $k$ (see \cite{CannarsaSinestrari2004} for precise statements, noting that strict convexity in $u$ and $W^{2,\infty}$ terminal data alone are not sufficient in general). We present this regime as illustrative. A typical semiconcave function with codimension-one jump in $\nabla V$ across a smooth hypersurface lies in $H^{s}(G)$ for $s<3/2$ (and is generally not in $H^{3/2}$). To see this, consider the local model $V(x_{1},\dots,x_{d})=-|x_{1}|$ on the torus $\T^{d}$.
Its Fourier coefficients are $\widehat V_{(k_{1},\dots,k_{d})}\sim k_{1}^{-2}\,\delta_{k_{2}=\cdots=k_{d}=0}$
(with $k_{1}\ne 0$), and a direct computation gives $\norm{V}_{H^{s}(\T^{d})}^{2}=\sum_{k_{1}\ne 0}(1+k_{1}^{2})^{s}\,k_{1}^{-4}$, finite if and only if $s<3/2$.  The same threshold applies on a general compact Lie group $G$. In particular, any smooth shock hypersurface $\Sigma\subset G$ can be covered by a finite family of charts $(U_{\alpha},\varphi_{\alpha})$ in each of which $\Sigma$ is locally the zero set of a single coordinate function and the above $-|x_{1}|$ model applies.  A partition of unity subordinate to this cover then yields the global $H^{s}$ regularity exponent $s<3/2$. Consequently, Assumption~\ref{ass:Hs} is verifiable from semiconcavity on $\Sone$ (that is, $d/2=1/2<3/2$) but \emph{not} on $G$ of dimension $d\ge 3$ (i.e., $d/2\ge 3/2$). For value functions with singularities of codimension other than one, the threshold is correspondingly different and must be analyzed case by case.
\item \emph{Smooth regularized regime.} One may use the so-called vanishing viscosity regularization (see, e.g., \cite{CrandallLions1983,CrandallLions1984}), replacing $V$ by the solution $V^{\eps}$ of a viscous HJB equation. Under Assumption~\ref{ass:standing} alone, $V^{\eps}$ is merely a continuous viscosity solution. If, however, the data and the Hamiltonian are smooth enough on the relevant cotangent region, parabolic regularity gives $V^{\eps}(\cdot,t)\in H^{s}(G)$ uniformly in $t$ for the corresponding range of $s$, and Assumption~\ref{ass:Hs} holds for such $s$, with constant growing as $\eps^{-\alpha(s)}$ near shock singularities.
\end{enumerate}
The third regime holds in every dimension, at the cost of a controllable bias of order $\sqrt{\eps}$. It is the route followed in \cref{sec:stab}, and it is eventually superseded by the probabilistic argument of \cref{sec:markov}, which requires no regularization at all.

\section{Vanishing-viscosity regularization and filtered approximation}
\label{sec:stab}

Recall that the spectral scheme of \cref{sec:specapprox} is obstructed by the $L^{2}/L^{\infty}$ norm mismatch of Lemma~\ref{lem:norm_mismatch}. 
In this section, we resolve this obstruction by combining two techniques, designed so that each step of the resulting iteration retains the structural properties of dynamic programming, namely monotonicity and sup-norm non-expansiveness, from which a convergence analysis in the viscosity framework can again be built.

\noindent\textbf{(A) Vanishing-viscosity regularization.}
We replace the HJB equation \eqref{eq:HJB} (called inviscid HJB in the following) by its viscous
counterpart
\begin{equation}
-\partial_{t}V^{\eps}+\Ham\bigl(g,\nabla_{G}^{\mathrm{body}}V^{\eps}\bigr)
  -\eps\,\Lap_{G}V^{\eps}=0,
\quad V^{\eps}(\cdot,T)=\phi,
\label{eq:viscous_HJB}
\end{equation}
indexed by $\eps>0$, written in the orientation in which the standard viscosity comparison conventions apply, where $\Lap_{G}$ is normalized so that $-\Lap_{G}\ge 0$ and the term $-\eps\,\Lap_{G}V^{\eps}$ is diffusive for the terminal-value problem. For each $\eps>0$ this equation admits a unique viscosity solution $V^{\eps}$, which is smoother than $V$ and stays at distance $O(\sqrt{\eps T})$ from it in the supremum norm (\cref{thm:VV} below).

\noindent\textbf{(B) Filtered Peter-Weyl approximation.}
We replace the orthogonal projector $\Pi_{N}$ by a filtered approximation operator $\mathcal A_{N}$ that is bounded on $L^{\infty}(G)$ uniformly in the truncation and consistent on Sobolev and Lipschitz classes. This removes the $\Lambda_{N}$ amplification at every step.

The schemes of this section iterate one viscous IMEX Bellman step followed by the filter. Their convergence analysis is deterministic. It takes the form of a single recursion against the continuous viscous solution $V^{\eps}$, complemented by a compact Lie group counterpart of the Crandall-Lions vanishing-viscosity bias:
\begin{equation}
\|V^{N,\eps}_{\delta,k}-V(\cdot,t_{k})\|_{L^{\infty}}
\le
\underbrace{\|V^{N,\eps}_{\delta,k}-V^{\eps}(\cdot,t_{k})\|_{L^{\infty}}}_{\substack{\text{spectral + IMEX}}}
+\underbrace{\|V^{\eps}-V\|_{L^{\infty}}}_{\substack{\text{viscous bias}}} \ .
\label{eq:error_decomposition}
\end{equation}
This route requires no further regularity of the value function at all, the viscous step being itself a smoother, but it yields convergence without an explicit rate. With the implicit-Euler diffusion step it is moreover limited to dimension $d\le 3$, a restriction lifted in every dimension by the exact heat (exponential) step.
The present section provides the convergence theory for the heat-filtered scheme used in the experiments, makes precise what artificial viscosity and spectral filtering deliver on their own, and establishes the vanishing-viscosity estimate on a compact Lie group (\cref{thm:VV}).


\subsection{Vanishing-viscosity regularization}
\label{ssec:viscous_HJB}

With the sign convention $\Lap_{G}\pi_{ij}=-c_{\pi}\pi_{ij}$, so that $-\Lap_{G}\ge 0$, equation \eqref{eq:viscous_HJB} is a backward parabolic regularization of the HJB equation \eqref{eq:HJB}.
Under the time reversal $\tau=T-t$, $u^{\eps}(g,\tau):=V^{\eps}(g,T-\tau)$ satisfies \[\partial_{\tau}u^{\eps}=-\Ham(g,\nabla u^{\eps})+\eps\Lap_{G}u^{\eps}\ , \]
which is a forward semilinear parabolic problem.
Freezing the nonlinear transport term, the linear part of the flow is the heat semigroup $e^{\eps\tau\Lap_{G}}$, which acts diagonally in the Peter-Weyl basis and damps the $\pi$-mode by the factor $e^{-\eps c_{\pi}\tau}\in(0,1]$. This is the quantitative form of the smoothing effect of the regularization. An artificial viscosity implemented in this way, as a spectral multiplier, is the mechanism of the spectral vanishing-viscosity method for nonlinear conservation laws~\cite{Tadmor1989,KaramanosKarniadakis2000}, where it stabilizes the spectral discretization while preserving its accuracy. For Hamilton-Jacobi equations on the torus, spectral viscosity approximations were shown to converge to the viscosity solution in \cite{Lepsky2000}. The present setting differs in that the equation is posed on a noncommutative group, the scheme is a semi-Lagrangian iteration rather than a method of lines, and the analysis must control the interaction of the regularization with the dynamic programming structure.

\begin{theorem}[Vanishing-viscosity estimate]
\label{thm:VV}
Under Assumption~\ref{ass:standing}, for every $\eps>0$, the viscous HJB equation
\eqref{eq:viscous_HJB} admits a unique viscosity solution
$V^{\eps}\in C(G\times[0,T])$ with $V^{\eps}(\cdot,T)=\phi$, and $V^{\eps}(\cdot,t)$ is Lipschitz uniformly in $\eps$ and $t$, with
\begin{equation}
  \Lip\bigl(V^{\eps}(\cdot,t)\bigr)\le L:=e^{L_{\xi}T}\bigl(L_{\phi}+L_{\ell}T\bigr).
  \label{eq:VV_global_Lip_constant}
\end{equation}
Moreover, there exist constants $\alpha_{0},A_{T},B_{G}>0$, depending only
on the bounds and Lipschitz constants in Assumption~\ref{ass:standing}, the horizon
$T$, and the geometry of $G$, such that
\begin{equation}
  \|V^{\eps}-V\|_{L^{\infty}(G\times[0,T])}
  \le A_{T}\alpha+B_{G}\frac{\eps T}{\alpha}
  \quad(0<\alpha\le\alpha_{0}).
  \label{eq:VV_rate_alpha}
\end{equation}
Consequently, there is a constant $C_{\mathrm{VV}}>0$, with the same
dependence, such that, for every $0<\eps\le1$,
\begin{equation}
  \|V^{\eps}-V\|_{L^{\infty}(G\times[0,T])}
  \le C_{\mathrm{VV}}\sqrt{\eps T}.
  \label{eq:VV_rate}
\end{equation}
Equivalently, for a fixed horizon one may write the right-hand side as
$C_{T}\sqrt{\eps}$.

If, in addition,
\[
  V\in W^{2,\infty}(G\times(0,T))\cap C(G\times[0,T]),
\]
where the Sobolev space is understood in space-time local charts, then
\begin{equation}
  \|V^{\eps}-V\|_{L^{\infty}(G\times[0,T])}
  \le
  \eps T\|\Lap_{G}V\|_{L^{\infty}(G\times(0,T))}
  \le C_{G}\eps T\|V\|_{W^{2,\infty}(G\times(0,T))}.
  \label{eq:VV_smooth_rate}
\end{equation}
\end{theorem}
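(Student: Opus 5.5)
Existence and uniqueness come first, then the Lipschitz bound, then the comparison estimate via doubling of variables. For existence I would use the stochastic control representation. Let $B$ be a Brownian motion on $G$ generated by $\eps\Lap_{G}$, i.e.\ the diffusion with generator $\eps\sum_{j}\widetilde X_{j}^{2}$, built by solving the Stratonovich SDE $dg=(TL_{g})\bigl(\xi(g,u)\,dt+\sqrt{2\eps}\sum_{j}X_{j}\circ dW^{j}\bigr)$. Define $V^{\eps}$ as the value of the controlled diffusion problem with running cost $\ell$ and terminal cost $\phi$. The standard dynamic programming argument shows $V^{\eps}$ is a continuous viscosity solution of \eqref{eq:viscous_HJB}. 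Uniqueness follows from the parabolic comparison principle on a compact manifold: $\Ham$ is Lipschitz in $p$ and Lipschitz in $g$ with modulus $L_{\xi}|p|+L_{\ell}$, so the doubling argument of \cite{CrandallIshiiLions1992} applies in charts with the penalization $\dist_{G}^{2}/\alpha$. That penalization is smooth near the diagonal by the injectivity radius bound used in the remark after \cref{thm:DPP_convergence}.

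For the Lipschitz bound \eqref{eq:VV_global_Lip_constant} I would couple two controlled diffusions started at $g_{1},g_{2}$, driven by the same control and the same noise in the \emph{right-invariant} sense: $g_{i}(t)=h_{i}(t)$ with $dh_{i}=(TL_{h_{i}})\xi\,dt+(TR_{h_{i}})\sqrt{2\eps}\,dW$. Since the metric is bi-invariant, the right-invariant and left-invariant Brownian motions have the same law, both generated by $\Lap_{G}$. Because the metric is bi-invariant, right-multiplying by a common noise increment preserves $\dist_{G}(h_{1},h_{2})$ at the level of the noise. Hence $\frac{d}{dt}\dist_{G}(h_{1},h_{2})\le L_{\xi}\dist_{G}(h_{1},h_{2})$ pathwise, where the noise contribution vanishes; I would make this rigorous by writing $h_{2}=h_{1}k$ and noting that the common noise cancels in $k$. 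Gronwall then gives $e^{L_{\xi}T}(L_{\phi}+L_{\ell}T)$, uniformly in $\eps$.

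For \eqref{eq:VV_rate_alpha} I would run the doubling argument on $\Psi(g,h,t)=V^{\eps}(g,t)-V(h,t)-\dist_{G}^{2}(g,h)/(2\alpha)-\sigma(T-t)$, maximize, and treat the two signs symmetrically. The Lipschitz bound $L$ forces $\dist_{G}(\hat g,\hat h)\le L\alpha$ at the maximizer. For $\alpha\le\alpha_{0}$ this distance is within the injectivity radius, so the test function is smooth there. The difference of Hamiltonians at the maximum is bounded by $(L_{\xi}\dist^{2}/\alpha+L_{\ell}\dist)\lesssim\alpha$, where the two covectors are compared by parallel transport. \emph{This is the main obstacle}: on a noncommutative group the body-frame gradients of $\dist^{2}$ at $\hat g$ and $\hat h$ are not equal, but are related by $\Ad$-type transport. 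I would bound the discrepancy by a curvature term $O(\dist^{2}/\alpha)$, which is again $O(\alpha)$. The viscous term only sees $V^{\eps}$, and $V$ is first order, so I would freeze $h$ and use $\Lap_{G}\,\dist_{G}^{2}(\cdot,\hat h)/(2\alpha)\le B_{G}/\alpha$ near the diagonal, by Laplacian comparison with the curvature bound. This yields a contribution $\eps B_{G}/\alpha$ per unit time, and choosing $\sigma$ accordingly gives $A_{T}\alpha+B_{G}\eps T/\alpha$. The boundary terms at $t=T$ are $O(\alpha)$ by $L_{\phi}$. Optimizing with $\alpha=\sqrt{\eps T}$, which is admissible for $\eps\le1$ after enlarging constants, gives \eqref{eq:VV_rate}.

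For the smooth case, if $V\in W^{2,\infty}$ then $V\pm\eps(T-t)\|\Lap_{G}V\|_{\infty}$ is a super- or subsolution of \eqref{eq:viscous_HJB} in the viscosity sense. I would justify this by mollifying in charts, noting that $W^{2,\infty}$ functions are $C^{1,1}$ so the HJB holds a.e.\ and the test-function inequalities follow. Comparison then gives \eqref{eq:VV_smooth_rate}, and $\|\Lap_{G}V\|_{\infty}\le C_{G}\|V\|_{W^{2,\infty}}$ finishes the proof.
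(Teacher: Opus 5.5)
Your plan follows the paper's architecture: stochastic representation with a synchronous coupling for the $\eps$-uniform Lipschitz bound, doubling of variables with $\dist_{G}^{2}/(2\alpha)$ and a linear time penalty, optimization in $\alpha$, and a.e.\ sub/supersolutions plus comparison in the $W^{2,\infty}$ case. However, one step fails as written.

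\textbf{The gap: freezing $h$ with a single time variable.} At a maximum of $\Psi(g,h,t)$, the frozen function $(g,t)\mapsto V(\hat h,t)+\dist_{G}(g,\hat h)^{2}/(2\alpha)+\sigma(T-t)$ is only Lipschitz in $t$. It is therefore not an admissible test function for $V^{\eps}$ and gives you no access to $\partial_{t}$; the same problem arises for $V$ at $\hat h$.
\begin{itemize}
\item The paper handles the common time variable with the parabolic theorem of sums in its Riemannian form. That theorem links the two time derivatives and yields the trace bounds $\Tr X\le C_{D}/\alpha$ and $-\Tr Y\le C_{D}/\alpha$.
\item The repair closest to your sketch is to double time as well, subtracting $(t-s)^{2}/(2\beta)$. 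Freezing $(\hat h,\hat s)$ then gives a genuinely smooth test function for $V^{\eps}$, whose Laplacian is at most $C/\alpha$ near the diagonal. Freezing $(\hat g,\hat t)$ gives a smooth one for $V$. Since $\Ham$ is time independent, the two time derivatives differ exactly by $\sigma$, and letting $\beta\downarrow0$ at fixed $\eps,\alpha$ only affects the boundary cases.
\end{itemize}
Repaired this way, your route is more elementary than the paper's: only one of the two equations is second order, so no theorem of sums is needed. Two smaller fixes are also required. You must keep the maximum away from $t=0$, where the equation is not assumed; the paper adds a barrier that blows up at $\tau=T$. And the distance at the maximum is only bounded by $2L\alpha$, not $L\alpha$.

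\textbf{The obstacle you single out does not exist.} Differentiating $\dist_{G}(g\exp(sY),h\exp(sY))=\dist_{G}(g,h)$ at $s=0$ gives $(TL_{g})^{*}D_{g}\dist_{G}^{2}=-(TL_{h})^{*}D_{h}\dist_{G}^{2}$. So the body covectors of the two frozen test functions coincide exactly, and the $g$-modulus $(L_{\xi}\|p\|_{\Lie}+L_{\ell})\dist_{G}$ of $\Ham$ applies with one common $p$. This is precisely the group-specific identity the paper uses. Your parallel-transport bound is valid but superfluous. In body coordinates, transport along $g\exp(sZ)$ is $\Ad_{\exp(-sZ/2)}$, which costs at most $O(\dist_{G}\|p\|_{\Lie})=O(\dist_{G}^{2}/\alpha)$. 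Here it costs nothing, because $p$ is a multiple of $Z$.

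\textbf{The remaining deviations are sound.}
\begin{itemize}
\item Your coupling through $k=h_{1}^{-1}h_{2}$ cancels the noise exactly; note that $(TR_{h})$-noise acts by left multiplication. Hence $\dist_{G}(h_{1},h_{2})=\dist_{G}(e,k)$ is pathwise Lipschitz in time. Its drift splits into an infinitesimal conjugation, which preserves $\dist_{G}(e,\cdot)$ by bi-invariance, and a term of size $\|\xi(h_{2},u)-\xi(h_{1},u)\|_{\Lie}$. This avoids the Calabi barrier the paper invokes.
\item In the $W^{2,\infty}$ case, mollification in charts works in both directions. Because $\nabla V$ is Lipschitz, the commutator errors with $\Ham$ and with $\Lap_{G}$ vanish with the mollification radius. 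The paper uses Jensen's maximum principle instead.
\end{itemize}
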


\begin{remark}
The bound \eqref{eq:VV_rate} is an analogue of the classical vanishing-viscosity estimate of Crandall and Lions \cite{CrandallLions1984}, established there on $\R^{n}$. We are not aware of a reference stating it on a compact Lie group, and we therefore give a complete proof in \cref{app:VV_proof}. The transplant of the Euclidean argument is intrinsic and rests on two group-specific ingredients. The uniform Lipschitz bound \eqref{eq:VV_global_Lip_constant} is obtained from the stochastic representation of $V^{\eps}$ by a synchronous coupling of two controlled diffusions, driven by a common noise that acts by right translations and is therefore isometric for the bi-invariant metric. In the doubling-of-variables comparison, the bi-invariance of the metric makes the two body covectors generated by the penalization $\dist_{G}^{2}/(2\alpha)$ coincide, so that only the $G$-modulus of the Hamiltonian enters, while $\dist_{G}^{2}$ is smooth on the uniform diagonal neighbourhood $\mathcal U_{\iota}:=\{(g,h)\in G\times G:\dist_{G}(g,h)<\iota\}$, where $\iota>0$ is the injectivity radius, positive by compactness. All constants depend only on the data of Assumption~\ref{ass:standing}, the horizon $T$, and curvature and injectivity-radius bounds of $(G,\dist_{G})$.
\end{remark}

\subsection{IMEX one-step operators and the time-discrete viscous value function}
\label{ssec:Veps_delta}

We split the viscous HJB equation by performing one inviscid Bellman step followed by one diffusion step. The diffusion step may be realized implicitly, by one backward-Euler resolvent, or exactly, by the heat semigroup, giving the two \emph{viscous IMEX semi-Lagrangian operators}
\begin{equation}
\mathcal T^{\eps}_{\delta}:=(I-\delta\eps\Lap_{G})^{-1}\,\mathcal T_{\delta},
\quad
\mathcal E^{\eps}_{\delta}:=e^{\delta\eps\Lap_{G}}\,\mathcal T_{\delta},
\label{eq:Teps_def}
\end{equation}
both mapping $C(G)\to C(G)$. In the representation spectral setting the two diffusion factors are the diagonal Peter-Weyl multipliers $(1+\delta\eps c_{\pi})^{-1}$ and $e^{-\delta\eps c_{\pi}}$, of identical cost.
Notice that the resolvent $\mathcal T^{\eps}_{\delta}$, requiring only a single elliptic solve, generalises to non-spectral discretizations.
We state the results of this subsection for $\mathcal T^{\eps}_{\delta}$, and they hold verbatim for $\mathcal E^{\eps}_{\delta}$.
The proofs below derive the sup-norm non-expansiveness and monotonicity of $\mathcal T^{\eps}_{\delta}$ from the Markov properties of the heat semigroup, which $\mathcal E^{\eps}_{\delta}$ inherits directly.

\begin{lemma}
\label{lem:resolvent_Linf}
For every $a>0$,
$(I-a\Lap_{G})^{-1}:L^{\infty}(G)\to L^{\infty}(G)$ is well defined
and satisfies
\begin{equation}
\|(I-a\Lap_{G})^{-1}f\|_{L^{\infty}(G)}\;\le\;\|f\|_{L^{\infty}(G)}
\quad\forall f\in L^{\infty}(G).
\label{eq:resolvent_bound}
\end{equation}
\end{lemma}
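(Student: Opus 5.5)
The plan is to represent the resolvent as the Laplace transform of the heat semigroup and to inherit the bound from the Markov properties of $e^{s\Lap_{G}}$, as the paragraph preceding the lemma anticipates. For $a>0$ I would define, for $f\in L^{\infty}(G)\subset L^{2}(G)$,
\[
R_{a}f:=\int_{0}^{\infty}e^{-t}\,e^{ta\Lap_{G}}f\,\mathrm{d}t .
\]
On each Peter-Weyl isotypic component the heat semigroup acts by $e^{-tac_{\pi}}$ (\cref{thm:PW}), and $\int_{0}^{\infty}e^{-t}e^{-tac_{\pi}}\,\mathrm{d}t=(1+ac_{\pi})^{-1}$. Hence $R_{a}$ coincides on $L^{2}(G)$ with the spectral-calculus resolvent $(I-a\Lap_{G})^{-1}$, which is bounded on $L^{2}$ with norm $\le1$ since $-\Lap_{G}\ge0$. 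In particular the operator is well defined on $L^{\infty}(G)$, viewed as a subspace of $L^{2}(G)$ because $\mu(G)=1$, and it has range in $H^{2}(G)$.

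The key ingredient is the sup-norm contractivity of the heat semigroup. On a compact Riemannian manifold, $e^{s\Lap_{G}}f(g)=\int_{G}p_{s}(g,h)f(h)\,\mathrm{d}\mu(h)$, where the heat kernel satisfies $p_{s}>0$ and $\int_{G}p_{s}(g,h)\,\mathrm{d}\mu(h)=1$. On $G$ the kernel is moreover a convolution kernel, $p_{s}(g,h)=k_{s}(g^{-1}h)$, with
\[
k_{s}=\sum_{\pi}d_{\pi}e^{-sc_{\pi}}\chi_{\pi}.
\]
Mass one follows because only the trivial representation survives integration, since $\int_{G}\chi_{\pi}\,\mathrm{d}\mu=0$ for $\pi\ne1$. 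Positivity follows from the parabolic maximum principle: the semigroup preserves nonnegativity of smooth data, and one extends by density. Given these facts, $|e^{s\Lap_{G}}f(g)|\le\|f\|_{L^{\infty}}$ for every $g$, and Minkowski's integral inequality applied to the Bochner integral then gives
\[
\|R_{a}f\|_{L^{\infty}}\le\int_{0}^{\infty}e^{-t}\|f\|_{L^{\infty}}\,\mathrm{d}t=\|f\|_{L^{\infty}} .
\]
The same representation also shows that $R_{a}$ is positivity preserving, which is what the monotonicity statements later in the subsection need.

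The main obstacle is the justification of the heat-kernel positivity and the measurability and integrability of $t\mapsto e^{ta\Lap_{G}}f$ in the sup norm. To avoid the Bochner integral, I would work pointwise almost everywhere, using Fubini with the kernel representation, or else prove the estimate for $f\in C^{\infty}(G)$ first and extend by $L^{2}$-density together with weak-$*$ lower semicontinuity of the $L^{\infty}$ norm.

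An alternative and more elementary route avoids the heat kernel altogether. For smooth $f$, put $u=(I-a\Lap_{G})^{-1}f\in C^{\infty}(G)$ by elliptic regularity. At a maximum point $g_{0}$ of $u$ one has $\Lap_{G}u(g_{0})\le0$, so $u(g_{0})\le f(g_{0})\le\|f\|_{\infty}$, and the minimum is handled symmetrically. The general case then follows by mollifying $f$ by convolution with an approximate identity on $G$, which does not increase $\|f\|_{\infty}$, and passing to the limit in $L^{2}$ and then almost everywhere along a subsequence. I would present this maximum-principle argument as the main proof, since it relies only on compactness of $G$ and smoothness, and mention the Laplace-transform representation to record the Markov (positivity) property.
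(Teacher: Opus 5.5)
Your proposal is correct, and its first half is the paper's proof. The paper writes $(I-a\Lap_{G})^{-1}f=\int_{0}^{\infty}e^{-r}e^{ra\Lap_{G}}f\,\mathrm dr$ and integrates the Markov bound $\|e^{t\Lap_{G}}\|_{L^{\infty}\to L^{\infty}}=1$, which it simply quotes. You additionally justify that bound (character expansion of the kernel, unit mass, positivity). You also flag a measurability issue, which is in fact harmless: for $t>0$ the map $t\mapsto e^{ta\Lap_{G}}f$ is continuous into $C(G)$, with sup norm bounded by $\|f\|_{L^{\infty}}$.

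The maximum-principle argument you would make the main proof is a genuinely different and equally valid route. Smoothness of $u=(I-a\Lap_{G})^{-1}f$ for smooth $f$ follows directly from the multiplier $(1+ac_{\pi})^{-1}$ and Sobolev embedding. At a maximum $g_{0}$ one has $\Lap_{G}u(g_{0})=\sum_{j}\partial_{s}^{2}u(g_{0}\exp(sX_{j}))|_{s=0}\le0$, so no general elliptic theory is needed. This route dispenses with the heat kernel, whose positivity is itself a maximum-principle fact, at the price of a routine mollification and almost-everywhere subsequence step. It also transfers verbatim to any operator obeying a discrete maximum principle, which fits the paper's remark that the resolvent step generalizes to non-spectral discretizations. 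Moreover, it yields $\inf f\le u\le\sup f$ directly, hence positivity and monotonicity, so you do not need the Laplace representation even to record the Markov property.

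What the paper's representation buys is economy. Exhibiting the resolvent as an average of heat operators gives \cref{lem:Teps_monotone} and the parallel treatment of the exponential step $\mathcal E^{\eps}_{\delta}$ with no additional argument.
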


\begin{proof}
Since $-\Lap_{G}\ge 0$ is non-negative self-adjoint, the Bochner formula gives
\begin{equation}
(I-a\Lap_{G})^{-1}f
=\int_{0}^{\infty}e^{-r}\,e^{ra\Lap_{G}}f\,\mathrm dr \ .
\label{eq:resolvent_Bochner}
\end{equation}
On a compact Riemannian manifold the heat semigroup is Markov, so that
\[
\|e^{t\Lap_{G}}\|_{L^{\infty}\to L^{\infty}}=1,\quad t\ge 0.
\]
Integrating against $e^{-r}\,\mathrm dr$ yields \eqref{eq:resolvent_bound}.
\end{proof}

\begin{corollary}
\label{prop:Teps_stab}
Under Assumption~\ref{ass:standing}, for every $\delta\ge 0$ and every
$\eps\ge 0$,
\begin{equation}
\|\mathcal T^{\eps}_{\delta}f-\mathcal T^{\eps}_{\delta}g\|_{L^{\infty}(G)}\;\le\;\|f-g\|_{L^{\infty}(G)}
\quad\forall f,g\in C(G).
\label{eq:Teps_nonexp}
\end{equation}
\end{corollary}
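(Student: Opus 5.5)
The corollary is a composition of two non-expansive maps. By definition \eqref{eq:Teps_def}, $\mathcal T^{\eps}_{\delta}=(I-\delta\eps\Lap_{G})^{-1}\circ\mathcal T_{\delta}$. The outer factor is linear, so for $f,g\in C(G)$ we can write
\[
\mathcal T^{\eps}_{\delta}f-\mathcal T^{\eps}_{\delta}g=(I-\delta\eps\Lap_{G})^{-1}\bigl(\mathcal T_{\delta}f-\mathcal T_{\delta}g\bigr).
\]

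The first step treats the degenerate case $\delta\eps=0$ separately. There the resolvent is the identity, and the claim reduces directly to \cref{prop:Tdelta_properties} (equivalently, \cref{lem:norm_mismatch}(ii)). This already covers $\delta=0$, where $\mathcal T_{0}$ is the identity on $C(G)$.

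In the case $a:=\delta\eps>0$, the argument runs in three moves:
\begin{enumerate}[label=(\alph*),leftmargin=2em]
\item The difference $h:=\mathcal T_{\delta}f-\mathcal T_{\delta}g$ is a continuous, hence bounded, function. Indeed, $\mathcal T_{\delta}$ maps $C(G)$ to $C(G)$: under \cref{ass:standing}, the infimum over the compact set $U$ of a function jointly continuous in $(g,u)$ is continuous.
\item Applying \cref{lem:resolvent_Linf} with this $a$ gives $\|(I-a\Lap_{G})^{-1}h\|_{L^{\infty}}\le\|h\|_{L^{\infty}}$.
\item \cref{prop:Tdelta_properties} then gives $\|h\|_{L^{\infty}}\le\|f-g\|_{L^{\infty}}$, and chaining the two inequalities yields \eqref{eq:Teps_nonexp}.
\end{enumerate}

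There is no substantive obstacle. The one point to watch is that linearity of the resolvent is what lets us pass the difference inside; the Bellman operator itself is nonlinear, so this ordering of the composition matters.

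It is also worth recording, for later use and for the statement about $\mathcal E^{\eps}_{\delta}$, that the Bochner representation \eqref{eq:resolvent_Bochner} writes the resolvent as an average of Markov operators. The resolvent is therefore positivity preserving and fixes constants, so $\mathcal T^{\eps}_{\delta}$ is also monotone and additively homogeneous. The same three-step argument applies verbatim to $\mathcal E^{\eps}_{\delta}$, using $\|e^{\delta\eps\Lap_{G}}\|_{L^{\infty}\to L^{\infty}}=1$ in place of \cref{lem:resolvent_Linf}.
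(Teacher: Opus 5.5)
Your proof is correct and matches what the paper intends: the corollary is stated without proof right after \cref{lem:resolvent_Linf}, and it follows by passing the difference through the linear resolvent and chaining its $L^\infty$ contraction with the non-expansiveness of $\mathcal T_\delta$ from \cref{prop:Tdelta_properties}. Your separate treatment of the case $\delta\eps=0$ and your check that $\mathcal T_\delta$ maps $C(G)$ into $C(G)$ are harmless additions that the paper leaves implicit.
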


Moreover, the IMEX operator is also \emph{monotone}.

\begin{lemma}
\label{lem:Teps_monotone}
Under Assumption~\ref{ass:standing}, $\mathcal T^{\eps}_{\delta}$ is monotone, i.e.,  $f\le g$ pointwise implies $\mathcal T^{\eps}_{\delta}f\le\mathcal T^{\eps}_{\delta}g$ pointwise, for all $f,g\in C(G)$.
\end{lemma}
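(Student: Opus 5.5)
The operator $\mathcal T^{\eps}_{\delta}=(I-\delta\eps\Lap_{G})^{-1}\mathcal T_{\delta}$ is a composition of two maps, and I will show that each is monotone. Monotonicity of $\mathcal T_{\delta}$ is already part of \cref{prop:Tdelta_properties}. It also follows directly from \eqref{eq:Bellman}: if $f\le g$ pointwise, then for every $g_{0}\in G$ and $u\in U$,
\[
\delta\ell(g_{0},u)+f(\Phi^{u}_{\delta}(g_{0}))\le\delta\ell(g_{0},u)+g(\Phi^{u}_{\delta}(g_{0})),
\]
and taking the infimum over $u$ preserves the inequality. It therefore remains to show that the linear resolvent $R_{a}:=(I-a\Lap_{G})^{-1}$, with $a=\delta\eps$, is positivity preserving on $L^{\infty}(G)$. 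For a linear map, monotonicity is the same as positivity, since $R_{a}g-R_{a}f=R_{a}(g-f)$.

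For positivity I would reuse the Bochner representation \eqref{eq:resolvent_Bochner} from the proof of \cref{lem:resolvent_Linf}:
\[
R_{a}h=\int_{0}^{\infty}e^{-r}e^{ra\Lap_{G}}h\,\mathrm dr .
\]
On a compact Riemannian manifold the heat semigroup is Markov. Concretely, $e^{t\Lap_{G}}h(x)=\int_{G}p_{t}(x,y)h(y)\,\mathrm d\mu(y)$ with a heat kernel $p_{t}\ge0$, which on $G$ is a convolution kernel $p_{t}(y^{-1}x)$. This is the same Markov property used in \cref{lem:resolvent_Linf}, now used for positivity rather than for the norm bound. So $h\ge0$ implies $e^{ra\Lap_{G}}h\ge0$ for every $r\ge0$, and integrating against the positive weight $e^{-r}$ gives $R_{a}h\ge0$. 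The degenerate cases $a=0$ (that is, $\delta=0$ or $\eps=0$) are trivial, since then $R_{a}=I$.

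The only technical point, which I expect to be the main obstacle though a mild one, is that the Bochner integral gives an $L^{2}$ or $L^{\infty}$-a.e.\ statement, while the lemma is stated pointwise. I would handle this by noting that $h:=\mathcal T_{\delta}g-\mathcal T_{\delta}f$ is continuous and that $R_{a}h$ is continuous, since the resolvent of an elliptic operator maps $C(G)$ into $C(G)$. An a.e.\ nonnegative continuous function is nonnegative everywhere, which gives the pointwise claim.

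An alternative route avoids the heat kernel entirely and uses a maximum principle. If $w=R_{a}h$ is smooth, then $w-a\Lap_{G}w=h\ge0$. At a minimum point $x_{0}$ of $w$ we have $\Lap_{G}w(x_{0})\ge0$, hence $w(x_{0})\ge h(x_{0})\ge0$. One then passes to merely continuous $h$ by smoothing $h$ with the positive heat semigroup itself and taking a limit. I would present the heat-kernel argument as the main proof and mention the maximum principle as a consistency check. The same argument applies verbatim to $\mathcal E^{\eps}_{\delta}$, because $e^{\delta\eps\Lap_{G}}$ is itself positive.
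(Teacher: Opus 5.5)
Your proposal is correct and follows essentially the same argument as the paper: $\mathcal T_{\delta}$ is monotone as a pointwise infimum of order-preserving maps, and the resolvent is positivity preserving because the Bochner representation writes it as an $e^{-r}\,\mathrm dr$-average of Markov heat operators, so the composition is monotone. Your remarks on upgrading the a.e.\ inequality to a pointwise one via continuity, and the maximum-principle cross-check, are sound additions that the paper leaves implicit.
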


\begin{proof}
The Bellman operator $\mathcal T_{\delta}$ is monotone, being a pointwise infimum of the order-preserving maps $f\mapsto\delta\,\ell(\cdot,u)+f(\Phi^{u}_{\delta}(\cdot))$ (\cref{prop:Tdelta_properties}). The resolvent $(I-a \Lap_G)^{-1}$ is order-preserving, since by the Bochner representation \eqref{eq:resolvent_Bochner} it is an average of the heat operators $e^{ra\Lap_{G}}$, each of which is positivity-preserving by the Markov property. The composition of two monotone maps is monotone.
\end{proof}

Let us now consider the time discretization scheme, where the \emph{time-discrete viscous value function}
$V^{\eps}_{\delta}:G\times\{t_{k}\}_{k=0}^{K}\to\R$ is defined by
$V^{\eps}_{\delta}(\cdot,t_{K})=\phi$ and
\begin{equation}
V^{\eps}_{\delta}(\cdot,t_{k})=\mathcal T^{\eps}_{\delta}\,V^{\eps}_{\delta}(\cdot,t_{k+1}),
\quad k=K-1,\dots,0.
\label{eq:Veps_delta}
\end{equation}

Replacing $\mathcal T^{\eps}_{\delta}$ by $\mathcal E^{\eps}_{\delta}$ in this recursion gives the exponential variant of the scheme.

\subsection{Filtered Peter-Weyl approximation, and viscous filtered spectral scheme}
\label{ssec:filter}

We now define the filter that replaces the orthogonal projector $\Pi_{N}$. It is the heat semigroup at time $N^{-2}$,
\begin{equation}
\Pi_{N}^{\mathrm{heat}}:=e^{\Lap_{G}/N^{2}},
\label{eq:filter_def}
\end{equation}
the central convolution with the heat kernel $p_{1/N^{2}}$, where $p_{a}$ denotes the integral kernel of $e^{a\Lap_{G}}$, a probability density on $G$. On the Peter-Weyl decomposition it acts as the diagonal multiplier $e^{-c_{\pi}/N^{2}}$, so its application costs one blockwise scaling of the coefficients. The following properties drive the convergence analysis.

\begin{lemma}
\label{lem:filter_heat}
Let $N\ge1$.
\begin{enumerate}[label=\textnormal{(\roman*)},leftmargin=2em]
\item $\Pi_{N}^{\mathrm{heat}}$ is a Markov operator, in the sense that it is positivity preserving, it preserves constants, it is monotone, and for all $f,h\in C(G)$,
\begin{equation}
\|\Pi_{N}^{\mathrm{heat}}f-\Pi_{N}^{\mathrm{heat}}h\|_{L^{\infty}(G)}\le\|f-h\|_{L^{\infty}(G)},
\quad
\Lip(\Pi_{N}^{\mathrm{heat}}f)\le\Lip(f).
\label{eq:filter_Linfty_stab}
\end{equation}
\item For every $f\in\Lip(G)$,
\begin{equation}
\|f-\Pi_{N}^{\mathrm{heat}}f\|_{L^{\infty}(G)}\le\sqrt{C_{h}}\,\Lip(f)\,N^{-1},
\label{eq:filter_lip}
\end{equation}
where $C_{h}=C_{h}(G)$ is the small-time second-moment constant of the heat kernel, so that $\int_{G}p_{a}(g,h)\,\dist_{G}(g,h)^{2}\,\mathrm d\mu(h)\le C_{h}\,a$ for $a\in(0,1]$ (see, e.g., \cite{Grigoryan2009}).
\item For every $s>d/2$ and $\eta\in(0,s-d/2)$ there is $C=C(G,s,\eta)>0$ such that, for every $f\in H^{s}(G)$,
\begin{equation}
\|f-\Pi_{N}^{\mathrm{heat}}f\|_{L^{\infty}(G)}\le C\,N^{-\beta_{h}(s,\eta)}\,\|f\|_{H^{s}(G)},
\quad
\beta_{h}(s,\eta):=\min\{s-d/2-\eta,\,2\}.
\label{eq:filter_approx}
\end{equation}
\end{enumerate}
\end{lemma}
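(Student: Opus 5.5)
The proof rests on writing the filter as a convolution with the heat kernel: $\Pi_{N}^{\mathrm{heat}}f(g)=\int_{G}p_{a}(g,h)f(h)\,\mathrm d\mu(h)$ with $a=N^{-2}$, where $p_{a}\ge0$ and $\int_{G}p_{a}(g,h)\,\mathrm d\mu(h)=1$. This representation is the Markov property of the heat semigroup on a compact manifold, already used in \cref{lem:resolvent_Linf}. On a group, the bi-invariance of the metric makes $\Lap_{G}$ commute with both left and right translations. Hence $p_{a}(g,h)=q_{a}(g^{-1}h)$ for a central probability density $q_{a}$, so that $\Pi_{N}^{\mathrm{heat}}f(g)=\int_{G}q_{a}(x)f(gx)\,\mathrm d\mu(x)$.

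For (i), the order properties are immediate from this representation. Positivity and monotonicity follow from $q_{a}\ge0$, preservation of constants from unit mass, and the sup-norm bound from
\[
|\Pi_{N}^{\mathrm{heat}}(f-h)(g)|\le\int q_{a}|f-h|\le\|f-h\|_{\infty}.
\]
For the Lipschitz bound I use left invariance of $\dist_{G}$:
\[
|\Pi_{N}^{\mathrm{heat}}f(g)-\Pi_{N}^{\mathrm{heat}}f(g')|\le\int q_{a}(x)\,|f(gx)-f(g'x)|\,\mathrm d\mu(x)\le\Lip(f)\,\dist_{G}(gx,g'x)=\Lip(f)\,\dist_{G}(g,g').
\]
This translation-invariance step is where the group structure is genuinely used; on a general manifold it would require a Bakry--Émery curvature argument.

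For (ii), unit mass lets me write
\[
f(g)-\Pi_{N}^{\mathrm{heat}}f(g)=\int p_{a}(g,h)\bigl(f(g)-f(h)\bigr)\,\mathrm d\mu(h),
\]
so the error is bounded by $\Lip(f)\int p_{a}(g,h)\,\dist_{G}(g,h)\,\mathrm d\mu(h)$. Cauchy--Schwarz against the probability measure $p_{a}(g,\cdot)\,\mu$ and the stated second-moment bound then give at most $\Lip(f)\sqrt{C_{h}a}=\sqrt{C_{h}}\,\Lip(f)N^{-1}$, since $a=N^{-2}\le1$. I take the moment bound as given from the cited heat-kernel literature. If a justification were wanted, it could be obtained by applying the heat equation to the smooth function $\dist_{G}^{2}$ near the diagonal and using boundedness of $\dist_{G}$ elsewhere.

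Part (iii) is the main obstacle, because the multiplier $1-e^{-c_{\pi}/N^{2}}$ saturates at order $N^{-2}$; this is the cap $2$ in $\beta_{h}$. Set $\sigma=d/2+\eta$ and use the embedding $H^{\sigma}\hookrightarrow L^{\infty}$ as in \cref{lem:per_step}. By \eqref{eq:Hs_PW},
\[
\|f-\Pi_{N}^{\mathrm{heat}}f\|_{H^{\sigma}}^{2}=\sum_{\pi}\bigl(1-e^{-c_{\pi}/N^{2}}\bigr)^{2}(1+c_{\pi})^{\sigma-s}(1+c_{\pi})^{s}\,|\widehat f^{\pi}|^{2}.
\]
The task is to bound $\sup_{c\ge0}(1-e^{-c/N^{2}})(1+c)^{-(s-\sigma)/2}$ by $CN^{-\beta}$ with $\beta=\min\{s-\sigma,2\}$, using the elementary inequality $1-e^{-y}\le\min\{1,y\}\le y^{\theta}$ for $\theta\in[0,1]$. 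Choose $\theta=\min\{(s-\sigma)/2,1\}$. Then
\[
(1-e^{-c/N^{2}})(1+c)^{-(s-\sigma)/2}\le c^{\theta}N^{-2\theta}(1+c)^{-(s-\sigma)/2}\le N^{-2\theta},
\]
since $\theta\le(s-\sigma)/2$, and $2\theta=\beta$. Squaring, summing and applying the embedding gives \eqref{eq:filter_approx} with $C$ equal to the embedding constant.
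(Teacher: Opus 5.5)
Your proposal is correct and follows the paper's proof essentially line by line: convolution with the heat kernel for the Markov and Lipschitz properties in (i), Cauchy--Schwarz against the second-moment bound for (ii), and for (iii) the multiplier estimate $1-e^{-c/N^{2}}\le\min\{1,c/N^{2}\}\le(1+c)^{\theta}N^{-2\theta}$ with $\theta=\min\{(s-\sigma)/2,1\}$, followed by the embedding $H^{\sigma}\hookrightarrow L^{\infty}$. One small slip: the identity $\dist_{G}(gx,g'x)=\dist_{G}(g,g')$ is right-invariance of the metric, not left invariance, which is harmless here because the metric is bi-invariant.
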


\begin{proof}
Positivity, constant preservation, monotonicity and the sup-norm bound in (i) are immediate for the convolution with a probability density, and the Lipschitz bound follows from the bi-invariance of the metric, since
\[
|\Pi_{N}^{\mathrm{heat}}f(g)-\Pi_{N}^{\mathrm{heat}}f(h)|\le\int_{G}p_{1/N^{2}}(e,z)\,|f(gz)-f(hz)|\,\mathrm d\mu(z)\le\Lip(f)\,\dist_{G}(g,h).
\]
For (ii), the Cauchy-Schwarz inequality gives
\[
|f(g)-\Pi_{N}^{\mathrm{heat}}f(g)|\le\Lip(f)\int_{G}p_{1/N^{2}}(g,h)\,\dist_{G}(g,h)\,\mathrm d\mu(h)\le\sqrt{C_{h}}\,\Lip(f)\,N^{-1}.
\]
For (iii), set $\sigma:=d/2+\eta$ and $\theta:=\min\{(s-\sigma)/2,\,1\}$. On the $\pi$-block the multiplier of $I-\Pi_{N}^{\mathrm{heat}}$ is $1-e^{-c_{\pi}/N^{2}}\le\min\{1,\,c_{\pi}/N^{2}\}\le(1+c_{\pi})^{\theta}N^{-2\theta}$, so by \eqref{eq:Hs_PW} and since $\sigma+2\theta\le s$,
\[
\|f-\Pi_{N}^{\mathrm{heat}}f\|_{H^{\sigma}(G)}\le N^{-2\theta}\,\|f\|_{H^{s}(G)},
\quad 2\theta=\beta_{h}(s,\eta).
\]
The Sobolev embedding $H^{\sigma}(G)\hookrightarrow L^{\infty}(G)$, valid since $\sigma>d/2$, concludes the proof.
\end{proof}

The operator $\Pi_{N}^{\mathrm{heat}}$ is not finite-rank, 
as its multiplier $e^{-c_{\pi}/N^{2}}$ is strictly positive on the whole dual $\widehat G$. 
Classical finite-rank filters of de la Vall\'ee-Poussin type, built from a smooth window equal to one on $[0,1]$ and supported in $[0,2]$, would avoid this defect, and they satisfy the approximation properties (ii) and (iii) of \cref{lem:filter_heat} even with the unsaturated exponent $s-d/2-\eta$ (see, e.g., \cite{FilbirMhaskar2010}). 
However, their $L^{\infty}$ operator norm generically exceeds one, and any constant above one is amplified geometrically over the $K=T/\delta$ steps of the iteration. 
We therefore retain the heat filter and implement it by a side truncation $\Pi_{M}\,e^{\Lap_{G}/N^{2}}$ at $M=M(N)\ge N$ chosen so that the tail beyond $X_{M}$ is negligible compared with the spectral term. 
The next proposition quantifies this truncation.

\begin{proposition}
\label{prop:finite_rank_heat}
For $N\ge1$ and $M\ge N$, set
\[
H_{N}:=e^{\Lap_{G}/N^{2}},
\quad
A_{N,M}:=\Pi_{M}H_{N}=H_{N}\Pi_{M},
\quad
R_{N,M}:=H_{N}-A_{N,M}.
\]
Then $A_{N,M}$ has range in the finite-dimensional space $X_{M}$. Moreover, there is a constant $C_{G}>0$ such that
\begin{equation}
\eta_{N,M}:=\|R_{N,M}\|_{L^{\infty}(G)\to L^{\infty}(G)}
\le C_{G}N^{d/2}\exp\!\left(-\frac{M^{2}}{2N^{2}}\right).
\label{eq:finite_heat_tail}
\end{equation}
Consequently,
\begin{equation}
\|A_{N,M}\|_{L^{\infty}\to L^{\infty}}\le 1+\eta_{N,M}.
\label{eq:finite_heat_stability}
\end{equation}
For every $f\in\Lip(G)$,
\begin{equation}
\|f-A_{N,M}f\|_{L^{\infty}}
\le \sqrt{C_{h}}\,\Lip(f)N^{-1}+\eta_{N,M}\|f\|_{L^{\infty}},
\label{eq:finite_heat_lip}
\end{equation}
and, for every $s>d/2$ and $\eta_{0}\in(0,s-d/2)$,
\begin{equation}
\|f-A_{N,M}f\|_{L^{\infty}}
\le C(G,s,\eta_{0})N^{-\beta_{h}(s,\eta_{0})}\|f\|_{H^{s}}
+\eta_{N,M}\|f\|_{L^{\infty}},
\label{eq:finite_heat_Hs}
\end{equation}
where $\beta_{h}(s,\eta_{0}):=\min\{s-d/2-\eta_{0},\,2\}$.
\end{proposition}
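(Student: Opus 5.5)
The plan is to write $R_{N,M}$ as a convolution operator with an explicit kernel and bound its $L^{1}$ norm. Since $H_{N}$ and $\Pi_{M}$ are both central multipliers, acting on the $\pi$-block by $e^{-c_{\pi}/N^{2}}$ and $\mathbf 1_{\{\sqrt{c_\pi}\le M\}}$, they commute. The range of $A_{N,M}$ lies in $X_{M}$, which is finite-dimensional because only finitely many $\pi$ satisfy $c_{\pi}\le M^{2}$. The remainder $R_{N,M}$ is right convolution with the central kernel
\[
r(g)=\sum_{\sqrt{c_{\pi}}>M}d_{\pi}\,e^{-c_{\pi}/N^{2}}\,\chi_{\pi}(g),
\]
since $\sum_{i,j}\widehat f^{\pi}_{ij}\sqrt{d_\pi}\pi_{ij}=d_{\pi}\,f*\chi_{\pi}$. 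Its $L^{\infty}\to L^{\infty}$ norm is at most $\|r\|_{L^{1}}\le\|r\|_{L^{\infty}}\le\sum_{\sqrt{c_{\pi}}>M}d_{\pi}^{2}e^{-c_{\pi}/N^{2}}$, using $|\chi_{\pi}|\le d_{\pi}$ and $\mu(G)=1$. This is a crude but sufficient estimate.

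The main step, and the one I expect to be the obstacle, is to bound this tail sum by $C_{G}N^{d/2}e^{-M^{2}/(2N^{2})}$. I would split the exponential as
\[
e^{-c_{\pi}/N^{2}}=e^{-c_{\pi}/(2N^{2})}\,e^{-c_{\pi}/(2N^{2})}\le e^{-M^{2}/(2N^{2})}\,e^{-c_{\pi}/(2N^{2})}
\]
on the tail. This leaves $\sum_{\pi}d_{\pi}^{2}e^{-c_{\pi}/(2N^{2})}$, which is exactly the heat trace $\Tr e^{\Lap_{G}/(2N^{2})}=p_{1/(2N^{2})}(e,e)$. The on-diagonal small-time bound $p_{t}(e,e)\le C t^{-d/2}$ for $t\in(0,1]$ (Weyl law / Minakshisundaram--Pleijel, or Grigor'yan), together with a bounded trace for $t\ge1$, gives a bound $CN^{d}$. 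That is worse than the claimed $N^{d/2}$.

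To get $N^{d/2}$, I would use the $L^{2}$ route. The identity $\|r\|_{L^{1}}\le\|r\|_{L^{2}}$ and Parseval give $\|r\|_{L^{2}}^{2}=\sum_{\text{tail}}d_{\pi}^{2}e^{-2c_{\pi}/N^{2}}$. Splitting as above yields
\[
\|r\|_{L^{2}}^{2}\le e^{-M^{2}/N^{2}}\sum_{\pi}d_{\pi}^{2}e^{-c_{\pi}/N^{2}}=e^{-M^{2}/N^{2}}\,p_{1/N^{2}}(e,e)\le C\,e^{-M^{2}/N^{2}}N^{d}.
\]
Taking square roots gives exactly $\eta_{N,M}\le C_{G}N^{d/2}e^{-M^{2}/(2N^{2})}$. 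This is the version I would write. The only external input is the heat-kernel diagonal bound for $N\ge1$, i.e. $t=N^{-2}\le1$.

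The remaining claims are routine. Since $A_{N,M}=H_{N}-R_{N,M}$ and $\|H_{N}\|_{L^\infty\to L^\infty}=1$ by \cref{lem:filter_heat}(i), the triangle inequality gives \eqref{eq:finite_heat_stability}. Writing $f-A_{N,M}f=(f-H_{N}f)+R_{N,M}f$, the bounds \eqref{eq:finite_heat_lip} and \eqref{eq:finite_heat_Hs} follow from \cref{lem:filter_heat}(ii) and (iii) respectively, plus $\|R_{N,M}f\|_{\infty}\le\eta_{N,M}\|f\|_{\infty}$.
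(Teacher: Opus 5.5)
Your proof is correct and follows the paper's argument essentially verbatim. The paper also bounds $\eta_{N,M}$ by $\|r_{N,M}\|_{L^{1}}\le\|r_{N,M}\|_{L^{2}}$, splits $e^{-2c_{\pi}/N^{2}}\le e^{-M^{2}/N^{2}}e^{-c_{\pi}/N^{2}}$ on the tail, and uses the diagonal bound $p_{1/N^{2}}(e,e)\le C_{G}N^{d}$; it then obtains the stability and approximation estimates from $A_{N,M}=H_{N}-R_{N,M}$ and \cref{lem:filter_heat}, and your discarded first attempt via $\|r\|_{L^{\infty}}$ can simply be dropped from the write-up.
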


\begin{proof}
The convolution kernel of $R_{N,M}$ is
\[
r_{N,M}(g)=\sum_{\sqrt{c_{\pi}}>M}e^{-c_{\pi}/N^{2}}d_{\pi}\chi_{\pi}(g).
\]
Young's inequality, $\mu(G)=1$, and Peter-Weyl orthogonality give
\begin{align*}
\|R_{N,M}\|_{L^{\infty}\to L^{\infty}}
&\le \|r_{N,M}\|_{L^{1}}
\le \|r_{N,M}\|_{L^{2}}=\left(\sum_{\sqrt{c_{\pi}}>M}d_{\pi}^{2}e^{-2c_{\pi}/N^{2}}\right)^{1/2}.
\end{align*}
On the summation range,
$e^{-2c_{\pi}/N^{2}}\le e^{-M^{2}/N^{2}}e^{-c_{\pi}/N^{2}}$.
The diagonal heat-kernel estimate gives
\[
\sum_{\pi\in\widehat G}d_{\pi}^{2}e^{-c_{\pi}/N^{2}}
=p_{1/N^{2}}(e,e)\le C_{G}N^{d},
\]
which proves \eqref{eq:finite_heat_tail}. Since $H_{N}$ is a Markov contraction,
\eqref{eq:finite_heat_stability} follows from $A_{N,M}=H_{N}-R_{N,M}$.
Finally,
\[
\|f-A_{N,M}f\|_{\infty}
\le \|f-H_{N}f\|_{\infty}+\eta_{N,M}\|f\|_{\infty},
\]
and \eqref{eq:finite_heat_lip}--\eqref{eq:finite_heat_Hs} follow from the heat-filter estimates in \cref{lem:filter_heat}.
\end{proof}

\begin{remark}[Logarithmic oversampling]
\label{rem:finite_heat_oversampling}
Let $K=T/\delta$ and fix any $r>0$. For $N,K\ge2$, the choice
\begin{equation}
M\ge N\Bigl(2\log K+(d+2r)\log N\Bigr)^{1/2}
\label{eq:finite_heat_oversampling}
\end{equation}
implies $K\eta_{N,M}\le C_{G}N^{-r}$. Hence
$(1+\eta_{N,M})^{K}\le e^{K\eta_{N,M}}=1+o(1)$, and
$\sum_{j=0}^{K-1}(1+\eta_{N,M})^{j}=O(K)$. Thus the finite-rank side truncation has the same cumulative stability scale as the ideal Markov heat filter. Notice that the heat scale $N$ and the retained spectral band $M$ must be treated as distinct parameters, since taking $M=N$ does not yield a uniform contraction estimate from \eqref{eq:finite_heat_tail}.
\end{remark}

For $K\in\N$, $\delta=T/K$, $N\ge 1$ and $\eps>0$, the \emph{viscous filtered spectral iterate} $V^{N,\eps}_{\delta,k}:G\to\R$, $k\in\{0,1,\dots,K\}$, is defined by
\begin{equation}
	\left\{
	\begin{aligned}
&\;V^{N,\eps}_{\delta,K}:=\Pi_{N}^{\mathrm{heat}}\phi \ ,\\
&\;V^{N,\eps}_{\delta,k}:=\Pi_{N}^{\mathrm{heat}}\,\mathcal T^{\eps}_{\delta}\,V^{N,\eps}_{\delta,k+1},
\quad k=K-1,\dots,0.
\end{aligned}
\right.
\label{eq:scheme}
\end{equation}

\subsection{Convergence under minimal regularity}
\label{ssec:lowreg}

We give a convergence analysis that does not require \emph{any} a priori regularity of the viscous solution, asking only that the terminal cost be Lipschitz. 
Indeed, the viscous step applied at every IMEX iteration is itself a smoother, so that each iterate $V^{\eps}_{\delta,k}$ automatically lies in $H^{s}$ with a quantitative norm bound.
The scheme thus gives its own regularity, and no hypothesis on the Hamiltonian is needed. In exchange, the coupling between $N$, $\delta$ and $\eps$ becomes more restrictive and, because a single resolvent gains only two derivatives, the dimension is limited to $d\le 3$, a restriction that disappears in the exponential variant, whose smoothing is unsaturated. The exponential variant is the one used in the numerical experiments, and we retain the resolvent because it requires only a single elliptic solve and so applies equally to non-spectral discretizations. In either case the minimal-regularity route yields convergence, but no explicit rate against the inviscid value.

\begin{lemma}
\label{lem:resolvent_smoothing}
Let $a\in(0,1]$. For every $s\in[0,2]$,
\begin{equation}
\|(I-a\Lap_{G})^{-1}f\|_{H^{s}(G)}\le a^{-s/2}\,\|f\|_{L^{2}(G)},
\quad\forall f\in L^{2}(G),
\label{eq:resolvent_smoothing}
\end{equation}
and the exponent $s=2$ is the saturation point, i.e., a single resolvent does not map $L^{2}(G)$ boundedly into $H^{s}(G)$ for any $s>2$. The heat step is unsaturated, and for every $s\ge 0$,
\begin{equation}
\bigl\|e^{a\Lap_{G}}f\bigr\|_{H^{s}(G)}\le C_{s}\,a^{-s/2}\,\|f\|_{L^{2}(G)},
\quad C_{s}:=1+(s/2)^{s/2}e^{\,1-s/2}.
\label{eq:semigroup_smoothing}
\end{equation}
\end{lemma}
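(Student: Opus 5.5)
Both estimates reduce to scalar inequalities for spectral multipliers, because the operators involved act diagonally on the Peter-Weyl decomposition. By \cref{thm:PW}, $\Lap_{G}$ acts on the $\pi$-isotypic block by $-c_{\pi}$. Hence $(I-a\Lap_{G})^{-1}$ and $e^{a\Lap_{G}}$ multiply the coefficients $\widehat f^{\pi}_{ij}$ by $(1+ac_{\pi})^{-1}$ and $e^{-ac_{\pi}}$ respectively. By \eqref{eq:Hs_PW},
\[
\|(I-a\Lap_{G})^{-1}f\|_{H^{s}}^{2}=\sum_{\pi}\sum_{i,j}\frac{(1+c_{\pi})^{s}}{(1+ac_{\pi})^{2}}|\widehat f^{\pi}_{ij}|^{2},
\]
and similarly for the heat step. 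It therefore suffices to bound $\sup_{\lambda\ge0}(1+\lambda)^{s/2}m(\lambda)$, where $m(\lambda)$ is the multiplier, and then apply Parseval.

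For the resolvent I use $a\le1$, which gives $1+\lambda\le a^{-1}(1+a\lambda)$, hence $(1+\lambda)^{s/2}\le a^{-s/2}(1+a\lambda)^{s/2}$. Consequently $(1+\lambda)^{s/2}(1+a\lambda)^{-1}\le a^{-s/2}(1+a\lambda)^{s/2-1}\le a^{-s/2}$, since $s\le2$. This is \eqref{eq:resolvent_smoothing}.

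For saturation, take $s>2$ and test on a single normalized block $f=\sqrt{d_{\pi}}\pi_{11}$. The ratio of norms is $(1+c_{\pi})^{s/2}/(1+ac_{\pi})$, which behaves like $a^{-1}c_{\pi}^{s/2-1}$ as $c_{\pi}\to\infty$. This is unbounded as long as the Casimir spectrum is unbounded, which holds by discreteness of the spectrum of $-\Lap_{G}$ tending to infinity (for $G$ nontrivial, $\dim G\ge1$).

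For the heat step I split $(1+\lambda)^{s/2}\le$ a sum of two terms, and this is the only place where care with constants is needed. Set $x=a\lambda$. For $s\ge2$, convexity gives $(1+\lambda)^{s/2}\le2^{s/2-1}(1+\lambda^{s/2})$, which would produce a different constant than the one claimed. So I will instead write $(1+\lambda)^{s/2}\le a^{-s/2}(a+x)^{s/2}\le a^{-s/2}(1+x)^{s/2}$ and bound $\sup_{x\ge0}(1+x)^{s/2}e^{-x}$. The function $(1+x)^{s/2}e^{-x}$ has its maximum at $x=s/2-1$ when $s>2$, where its value is $(s/2)^{s/2}e^{1-s/2}$. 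When $s\le2$ it is decreasing, so the maximum is at most $1$. In both cases the supremum is at most $1+(s/2)^{s/2}e^{1-s/2}=C_{s}$, which gives \eqref{eq:semigroup_smoothing}.

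The main subtlety is the hypothesis $a\le1$, used in $1+\lambda\le a^{-1}(1+a\lambda)$. The saturation claim is a lower bound, and it needs an unbounded sequence of Casimir eigenvalues, which I take from the discreteness of the spectrum.
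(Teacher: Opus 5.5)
Your proposal is correct and follows the paper's proof. Both diagonalize the two operators on the Peter-Weyl blocks, use $1+\lambda\le a^{-1}(1+a\lambda)$ for the resolvent, get saturation from the blow-up of $(1+c_\pi)^{s/2}/(1+ac_\pi)$ along the unbounded Casimir spectrum when $s>2$, and maximize $(1+\lambda)^{s/2}e^{-a\lambda}$ for the heat step. Your substitution $x=a\lambda$ writes out the maximization that the paper only sketches, and it gives exactly the constant $C_s$.
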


\begin{proof}
The multiplier of $(I-a\Lap_{G})^{-1}$ on the $\pi$-block is $(1+a c_{\pi})^{-1}$, so by \eqref{eq:Hs_PW},
\[
\|(I-a\Lap_{G})^{-1}f\|_{H^{s}}^{2}=\sum_{\pi}\frac{(1+c_{\pi})^{s}}{(1+a c_{\pi})^{2}}\sum_{i,j}|\widehat f^{\pi}_{ij}|^{2}\le \Bigl(\sup_{c\ge 0}\frac{(1+c)^{s}}{(1+a c)^{2}}\Bigr)\,\|f\|_{L^{2}}^{2}.
\]
For $a\le 1$ one has $1+c\le a^{-1}(1+ac)$, hence $(1+c)^{s}(1+ac)^{-2}\le a^{-s}(1+ac)^{s-2}\le a^{-s}$ for $s\le 2$, which gives \eqref{eq:resolvent_smoothing}, while for $s>2$ the ratio $(1+c)^{s}(1+ac)^{-2}$ tends to infinity with $c$, so no such bound holds. The bound \eqref{eq:semigroup_smoothing} follows in the same way by maximising the spectral multiplier $(1+c)^{s/2}e^{-ac}$ over $c\ge 0$, with no saturation.
\end{proof}

The iterates of the unprojected IMEX scheme inherit this gain. Let us denote  $C_{0}:=\|\phi\|_{L^{\infty}(G)}+T\|\ell\|_{L^{\infty}(G\times U)}$ in the following.

\begin{lemma}
\label{lem:Veps_delta_Hs}
Let $V^{\eps}_{\delta}$ be the time-discrete viscous value function of \eqref{eq:Veps_delta}, with $\delta\eps\le 1$. Then $\|V^{\eps}_{\delta,k}\|_{L^{\infty}(G)}\le C_{0}$ for all $k$ and, for every $s\in[0,2]$ and every $0\le k\le K-1$,
\begin{equation}
\|V^{\eps}_{\delta,k}\|_{H^{s}(G)}\le(\delta\eps)^{-s/2}\,C_{0}.
\label{eq:Veps_delta_Hs}
\end{equation}
For the exponential variant, in which $\mathcal E^{\eps}_{\delta}$ replaces $\mathcal T^{\eps}_{\delta}$ in \eqref{eq:Veps_delta}, the bound \eqref{eq:Veps_delta_Hs} holds for every $s\ge 0$ and $0\le k\le K-1$ with the constant $C_{s}\,(\delta\eps)^{-s/2}\,C_{0}$. The terminal iterate $V^{\eps}_{\delta,K}=\phi$ is only Lipschitz and is excluded.
\end{lemma}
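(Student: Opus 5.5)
The plan has two parts: an $L^{\infty}$ bound by induction, then an $H^{s}$ bound from a single application of the smoothing estimate of \cref{lem:resolvent_smoothing}. No recursion in $H^{s}$ is needed, because at every step $k\le K-1$ the iterate has the form $V^{\eps}_{\delta,k}=(I-\delta\eps\Lap_{G})^{-1}w_{k}$ with $w_{k}:=\mathcal T_{\delta}V^{\eps}_{\delta,k+1}$. Only the last smoothing step is used, so only an $L^{2}$ (in fact $L^{\infty}$) bound on $w_{k}$ is required.

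\emph{Sup-norm bound.} Downward induction on $k$ will show $\|V^{\eps}_{\delta,k}\|_{L^{\infty}}\le\|\phi\|_{L^{\infty}}+(K-k)\delta\|\ell\|_{L^{\infty}}$.
\begin{itemize}
\item At $k=K$ this is trivial.
\item For the inductive step, use the definition \eqref{eq:Bellman}. Each candidate $\delta\ell(g,u)+f(\Phi^{u}_{\delta}(g))$ lies in $[-\delta\|\ell\|_{\infty}-\|f\|_{\infty},\,\delta\|\ell\|_{\infty}+\|f\|_{\infty}]$, so $\|\mathcal T_{\delta}f\|_{\infty}\le\|f\|_{\infty}+\delta\|\ell\|_{\infty}$.
\item Then apply \cref{lem:resolvent_Linf} with $a=\delta\eps$, which does not increase the sup norm.
\end{itemize}
Since $(K-k)\delta\le T$, this gives $\|V^{\eps}_{\delta,k}\|_{L^{\infty}}\le C_{0}$. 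One detail needs checking: $\mathcal T_{\delta}f$ must be continuous, or at least bounded and measurable, so that the resolvent applies. Continuity follows from continuity of $\ell$ and of the flow together with compactness of $U$, as already implicit in \cref{prop:Tdelta_properties}.

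\emph{Sobolev bound.} Fix $0\le k\le K-1$ and $s\in[0,2]$.
\begin{itemize}
\item Since $\delta\eps\le1$, \eqref{eq:resolvent_smoothing} with $a=\delta\eps$ gives $\|V^{\eps}_{\delta,k}\|_{H^{s}}\le(\delta\eps)^{-s/2}\|w_{k}\|_{L^{2}}$.
\item Because $\mu(G)=1$, $\|w_{k}\|_{L^{2}}\le\|w_{k}\|_{L^{\infty}}$.
\item The previous step gives $\|w_{k}\|_{L^{\infty}}\le\|V^{\eps}_{\delta,k+1}\|_{\infty}+\delta\|\ell\|_{\infty}\le\|\phi\|_{\infty}+(K-k)\delta\|\ell\|_{\infty}\le C_{0}$.
\end{itemize}
This yields \eqref{eq:Veps_delta_Hs}.

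\emph{Exponential variant.} The same argument applies verbatim with $e^{\delta\eps\Lap_{G}}$ in place of the resolvent. The Markov property of the heat semigroup provides the sup-norm contraction, and \eqref{eq:semigroup_smoothing} gives the $H^{s}$ bound with constant $C_{s}(\delta\eps)^{-s/2}C_{0}$ for every $s\ge0$, since there is no saturation.

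The terminal iterate is excluded because it is $\phi$ itself, with no smoothing applied. There is no serious obstacle here. The only point of care is to avoid accumulating Sobolev norms across steps, which the one-step smoothing argument achieves. The bound is uniform in $k$ precisely because only the final resolvent or heat step is used, with the sup-norm bound feeding it.
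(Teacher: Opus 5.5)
Your proposal is correct and follows the paper's proof essentially line by line. Both arguments run the same downward induction for the sup-norm bound, using the Bellman increment $\delta\|\ell\|_{\infty}$ together with the $L^{\infty}$-contractivity of the resolvent or heat step. Both then apply the one-step smoothing estimate once to $w_k=\mathcal T_\delta V^{\eps}_{\delta,k+1}$, with $\|w_k\|_{L^2}\le\|w_k\|_{L^\infty}\le C_0$. The continuity check you flag for $\mathcal T_\delta f$ is a harmless extra detail that the paper leaves implicit.
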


\begin{proof}
The Bellman operator preserves $\|\mathcal T_{\delta}V\|_{\infty}\le\delta\|\ell\|_{\infty}+\|V\|_{\infty}$ (additive homogeneity and monotonicity, see \cref{prop:Tdelta_properties}), and the resolvent is $L^{\infty}$-non-expansive (see \cref{lem:resolvent_Linf}).  Since $V^{\eps}_{\delta,K}=\phi$, downward induction gives $\|V^{\eps}_{\delta,k}\|_{\infty}\le\|\phi\|_{\infty}+(K-k)\delta\|\ell\|_{\infty}\le C_{0}$. For $k\le K-1$, $V^{\eps}_{\delta,k}=(I-\delta\eps\Lap_{G})^{-1}g_{k}$ with $g_{k}:=\mathcal T_{\delta}V^{\eps}_{\delta,k+1}$ and $\|g_{k}\|_{\infty}\le C_{0}$. Since $\mu(G)=1$, $\|g_{k}\|_{L^{2}}\le\|g_{k}\|_{\infty}\le C_{0}$, and \cref{lem:resolvent_smoothing} with $a=\delta\eps$ yields \eqref{eq:Veps_delta_Hs}. The exponential variant is identical, using \eqref{eq:semigroup_smoothing} in place of \eqref{eq:resolvent_smoothing} and the fact that the heat step is an $L^{\infty}$ contraction.
\end{proof}

We can now bound the spectral error against the discrete viscous value $V^{\eps}_{\delta}$ and pass to the inviscid limit. 

\begin{theorem}
\label{thm:lowreg}
Under Assumption~\ref{ass:standing}, let $\phi\in\Lip(G)$. Run the heat-filtered scheme \eqref{eq:scheme} either with the resolvent operator $\mathcal T^{\eps}_{\delta}$, in which case let $d\le 3$ and fix $s\in(d/2,2]$, or with the exponential operator $\mathcal E^{\eps}_{\delta}$ in place of $\mathcal T^{\eps}_{\delta}$, both in \eqref{eq:Veps_delta} and in \eqref{eq:scheme}, in which case $d\ge 1$ is arbitrary and $s\in(d/2,\infty)$. Fix $\eta\in(0,s-d/2)$ and set $\beta=\beta_{h}(s,\eta)$ as in \eqref{eq:filter_approx}. Then for every $K\in\N$, $\delta=T/K$, $\eps>0$ with $\delta\eps\le1$, and $N\ge1$,
\begin{equation}
\max_{0\le k\le K}\bigl\|V^{N,\eps}_{\delta,k}-V^{\eps}_{\delta,k}\bigr\|_{L^{\infty}(G)}
\le C_{1}\,\Lip(\phi)\,N^{-1} + K\,C_{2}\,C_{0}\,(\delta\eps)^{-s/2}\,N^{-\beta},
\label{eq:lowreg_bound}
\end{equation}
where $C_{1}=\sqrt{C_{h}}$ and $C_{2}=C_{2}(G,s,\eta)$, the latter carrying the additional factor $C_{s}$ of \eqref{eq:semigroup_smoothing} in the exponential case. In particular, with $K=T/\delta$, the right-hand side tends to $0$ provided
\begin{equation}
N^{\beta}\;\gg\;\delta^{-1-s/2}\,\eps^{-s/2}\quad\text{and}\quad N\to\infty.
\label{eq:lowreg_coupling}
\end{equation}
If, in addition, $\delta\to0$ and $\eps\to0$ along a sequence satisfying \eqref{eq:lowreg_coupling}, then
\[
\max_{0\le k\le K}\bigl\|V^{N,\eps}_{\delta,k}-V(\cdot,t_{k})\bigr\|_{L^{\infty}(G)}\longrightarrow0.
\]
\end{theorem}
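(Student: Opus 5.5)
The plan is the same downward error recursion as in \cref{thm:sup_norm}, with the Lebesgue constant replaced by $1$ thanks to the Markov structure. Set $e_{k}:=V^{N,\eps}_{\delta,k}-V^{\eps}_{\delta,k}$. Since $V^{\eps}_{\delta,k}=\mathcal T^{\eps}_{\delta}V^{\eps}_{\delta,k+1}$ for $k\le K-1$, we split
\[
e_{k}=\Pi_{N}^{\mathrm{heat}}\bigl(\mathcal T^{\eps}_{\delta}V^{N,\eps}_{\delta,k+1}-\mathcal T^{\eps}_{\delta}V^{\eps}_{\delta,k+1}\bigr)-\bigl(I-\Pi_{N}^{\mathrm{heat}}\bigr)V^{\eps}_{\delta,k}.
\]
The first term is bounded by $\|e_{k+1}\|_{\infty}$. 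This uses the sup-norm non-expansiveness of the filter (\cref{lem:filter_heat}(i)) and of $\mathcal T^{\eps}_{\delta}$ (\cref{prop:Teps_stab}). For the exponential variant we use the analogous contraction of $\mathcal E^{\eps}_{\delta}$, since the heat step is a Markov contraction and $\mathcal T_{\delta}$ is non-expansive. This gives
\[
\|e_{k}\|_{\infty}\le\|e_{k+1}\|_{\infty}+\|(I-\Pi_{N}^{\mathrm{heat}})V^{\eps}_{\delta,k}\|_{\infty}.
\]

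Next we bound the consistency terms, treating the terminal step separately because $\phi$ is only Lipschitz. At $k=K$ we have $e_{K}=\Pi_{N}^{\mathrm{heat}}\phi-\phi$, so \cref{lem:filter_heat}(ii) gives $\|e_{K}\|_{\infty}\le\sqrt{C_{h}}\Lip(\phi)N^{-1}$. For $0\le k\le K-1$ we combine \cref{lem:filter_heat}(iii) with \cref{lem:Veps_delta_Hs}:
\[
\|(I-\Pi_{N}^{\mathrm{heat}})V^{\eps}_{\delta,k}\|_{\infty}\le C(G,s,\eta)N^{-\beta}\|V^{\eps}_{\delta,k}\|_{H^{s}}\le C_{2}C_{0}(\delta\eps)^{-s/2}N^{-\beta}.
\]
The range of admissible $s$ determines the dimension restriction. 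In the resolvent case we need $s\in(d/2,2]$, which forces $d\le3$. In the exponential case any $s>d/2$ is admissible, at the price of the factor $C_{s}$. Summing the telescoped recursion over at most $K$ steps yields \eqref{eq:lowreg_bound}.

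For the limit statement, note that $K=T/\delta$, so the second term of \eqref{eq:lowreg_bound} is $TC_{2}C_{0}\,\delta^{-1-s/2}\eps^{-s/2}N^{-\beta}$. This tends to zero under \eqref{eq:lowreg_coupling}, and the first term vanishes as $N\to\infty$. We then write
\[
\|V^{N,\eps}_{\delta,k}-V(\cdot,t_{k})\|_{\infty}\le\|e_{k}\|_{\infty}+\|V^{\eps}_{\delta,k}-V^{\eps}(\cdot,t_{k})\|_{\infty}+\|V^{\eps}-V\|_{\infty}.
\]
The last term is $O(\sqrt{\eps})$ by \cref{thm:VV}.

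The main obstacle is the middle term, the IMEX time-discretization error against $V^{\eps}$, which must vanish uniformly as $\delta,\eps\to0$ jointly. Since no rate is claimed, my first attempt is the Barles--Souganidis framework. Monotonicity (\cref{lem:Teps_monotone}), stability (the $C_{0}$ bound), and consistency with \eqref{eq:HJB} hold as $\delta,\eps\to0$: on smooth test functions the resolvent contributes $-\delta\eps\Lap_{G}\varphi+O(\delta^{2}\eps^{2})$, which is $o(\delta)$. Together with uniqueness from \cref{thm:HJB}, this gives uniform convergence of $V^{\eps}_{\delta}$ to $V$ directly, bypassing $V^{\eps}$. As a more quantitative alternative, I would compare $V^{\eps}_{\delta}$ with $\val_{\delta}$ by induction. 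The resolvent and heat steps move a $\Lip$-$L_{k}$ function by at most $L_{k}\sqrt{C_{h}\delta\eps}$, so the accumulated error is $K\sqrt{\delta\eps}\,L\lesssim\sqrt{\eps/\delta}$. This bound is useful only when $\eps\ll\delta$, so I would first try the Barles--Souganidis route.
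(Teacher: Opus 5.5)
Your proposal is correct and follows the paper's proof essentially step for step. Both use the same splitting of $e_{k}$ against $V^{\eps}_{\delta,k}$, the Lipschitz estimate at the terminal step, and the $H^{s}$ smoothing bound of \cref{lem:Veps_delta_Hs} at the interior steps. For the limit, both apply the same Barles--Souganidis argument directly to the monotone, stable family $V^{\eps}_{\delta}$ indexed by $(\delta,\eps)$, bypassing $V^{\eps}$. One harmless slip: the resolvent expansion is $\varphi+\delta\eps\Lap_{G}\varphi+O(\delta^{2}\eps^{2})$, with a plus sign, which does not matter since the term is $o(\delta)$ as $\eps\to0$ either way.
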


\begin{proof}
Write $E_{k}:=V^{N,\eps}_{\delta,k}-V^{\eps}_{\delta,k}$. Since $V^{N,\eps}_{\delta,k}=\Pi_{N}^{\mathrm{heat}}\mathcal T^{\eps}_{\delta}V^{N,\eps}_{\delta,k+1}$ by \eqref{eq:scheme} and $V^{\eps}_{\delta,k}=\mathcal T^{\eps}_{\delta}V^{\eps}_{\delta,k+1}$ by \eqref{eq:Veps_delta}, inserting $\Pi_{N}^{\mathrm{heat}}\mathcal T^{\eps}_{\delta}V^{\eps}_{\delta,k+1}$ and using $\mathcal T^{\eps}_{\delta}V^{\eps}_{\delta,k+1}=V^{\eps}_{\delta,k}$ gives
\[
E_{k}=\Pi_{N}^{\mathrm{heat}}\bigl(\mathcal T^{\eps}_{\delta}V^{N,\eps}_{\delta,k+1}-\mathcal T^{\eps}_{\delta}V^{\eps}_{\delta,k+1}\bigr)-(I-\Pi_{N}^{\mathrm{heat}})V^{\eps}_{\delta,k}.
\]
By the non-expansiveness \eqref{eq:Teps_nonexp} of $\mathcal T^{\eps}_{\delta}$ and the non-expansiveness of $\Pi_{N}^{\mathrm{heat}}$ in \eqref{eq:filter_Linfty_stab},
\[
\|E_{k}\|_{\infty}\le\|E_{k+1}\|_{\infty}+\|(I-\Pi_{N}^{\mathrm{heat}})V^{\eps}_{\delta,k}\|_{\infty},\quad k=0,\dots,K-1.
\]
For $k\le K-1$, the sup-norm approximation \eqref{eq:filter_approx} and \cref{lem:Veps_delta_Hs} give
$\|(I-\Pi_{N}^{\mathrm{heat}})V^{\eps}_{\delta,k}\|_{\infty}\le C_{2}\,N^{-\beta}\|V^{\eps}_{\delta,k}\|_{H^{s}}\le C_{2}\,C_{0}\,(\delta\eps)^{-s/2}N^{-\beta}$.
At the terminal step $E_{K}=-(I-\Pi_{N}^{\mathrm{heat}})\phi$, and the Lipschitz approximation \eqref{eq:filter_lip} gives $\|E_{K}\|_{\infty}\le\sqrt{C_{h}}\,\Lip(\phi)N^{-1}$. Summing the recursion from $k=K$ down to each $k$ yields \eqref{eq:lowreg_bound}. In the exponential case the identical recursion applies, since $\mathcal E^{\eps}_{\delta}$ is monotone and sup-norm non-expansive as well, and the unsaturated bound of \cref{lem:Veps_delta_Hs} admits every $s>d/2$, which removes the restrictions $s\le 2$ and $d\le 3$. Condition \eqref{eq:lowreg_coupling} makes the dominant term $K(\delta\eps)^{-s/2}N^{-\beta}=T\delta^{-1-s/2}\eps^{-s/2}N^{-\beta}$ vanish.

For the convergence, set $u_{j}:=V^{\eps}_{\delta}(\cdot,t_{K-j})$, so that the backward recursion \eqref{eq:Veps_delta} becomes the forward-in-time scheme $u_{j+1}=\mathcal T^{\eps}_{\delta}u_{j}$, $u_{0}=\phi$, regarded as a family of schemes indexed by $(\delta,\eps)$. Every member is monotone (\cref{lem:Teps_monotone}) and $L^{\infty}$-stable (\cref{prop:Teps_stab}), and the family is consistent, in the joint limit $(\delta,\eps)\to(0,0)$, with the inviscid forward equation $\partial_{\tau}u=-\Ham(g,\nabla_{G}^{\mathrm{body}}u)$ obtained from \eqref{eq:HJB} by the time reversal $\tau=T-t$. Indeed, for smooth test functions a Taylor expansion along the flows and the resolvent expansion give $\delta^{-1}(\mathcal T^{\eps}_{\delta}\psi-\psi)=-\Ham(\cdot,\nabla_{G}^{\mathrm{body}}\psi)+\eps\Lap_{G}\psi+o(1)$, and the diffusion term vanishes with $\eps$. By the Barles-Souganidis theorem \cite{BarlesSouganidis1991}, $V^{\eps}_{\delta}\to V$ locally uniformly as $(\delta,\eps)\to(0,0)$, hence uniformly on the compact $G\times[0,T]$, and combining this with \eqref{eq:lowreg_bound} along \eqref{eq:lowreg_coupling} proves the convergence. 
\end{proof}

\begin{corollary}[Finite-rank version under minimal regularity]
\label{cor:lowreg_finite_rank}
Run the scheme in its exponential variant, with the ideal heat filter $H_{N}$ replaced by its side truncation $A_{N,M}=\Pi_{M}H_{N}$ of \cref{prop:finite_rank_heat}, and denote the resulting iterates by $\widetilde V^{N,M,\eps}_{\delta,k}$. Under the hypotheses of \cref{thm:lowreg}, fix $\eta_{0}\in(0,s-d/2)$ and set $\beta_{h}=\min\{s-d/2-\eta_{0},2\}$. Then
\begin{equation}
	\begin{aligned}
\|\widetilde V^{N,M,\eps}_{\delta,0}-V^{\eps}_{\delta,0}\|_{L^{\infty}} &\le e^{K\eta_{N,M}}\Big(\sqrt{C_{h}}\,\Lip(\phi)N^{-1}+\eta_{N,M}\|\phi\|_{\infty} \\
&+K\bigl(C_{2}C_{0}(\delta\eps)^{-s/2}N^{-\beta_{h}}+\eta_{N,M}C_{0}\bigr)\Big).
\end{aligned}
\label{eq:lowreg_finite_rank}
\end{equation}
Consequently, the finite-rank iterates converge to the inviscid value function provided, in addition to $\delta,\eps\to0$, that $K(\delta\eps)^{-s/2}N^{-\beta_{h}}\to0$, $K\eta_{N,M}\to0$, and $N\to\infty$. In particular, the oversampling rule \eqref{eq:finite_heat_oversampling} enforces the second condition.
\end{corollary}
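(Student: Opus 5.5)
The plan is to rerun the one-step error recursion from the proof of \cref{thm:lowreg}, with the Markov filter $H_{N}$ replaced by $A_{N,M}=\Pi_{M}H_{N}$. The only new feature is that $A_{N,M}$ is no longer a contraction: by \eqref{eq:finite_heat_stability} its sup-norm gain is at most $1+\eta_{N,M}$. The argument is carried out for the exponential variant, so the comparison sequence is $V^{\eps}_{\delta,k}=\mathcal E^{\eps}_{\delta}V^{\eps}_{\delta,k+1}$ with $V^{\eps}_{\delta,K}=\phi$.

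\textbf{Step 1: the error recursion.} Set $\widetilde E_{k}:=\widetilde V^{N,M,\eps}_{\delta,k}-V^{\eps}_{\delta,k}$. Inserting $A_{N,M}\mathcal E^{\eps}_{\delta}V^{\eps}_{\delta,k+1}=A_{N,M}V^{\eps}_{\delta,k}$ gives
\[
\widetilde E_{k}=A_{N,M}\bigl(\mathcal E^{\eps}_{\delta}\widetilde V^{N,M,\eps}_{\delta,k+1}-\mathcal E^{\eps}_{\delta}V^{\eps}_{\delta,k+1}\bigr)-(I-A_{N,M})V^{\eps}_{\delta,k}.
\]
Since $A_{N,M}$ is linear, its operator norm bound $1+\eta_{N,M}$ applies to the difference. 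Combined with the sup-norm non-expansiveness of $\mathcal E^{\eps}_{\delta}$, this yields
\[
\|\widetilde E_{k}\|_{\infty}\le(1+\eta_{N,M})\|\widetilde E_{k+1}\|_{\infty}+\|(I-A_{N,M})V^{\eps}_{\delta,k}\|_{\infty}.
\]

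\textbf{Step 2: the consistency terms.} At the terminal step, $\widetilde E_{K}=-(I-A_{N,M})\phi$, so \eqref{eq:finite_heat_lip} bounds it by $\sqrt{C_{h}}\Lip(\phi)N^{-1}+\eta_{N,M}\|\phi\|_{\infty}$. For $k\le K-1$, I apply \eqref{eq:finite_heat_Hs} together with \cref{lem:Veps_delta_Hs} in its exponential form, which gives $\|V^{\eps}_{\delta,k}\|_{H^{s}}\le C_{s}(\delta\eps)^{-s/2}C_{0}$ and $\|V^{\eps}_{\delta,k}\|_{\infty}\le C_{0}$. Absorbing $C_{s}$ into $C_{2}$ as in \cref{thm:lowreg}, each such term is at most $C_{2}C_{0}(\delta\eps)^{-s/2}N^{-\beta_{h}}+\eta_{N,M}C_{0}$.

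\textbf{Step 3: unrolling.} Iterating from $k=K$ down to $0$, every factor that appears is $(1+\eta_{N,M})^{j}$ with $j\le K$, and each is bounded by $e^{K\eta_{N,M}}$. There are at most $K$ interior consistency terms, so pulling the common factor $e^{K\eta_{N,M}}$ out gives exactly \eqref{eq:lowreg_finite_rank}. The same argument started from index $k$ gives the estimate uniformly in $k$.

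\textbf{Step 4: the convergence statement.} Under $K\eta_{N,M}\to0$, the prefactor tends to $1$, and the terms $\eta_{N,M}\|\phi\|_{\infty}$ and $K\eta_{N,M}C_{0}$ vanish. The term $K(\delta\eps)^{-s/2}N^{-\beta_{h}}$ vanishes by hypothesis, and $N^{-1}\to0$. Hence $\widetilde V^{N,M,\eps}_{\delta}-V^{\eps}_{\delta}\to0$ in sup-norm. The Barles--Souganidis step in the proof of \cref{thm:lowreg} gives $V^{\eps}_{\delta}\to V$ as $(\delta,\eps)\to(0,0)$, and the triangle inequality concludes. Finally, \cref{rem:finite_heat_oversampling} shows that \eqref{eq:finite_heat_oversampling} gives $K\eta_{N,M}\le C_{G}N^{-r}\to0$, which is the second condition.

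\textbf{Main obstacle.} The only delicate point is the loss of the contraction property, and it is resolved by keeping the error recursion linear in the filter. The Bellman nonlinearity only ever acts on differences through its non-expansiveness, so the filter's norm enters purely multiplicatively. The geometric factor $(1+\eta_{N,M})^{K}$ is then harmless precisely under the condition $K\eta_{N,M}\to0$.
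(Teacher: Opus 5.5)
Your proposal is correct and follows the paper's proof essentially line by line. It uses the same error recursion with gain $1+\eta_{N,M}$ from \eqref{eq:finite_heat_stability}, the same consistency terms from \eqref{eq:finite_heat_lip}, \eqref{eq:finite_heat_Hs} and the exponential smoothing bound of \cref{lem:Veps_delta_Hs}, the same unrolling with $(1+\eta_{N,M})^{j}\le e^{K\eta_{N,M}}$, and the same conclusion through the Barles--Souganidis argument of \cref{thm:lowreg} together with \cref{rem:finite_heat_oversampling}.
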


\begin{proof}
Set $E_{k}:=\widetilde V^{N,M,\eps}_{\delta,k}-V^{\eps}_{\delta,k}$. By \eqref{eq:finite_heat_stability}, the non-expansiveness of the exponential viscous Bellman step, \eqref{eq:finite_heat_Hs}, and the smoothing estimate \eqref{eq:semigroup_smoothing},
\[
\|E_{k}\|_{\infty}
\le (1+\eta_{N,M})\|E_{k+1}\|_{\infty}
+C_{2}C_{0}(\delta\eps)^{-s/2}N^{-\beta_{h}}
+\eta_{N,M}C_{0},
\]
and the terminal estimate follows from \eqref{eq:finite_heat_lip}. Iterating downward and using $(1+\eta_{N,M})^{K}\le e^{K\eta_{N,M}}$ gives \eqref{eq:lowreg_finite_rank}. If $K\eta_{N,M}\to0$, the prefactor is $1+o(1)$, and the convergence assertion follows as in the proof of \cref{thm:lowreg}.
\end{proof}

\section{A finite-rank Markov filter and uniform convergence for Lipschitz data}
\label{sec:markov}

This section contains the main results of the paper. \Cref{ssec:markov_filter} constructs a spectral filter on $G$ that is finite-rank, positivity preserving, and non-expansive in the supremum norm. 
\Cref{ssec:markov_convergence} treats the filtering step as a small centred random perturbation of the controlled dynamics and proves a convergence theorem for the resulting Fej\'er-Markov scheme with an explicit rate, valid for Lipschitz data alone, with or without artificial viscosity.  \Cref{ssec:fully_discrete_markov} extends the analysis to a fully discrete realization based on positive cubature, covering the control discretization and the Lie-group flow integration, so that the convergence theorem applies to the implemented iteration. 

\subsection{An exactly finite-rank Markov Peter-Weyl filter}
\label{ssec:markov_filter}

The analysis of \cref{ssec:lowreg} rests on a filter whose $L^{\infty}$ operator norm equals one. In a coupled limit with $K=T/\delta\to\infty$ steps, any constant exceeding one is amplified geometrically. One therefore needs a filter whose stability constant is exactly one, or differs from one by an error whose cumulative effect is controlled. The heat semigroup has the desired Markov property but is not finite-rank, and its side truncation restores contractivity only asymptotically, through the oversampling rule of \cref{rem:finite_heat_oversampling}. A compactly supported smooth multiplier, on the other hand, is finite-rank but is not, in general, positivity preserving. We now construct a filter having all three properties simultaneously, namely finite rank, positivity, and exact sup-norm contractivity.

We first record the elementary product property of Casimir spectral spaces that will be used below.

\begin{lemma}[Product property]
\label{lem:spectral_product}
There exists a constant $\kappa_G\ge1$, depending only on $G$ and on the fixed bi-invariant metric, such that
\begin{equation}
 X_R\,X_S\subset X_{\kappa_G(R+S+1)}
 \quad\text{for every }R,S\ge1.
\label{eq:spectral_product}
\end{equation}
Here $X_RX_S$ denotes the linear span of the pointwise products $fg$, with $f\in X_R$ and $g\in X_S$.
\end{lemma}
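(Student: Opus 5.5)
We would prove \cref{lem:spectral_product} through highest-weight theory. Products of matrix coefficients decompose along tensor products of representations, and the Casimir eigenvalue is a quadratic function of the highest weight. First, fix a maximal torus with Lie algebra $\mathfrak t\subset\Lie$, a choice of positive roots with half-sum $\rho$, and the inner product on $\mathfrak t^{*}$ induced by $\inner{\cdot}{\cdot}_{\Lie}$. Every $\pi\in\widehat G$ has a highest weight $\lambda_\pi$. The Casimir eigenvalue is given by Freudenthal's formula,
\[
c_\pi=|\lambda_\pi+\rho|^{2}-|\rho|^{2}.
\]
This holds up to a fixed positive normalization constant on each simple ideal, and those constants are absorbed into the norm. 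It follows that $|\lambda_\pi|\le\sqrt{c_\pi}\le|\lambda_\pi|+2|\rho|$, using $\inner{\lambda_\pi}{\rho}\ge0$ for dominant $\lambda_\pi$. If the normalizations of the simple factors and the central part differ, we replace $|\cdot|$ by the weighted norm defined by the Casimir itself. The conclusion is that $\sqrt{c_\pi}$ and $|\lambda_\pi|$ are comparable up to an additive constant $2|\rho|$.

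Second, observe that $\pi_{ij}\sigma_{kl}$ is a matrix coefficient of $\pi\otimes\sigma$, namely $\langle e_i\otimes e_k,(\pi\otimes\sigma)(g)\,e_j\otimes e_l\rangle$. Decomposing $\pi\otimes\sigma=\bigoplus\tau$ into irreducibles and expanding the vectors in the corresponding orthonormal bases writes $\pi_{ij}\sigma_{kl}$ as a finite linear combination of matrix coefficients of the constituents $\tau$. Every weight of $\pi\otimes\sigma$ has the form $\mu+\nu$, where $\mu$ is a weight of $\pi$ and $\nu$ a weight of $\sigma$. Every weight of an irreducible lies in the convex hull of the Weyl orbit of its highest weight, so $|\mu|\le|\lambda_\pi|$ and $|\nu|\le|\lambda_\sigma|$. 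In particular each constituent satisfies $|\lambda_\tau|\le|\lambda_\pi|+|\lambda_\sigma|$.

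Combining the two steps, if $\sqrt{c_\pi}\le R$ and $\sqrt{c_\sigma}\le S$, then
\[
\sqrt{c_\tau}\le|\lambda_\tau|+2|\rho|\le R+S+2|\rho|\le\kappa_G(R+S+1),
\]
with $\kappa_G:=\max\{1,2|\rho|\}$. Hence every product $fg$ of generators lies in $X_{\kappa_G(R+S+1)}$, and by bilinearity the span does too. Since $X_R$ and $X_S$ are finite-dimensional, no closure issue arises.

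The main obstacle is the first step for a general connected compact $G$, which need not be semisimple or simply connected. We handle it by passing to the Lie algebra decomposition $\Lie=\mathfrak z\oplus\bigoplus\mathfrak g_i$. The Casimir then splits as a sum: $|\lambda|^2$ on the center, plus $\alpha_i(|\lambda_i+\rho_i|^2-|\rho_i|^2)$ on each simple factor, where $\alpha_i>0$ are the scale factors of the chosen invariant metric. We work with the norm $\|\lambda\|^2:=|\lambda_{\mathfrak z}|^2+\sum_i\alpha_i|\lambda_i|^2$. Weights of a representation of $G$ are weights of its Lie algebra representation, so the convex-hull argument applies factorwise. The triangle inequality in this Weyl-invariant norm gives the same conclusion, with the constant $2\|\rho\|$.
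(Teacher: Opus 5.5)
Your proof is correct and follows essentially the same route as the paper. In both, products of matrix coefficients become matrix coefficients of $\pi\otimes\sigma$, and the Casimir eigenvalues of its irreducible constituents are controlled through their highest weights. You use the exact formula $c_\pi=|\lambda_\pi+\rho|^2-|\rho|^2$ (with $\rho$ your half-sum of positive roots) and the Weyl-orbit convex-hull bound, which give the exact inequality $|\lambda_\tau|\le|\lambda_\pi|+|\lambda_\sigma|$ and the explicit constant $\kappa_G=\max\{1,2|\rho|\}$, whereas the paper uses only dominance order and unspecified multiplicative comparability constants.
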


\begin{proof}
It is enough to consider products of matrix coefficients of two irreducible representations $\pi$ and $\rho$. Such a product is a matrix coefficient of $\pi\otimes\rho$, and hence decomposes into matrix coefficients of the irreducible constituents of this tensor product. We must therefore bound the Casimir eigenvalue of every irreducible constituent $\tau\subset\pi\otimes\rho$.

We recall the standard parameterization in a form adapted to this estimate. Up to a finite central quotient, a connected compact Lie group is the product of a torus and a compact connected semisimple group. The irreducible representations are described by an integral weight on the toral factor and a dominant integral highest weight on the semisimple factor. With respect to the fixed bi-invariant metric, the Casimir eigenvalue is a positive-definite quadratic expression in these weights, up to the usual linear $2\varrho$ shift on the semisimple factor. Consequently there is a constant $C_G$ such that, for every irreducible representation $\sigma$ with combined weight $\lambda_\sigma$,
\begin{equation}
 C_G^{-1}(1+|\lambda_\sigma|)
 \le 1+\sqrt{c_\sigma}
 \le C_G(1+|\lambda_\sigma|).
\label{eq:casimir_weight_equiv}
\end{equation}
The finite central quotient only restricts the admissible weight lattice and does not affect the estimate.

If $\tau$ occurs in $\pi\otimes\rho$, then the toral weight of $\tau$ is the sum of the toral weights of $\pi$ and $\rho$, and the semisimple highest weight of $\tau$ is dominated by the sum of the highest weights of $\pi$ and $\rho$. In particular
\[
 |\lambda_\tau|\le C_G\bigl(|\lambda_\pi|+|\lambda_\rho|+1\bigr).
\]
Using \eqref{eq:casimir_weight_equiv} gives
\[
 1+\sqrt{c_\tau}
 \le C_G\bigl(1+\sqrt{c_\pi}+\sqrt{c_\rho}\bigr).
\]
Thus, if $\pi\in\widehat G_R$ and $\rho\in\widehat G_S$, every constituent $\tau$ of $\pi\otimes\rho$ belongs to $\widehat G_{\kappa_G(R+S+1)}$ after increasing $\kappa_G$, which proves \eqref{eq:spectral_product}.
\end{proof}

Fix a real-valued function $\vartheta\in C_c^\infty([0,\infty))$ such that
\[
 0\le\vartheta\le1,
 \quad
 \vartheta\equiv1\text{ on }[0,1],
 \quad
 \operatorname{supp}\vartheta\subset[0,2].
\]
For $N\ge1$, define the central smooth spectral kernel
\begin{equation}
 D_N(g):=
 \sum_{\pi\in\widehat G}
 \vartheta\!\left(\frac{\sqrt{c_\pi}}{N}\right)d_\pi\chi_\pi(g),
 \quad
 Z_N:=\|D_N\|_{L^2(G)}^2,
\label{eq:DN_def}
\end{equation}
and the Fej\'er-type kernel
\begin{equation}
 F_N(g):=\frac{|D_N(g)|^2}{Z_N}.
\label{eq:positive_kernel}
\end{equation}
We denote convolution with $F_N$ by
\begin{equation}
 \mathcal M_Nf:=f*F_N.
\label{eq:MN_def}
\end{equation}

\begin{theorem}[Finite-rank Markov filter]
\label{thm:finite_rank_markov_filter}
There are constants $\kappa\ge1$ and $C_q<\infty$, depending only on $G$, the metric, $\vartheta$, and $q$, such that, for every $N\ge1$, the following properties hold.
\begin{enumerate}[label=\textnormal{(\roman*)},leftmargin=2em]
\item $F_N$ is central, non-negative, inversion invariant, and has unit mass:
\[
 F_N(g)\ge0,
 \quad F_N(g^{-1})=F_N(g),
 \quad \int_GF_N\,\mathrm d\mu=1.
\]
\item $F_N\in X_{\kappa N}$.  Consequently $\mathcal M_N$ has finite-dimensional range contained in $X_{\kappa N}$.
\item For every $L>0$ there is $C_L<\infty$ such that
\begin{equation}
 0\le F_N(g)
 \le C_L\frac{N^d}{(1+N\dist_G(g,e))^L}.
\label{eq:FN_localization}
\end{equation}
In particular, for $q=1,2$,
\begin{equation}
 \int_G\dist_G(g,e)^qF_N(g)\,\mathrm d\mu(g)
 \le C_qN^{-q}.
\label{eq:FN_moments}
\end{equation}
\item $\mathcal M_N$ is a Markov operator.  More precisely, it is linear, positivity preserving, constant preserving, monotone, and
\begin{equation}
 \|\mathcal M_Nf-\mathcal M_Nh\|_{L^\infty}
 \le\|f-h\|_{L^\infty}.
\label{eq:MN_contract}
\end{equation}
Moreover,
\begin{equation}
 \Lip(\mathcal M_Nf)\le\Lip(f),
 \quad
 \|\mathcal M_Nf-f\|_{L^\infty}
 \le C_1N^{-1}\Lip(f)
\label{eq:MN_Jackson}
\end{equation}
for every $f\in\Lip(G)$.
\end{enumerate}
\end{theorem}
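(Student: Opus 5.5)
We prove the four items in order. The first two are algebraic, the third is the analytic core, and the last follows from the first three by standard convolution arguments.

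\emph{Items (i) and (ii).} Since $\vartheta$ is real and $\overline{\chi_\pi(g)}=\chi_\pi(g^{-1})$, and since the set of Casimir eigenvalues is invariant under $\pi\mapsto\bar\pi$ (with $c_{\bar\pi}=c_\pi$ and $d_{\bar\pi}=d_\pi$), we get $\overline{D_N(g)}=D_N(g^{-1})$. Hence $|D_N(g)|^2=D_N(g)D_N(g^{-1})$, which is non-negative, central and inversion invariant. Unit mass holds by the definition of $Z_N$, and $Z_N>0$ because the trivial representation has $c=0$ and $\vartheta(0)=1$. For (ii), $D_N\in X_{2N}$ because $\operatorname{supp}\vartheta\subset[0,2]$, and so is $\overline{D_N}$. \Cref{lem:spectral_product} then gives $F_N\in X_{\kappa_G(4N+1)}\subset X_{\kappa N}$ with $\kappa:=5\kappa_G$. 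Convolution with $F_N$ acts blockwise, multiplying each $\pi$-block by $\widehat{F_N}(\pi)$, which vanishes outside $\widehat G_{\kappa N}$. Therefore the range of $\mathcal M_N$ lies in $X_{\kappa N}$.

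\emph{Item (iii): localization.} This is the main obstacle. The plan is to prove the smooth-window kernel bound
\[
|D_N(g)|\le C_L N^d(1+N\dist_G(g,e))^{-L},
\]
and to pair it with the lower bound $Z_N=\sum_\pi\vartheta(\sqrt{c_\pi}/N)^2d_\pi^2\ge\sum_{\sqrt{c_\pi}\le N}d_\pi^2\ge cN^d$. Together these give
\[
F_N\le C N^{2d}(1+N\dist)^{-2L}/N^d,
\]
which is \eqref{eq:FN_localization} after renaming $L$. For the lower bound on $Z_N$, I would use the Weyl law, or equivalently the heat trace $\sum d_\pi^2e^{-c_\pi t}\asymp t^{-d/2}$ from \cref{prop:finite_rank_heat}'s diagonal estimate, combined with a Tauberian or Karamata step. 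Alternatively, one can count dominant weights directly through the Weyl dimension formula.

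For the kernel bound, note that $D_N$ is the convolution kernel of $\vartheta(\sqrt{-\Lap_G}/N)$. I would write $\vartheta(\sqrt{\lambda})=h(\lambda)$ and represent it through the heat semigroup or the wave equation. The most direct route is the standard finite-propagation-speed argument: express $\vartheta(\sqrt{-\Lap}/N)$ via $\cos(t\sqrt{-\Lap})$ with the Fourier transform of the even extension of $\vartheta$, which is Schwartz. Finite speed makes the part with $|t|<\dist/2$ vanish off the ball, and the rapid decay of $\widehat\vartheta(t N)$ controls the rest, using Sobolev bounds $\|\cos(t\sqrt{-\Lap})\|_{H^{-s}\to H^{s}}$-type estimates localized in frequency. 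Alternatively, one can simply cite the known smooth spectral-multiplier localization on compact manifolds (Filbir--Mhaskar \cite{FilbirMhaskar2010}), which holds under Gaussian heat-kernel bounds available on $G$. I would cite it and sketch this argument.

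\emph{Moments and item (iv).} The moments \eqref{eq:FN_moments} follow by integrating \eqref{eq:FN_localization} with $L>d+q$ in geodesic polar coordinates, using $\mu(B(e,r))\le Cr^d$. Dyadic shells then give $\int\dist^q F_N\le CN^{-q}$. For (iv), linearity, positivity, constant preservation, monotonicity and the contraction \eqref{eq:MN_contract} are immediate from $F_N\ge0$ and $\int F_N=1$. The Lipschitz bound is the bi-invariance argument of \cref{lem:filter_heat}(i): write $\mathcal M_Nf(g)=\int f(gz^{-1})F_N(z)\,\mathrm d\mu(z)$, so that $|\mathcal M_Nf(g)-\mathcal M_Nf(h)|\le\Lip(f)\dist_G(g,h)$. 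Finally,
\[
|\mathcal M_Nf(g)-f(g)|\le\Lip(f)\int\dist_G(gz^{-1},g)F_N(z)\,\mathrm d\mu=\Lip(f)\int\dist_G(z,e)F_N(z)\,\mathrm d\mu\le C_1N^{-1}\Lip(f),
\]
by bi-invariance and the case $q=1$ of \eqref{eq:FN_moments}.
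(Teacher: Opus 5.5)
Your proposal is correct and follows the paper's proof essentially step by step: character symmetry for (i), \cref{lem:spectral_product} for (ii), a Weyl lower bound on $Z_N$ combined with the cited smooth-multiplier localization for (iii), and dyadic shells plus bi-invariance for the moments and (iv). The only cosmetic difference is that you write $|D_N|^2=D_N\overline{D_N}$, while the paper first shows that $D_N$ is real so that $|D_N|^2=D_N^2$. One small caveat: if you derive $Z_N\ge cN^d$ from the heat trace instead of citing Weyl's law, the diagonal estimate of \cref{prop:finite_rank_heat} is only an upper bound, so you also need the matching lower bound $p_t(e,e)\ge ct^{-d/2}$, from which the counting lower bound follows directly without a full Tauberian theorem.
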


\begin{proof}
We prove the four assertions separately.

First, let $\pi^*$ denote the contragredient representation. Since $c_{\pi^*}=c_\pi$ and $\chi_{\pi^*}(g)=\overline{\chi_\pi(g)}=\chi_\pi(g^{-1})$, the summands corresponding to $\pi$ and $\pi^*$ occur in \eqref{eq:DN_def} with the same real coefficient. Therefore $D_N$ is real-valued and satisfies $D_N(g^{-1})=D_N(g)$. It is central because it is a linear combination of characters. Hence $F_N=|D_N|^2/Z_N$ is central, non-negative and inversion invariant. Moreover,
\[
 \int_G F_N\,d\mu=Z_N^{-1}\int_G |D_N|^2\,d\mu=1,
\]
which proves (i).

Second, the multiplier defining $D_N$ is supported in $\{\sqrt{c_\pi}\le 2N\}$, hence $D_N\in X_{2N}$. Since $D_N$ is real-valued, $|D_N|^2=D_N^2$ is a product of two functions in $X_{2N}$. The product property \cref{lem:spectral_product} gives
\[
 |D_N|^2\in X_{\kappa_G(4N+1)}\subset X_{\kappa N}
\]
for a larger group-dependent constant $\kappa$. This proves (ii), and convolution with $F_N$ has range contained in the finite-dimensional space generated by the spectral support of $F_N$.

Third, by Peter-Weyl orthogonality,
\[
 Z_N=\sum_{\pi\in\widehat G}
 \vartheta\!\left(\frac{\sqrt{c_\pi}}{N}\right)^2d_\pi^2.
\]
Since $\vartheta\equiv1$ on $[0,1]$ and is supported in $[0,2]$, the Weyl estimate for the Casimir spectral counting function yields
\begin{equation}
 cN^d\le Z_N\le CN^d.
\label{eq:ZN_Weyl}
\end{equation}
We use the standard localization estimate for $C_c^\infty$ spectral multipliers of the Casimir operator on compact Lie groups, or more generally on compact manifolds with Gaussian heat-kernel bounds (see, e.g., \cite{FilbirMhaskar2010,Grigoryan2009}). Applied to the multiplier $\lambda\mapsto\vartheta(\sqrt\lambda/N)$, it gives, for every $A>0$,
\begin{equation}
 |D_N(g)|\le C_A\frac{N^d}{(1+N\dist_G(g,e))^A},
 \quad g\in G,
\label{eq:DN_localization}
\end{equation}
with constants independent of $N$. Combining \eqref{eq:DN_localization} with the lower bound in \eqref{eq:ZN_Weyl}, and then choosing $A$ sufficiently large, gives \eqref{eq:FN_localization}. Finally, integrating \eqref{eq:FN_localization} over $B(e,N^{-1})$ and over the dyadic annuli
\[
 \{2^j/N\le \dist_G(g,e)<2^{j+1}/N\},\quad j\ge0,
\]
and taking $L>d+q$ yields \eqref{eq:FN_moments}.

Fourth, convolution against a probability density is positivity preserving, constant preserving and an $L^\infty$ contraction. Thus $\mathcal M_N$ is monotone and satisfies \eqref{eq:MN_contract}. The bi-invariance of the metric gives
\[
 |\mathcal M_Nf(g)-\mathcal M_Nf(h)|
 \le\int_GF_N(z)|f(gz)-f(hz)|\,d\mu(z)
 \le\Lip(f)\dist_G(g,h),
\]
which proves the Lipschitz contraction. Similarly,
\[
 |\mathcal M_Nf(g)-f(g)|
 \le\Lip(f)\int_GF_N(z)\dist_G(z,e)\,d\mu(z),
\]
and the Jackson estimate in \eqref{eq:MN_Jackson} follows from \eqref{eq:FN_moments} with $q=1$.
\end{proof}

\begin{remark}[Fourier multipliers]
\label{rem:positive_filter_multiplier}
The kernel $F_N$ is central, hence $\mathcal M_N$ acts by a scalar multiplier $m_N(\tau)$ on every irreducible block.  If $N_{\pi\rho}^{\tau}$ denotes the multiplicity of $\tau$ in $\pi\otimes\rho$, then
\begin{equation}
 m_N(\tau)=\frac{1}{Z_Nd_\tau}
 \sum_{\pi,\rho\in\widehat G}
 \vartheta\!\left(\frac{\sqrt{c_\pi}}{N}\right)
 \vartheta\!\left(\frac{\sqrt{c_\rho}}{N}\right)
 d_\pi d_\rho N_{\pi\rho}^{\tau}.
\label{eq:positive_filter_multiplier}
\end{equation}
The sum is finite and $m_N(\tau)=0$ for $\sqrt{c_\tau}>\kappa N$.  Thus the Markov filter is implemented by the same blockwise scalar multiplication as the heat and de la Vall\'ee-Poussin filters.  On $\SO{3}$ the multiplicities are given by the Clebsch-Gordan rule and can be precomputed once for each $N$.
\end{remark}

We combine the finite-rank Markov filter with the viscous heat step.  Put
\begin{equation}
 \mathcal Q_{\delta,N}^{\eps}
 :=\mathcal M_Ne^{\delta\eps\Lap_G}
 =e^{\delta\eps\Lap_G}\mathcal M_N.
\label{eq:Q_def}
\end{equation}
The two factors commute because both are central spectral multipliers.  The operator $\mathcal Q_{\delta,N}^{\eps}$ is again finite-rank and Markov.  Its convolution kernel is
\[
 q_{\delta,N}^{\eps}:=F_N*p_{\delta\eps},
\]
where $p_a$ is the heat kernel at time $a$.  It is central and inversion invariant.  From \eqref{eq:FN_moments}, the heat-kernel second-moment estimate, and
$\dist_G(e,xy)^2\le2\dist_G(e,x)^2+2\dist_G(e,y)^2$, we obtain
\begin{equation}
 m_2(q_{\delta,N}^{\eps})
 :=\int_G\dist_G(e,z)^2q_{\delta,N}^{\eps}(z)\,\mathrm d\mu(z)
 \le C_G\bigl(\delta\eps+N^{-2}\bigr).
\label{eq:Q_second_moment}
\end{equation}
This suggests the effective noise intensity
\begin{equation}
 \nu_{\delta,N}^{\eps}
 :=\eps+\frac{1}{\delta N^2},
 \quad
 \delta\nu_{\delta,N}^{\eps}=\delta\eps+N^{-2}.
\label{eq:effective_viscosity}
\end{equation}
For the heat filter $e^{\Lap_G/N^2}$ this interpretation is exact at the operator level, since
\[
 e^{\Lap_G/N^2}e^{\delta\eps\Lap_G}
 =e^{\delta\nu_{\delta,N}^{\eps}\Lap_G}.
\]
For the finite-rank Markov filter it remains exact at the scale of the one-step second moment, which is all that is needed in the coupling argument below.

We formulate the scheme in a way that makes its dynamic-programming interpretation transparent.  Define the prefiltered values $U_{\delta,N,k}^{\eps}$ and the stored band-limited continuations $Z_{\delta,N,k}^{\eps}$ by
\begin{equation}
 \left\{
 \begin{aligned}
 &U_{\delta,N,K}^{\eps}:=\phi,
 \qquad
 Z_{\delta,N,K}^{\eps}:=\mathcal Q_{\delta,N}^{\eps}\phi,\\
 &U_{\delta,N,k}^{\eps}:=\mathcal T_\delta Z_{\delta,N,k+1}^{\eps},
 \qquad
 Z_{\delta,N,k}^{\eps}:=\mathcal Q_{\delta,N}^{\eps}U_{\delta,N,k}^{\eps},
 \quad k=K-1,\ldots,0.
 \end{aligned}
 \right.
\label{eq:Markov_filtered_scheme}
\end{equation}
Equivalently,
\[
 U_{\delta,N,k}^{\eps}
 =\mathcal T_\delta\mathcal Q_{\delta,N}^{\eps}
 U_{\delta,N,k+1}^{\eps},
 \quad
 Z_{\delta,N,k}^{\eps}
 =\mathcal Q_{\delta,N}^{\eps}\mathcal T_\delta
 Z_{\delta,N,k+1}^{\eps}.
\]
Only $Z_{\delta,N,k}^{\eps}$ is stored, and it belongs to $X_{\kappa N}$.  The quantity $U_{\delta,N,k}^{\eps}$ is the value before the filtering/noise step and is obtained from the stored continuation by one Bellman evaluation.  This distinction removes the apparent reversal between ``filter after Bellman'' in the coefficient recursion and ``noise after control'' in the stochastic dynamic-programming interpretation.

\subsection{A quantitative convergence theorem}
\label{ssec:markov_convergence}

The main estimate uses a controlled-chain coupling.  We state it for a general small symmetric Markov perturbation, because the same result will also cover the fully discrete cubature scheme of \cref{ssec:fully_discrete_markov}.

Choose once and for all a faithful finite-dimensional unitary representation
$\rho:G\to U(r)$.  Since $G$ is compact, there are constants $c_\rho,C_\rho>0$ such that
\begin{equation}
 c_\rho\dist_G(g,h)
 \le\|\rho(g)-\rho(h)\|_{\mathrm F}
 \le C_\rho\dist_G(g,h),
 \quad g,h\in G.
\label{eq:faithful_embedding_metric}
\end{equation}

\begin{lemma}[Controlled small-noise comparison]
\label{lem:controlled_small_noise}
Let $P$ be a Feller Markov transition kernel on $G$, so that $Pf\in C(G)$ for every $f\in C(G)$. Suppose that for some $a\in(0,1]$ and every $y\in G$,
\begin{align}
 \int_G\|\rho(z)-\rho(y)\|_{\mathrm F}^2P(y,\mathrm dz)&\le C_Pa,
\label{eq:transition_variance}\\
 \left\|\int_G\bigl(\rho(z)-\rho(y)\bigr)P(y,\mathrm dz)\right\|_{\mathrm F}
 &\le C_Pa.
\label{eq:transition_drift}
\end{align}
Let $U^P_{\delta,K}=\phi$ and
\begin{equation}
 U^P_{\delta,k}(g)
 :=\inf_{u\in U}\left\{
 \delta\ell(g,u)+
 (PU^P_{\delta,k+1})(\Phi_\delta^u(g))
 \right\}.
\label{eq:noisy_DPP}
\end{equation}
Then, for $\delta\in(0,1]$ and $K\delta=T$,
\begin{equation}
 \max_{0\le k\le K}
 \|U^P_{\delta,k}-V_\delta(\cdot,t_k)\|_{L^\infty(G)}
 \le C_T\left(\sqrt{\frac{a}{\delta}}+\frac{a}{\delta}\right),
\label{eq:small_noise_DPP_bound}
\end{equation}
where $C_T$ depends only on the data in \cref{ass:standing}, $T$, $G$, $\rho$, and $C_P$.
\end{lemma}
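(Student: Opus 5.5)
Our plan is a coupling of two controlled processes, as announced before the statement. We write $U^P_{\delta,k}$ as the value function of a controlled Markov chain and $V_\delta(\cdot,t_k)$ as the value of the deterministic discrete chain. Unrolling \eqref{eq:noisy_DPP} shows that $U^P_{\delta,k}(g)$ is the infimum, over feedback policies $u_j=u_j(X_j)$, of $\mathbb E[\sum_{j=k}^{K-1}\delta\ell(X_j,u_j)+\phi(X_K)]$. Here $X_k=g$, and $X_{j+1}$ is drawn from $P(\Phi^{u_j}_\delta(X_j),\cdot)$. Measurable selection of near-minimizers is harmless because $P$ is Feller and everything is continuous, so we may work with $\eta$-optimal Borel policies and let $\eta\to0$. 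Similarly, $V_\delta(g,t_k)$ is the infimum over (open-loop) control sequences of the cost of the deterministic chain $Y_{j+1}=\Phi^{u_j}_\delta(Y_j)$.

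We prove the two one-sided bounds by transferring policies.

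For $U^P\le V_\delta+\mathrm{err}$, we take a near-optimal open-loop sequence $(u_j)$ for the deterministic problem and apply the same controls to the noisy chain. For the reverse bound, we take a near-optimal feedback policy for the noisy chain and let the deterministic chain use, at each step, the control $u_j(X_j)$ evaluated along a realization of the noisy chain. This gives a random but admissible sequence, so $V_\delta\le$ the cost for every $\omega$, and we then take expectations.

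In both cases the chains are driven by common controls, and we must bound $\mathbb E\,\dist_G(X_j,Y_j)$. By the Lipschitz bounds on $\ell$ and $\phi$, the cost difference is then at most $(L_\ell T+L_\phi)\max_j\mathbb E\,\dist_G(X_j,Y_j)$.

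The key step, and the main obstacle, is the estimate
\[
\mathbb E\,\dist_G(X_j,Y_j)^2\le C_T\bigl(a/\delta+(a/\delta)^2\bigr),
\]
obtained in the embedding $\rho$ via \eqref{eq:faithful_embedding_metric}. We set $D_j:=\rho(X_j)-\rho(Y_j)$ and $\tilde X_j:=\Phi^{u_j}_\delta(X_j)$. We decompose
\[
D_{j+1}=\bigl[\rho(\tilde X_j)-\rho(\Phi^{u_j}_\delta(Y_j))\bigr]+\bigl(\mathbb E[\rho(X_{j+1})\mid\mathcal F_j]-\rho(\tilde X_j)\bigr)+M_{j+1}.
\]
Here $M_{j+1}$ is a martingale increment with $\mathbb E\|M_{j+1}\|_{\mathrm F}^2\le C_Pa$ by \eqref{eq:transition_variance}, and the middle drift term has norm at most $C_Pa$ by \eqref{eq:transition_drift}. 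The first bracket has norm at most $e^{L_\xi\delta}C_\rho c_\rho^{-1}\|D_j\|_{\mathrm F}$ by the flow Lipschitz estimate (as in \cref{prop:Tdelta_properties}).

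The delicate point is that the constant $C_\rho/c_\rho$ must not be compounded at every step. To avoid this, we would instead use a Lipschitz bound of the flow in the embedded Frobenius distance directly, namely $\|\rho(\Phi^u_\delta(x))-\rho(\Phi^u_\delta(y))\|_{\mathrm F}\le(1+C\delta)\|\rho(x)-\rho(y)\|_{\mathrm F}$. We would derive this from $\frac{d}{dt}\rho(\Phi^u_t(x))=\rho(\Phi^u_t(x))\,d\rho(\xi(\Phi^u_t(x),u))$, using the Lipschitz continuity of $\xi$, unitarity, and \eqref{eq:faithful_embedding_metric} once.

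Expanding the square, the cross term with $M_{j+1}$ vanishes in expectation. Writing $e_j:=\mathbb E\|D_j\|^2$, we get
\[
e_{j+1}\le(1+C\delta)^2e_j+2(1+C\delta)C_Pa\sqrt{e_j}+C a+Ca^2 .
\]
Using $2a\sqrt{e_j}\le\delta e_j+a^2/\delta$, this becomes $e_{j+1}\le(1+C'\delta)e_j+C(a+a^2/\delta)$. Discrete Gronwall over $K=T/\delta$ steps then yields
\[
e_j\le C_T(a/\delta+a^2/\delta^2).
\]
Taking square roots, Jensen gives $\mathbb E\,\dist_G\le c_\rho^{-1}\sqrt{e_j}\le C_T(\sqrt{a/\delta}+a/\delta)$, which is \eqref{eq:small_noise_DPP_bound}.

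The remaining routine points are the following. The common randomness must be placed on one probability space for both transfers, which is trivial here since the deterministic chain is adapted. The initial step $k=K$ is exact.
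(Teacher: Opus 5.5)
Your proposal is correct and follows essentially the same route as the paper: the controlled-chain interpretation, transfer of an $\eta$-optimal open-loop sequence for one inequality and pathwise comparison with $V_\delta$ along the realized controls for the other, the embedded-metric flow bound with factor $(1+C\delta)$ that avoids compounding $C_\rho/c_\rho$ at every step, and the second-moment recursion closed by $2a\sqrt{e_j}\le\delta e_j+a^2/\delta$ and discrete Gronwall. The differences are only cosmetic. You restrict to measurably selected Markov feedback policies, whereas the paper allows history-dependent randomized ones. You also split the noise increment into a drift part and a martingale part, whereas the paper bounds the cross term using the conditional moments of $B_{n+1}$.
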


\begin{proof}
The recursion \eqref{eq:noisy_DPP} is the dynamic programming equation of a controlled Markov chain. At state $X_n$, a control $u_n$ is chosen, the deterministic transport point is
\[
 Y_{n+1}=\Phi_\delta^{u_n}(X_n),
\]
the running cost $\delta\ell(X_n,u_n)$ is paid, and $X_{n+1}$ is sampled from $P(Y_{n+1},\cdot)$. We allow policies to be history-dependent and randomized. This does not change the deterministic value $V_\delta$. Since $V_\delta$ satisfies the pointwise dynamic programming recursion, randomization and history dependence cannot lower the deterministic infimum, and deterministic Markov selectors are optimal up to an arbitrarily small error.

Fix an admissible policy for the noisy chain. On the same probability space define the noise-free trajectory driven by the same realized controls,
\[
 \overline X_{n+1}=\Phi_\delta^{u_n}(\overline X_n),
 \quad \overline X_0=X_0.
\]
We first compare $X_n$ and $\overline X_n$. For a fixed constant control $u$, the embedded curve $s\mapsto\rho(\Phi_s^u(g))$ solves
\[
 \frac{d}{ds}\rho(\Phi_s^u(g))
 =\rho(\Phi_s^u(g))\,d\rho\bigl(\xi(\Phi_s^u(g),u)\bigr).
\]
Using the Lipschitz bound on $\xi$, the equivalence of the embedded and Riemannian distances in \eqref{eq:faithful_embedding_metric}, and Gronwall's inequality, we obtain the one-step Lipschitz estimate
\begin{equation}
 \|\rho(\Phi_\delta^u(g))-\rho(\Phi_\delta^u(h))\|_{\mathrm F}
 \le(1+C\delta)\|\rho(g)-\rho(h)\|_{\mathrm F}.
\label{eq:flow_embedding_lip}
\end{equation}
Set $D_n:=\rho(X_n)-\rho(\overline X_n)$ and
\[
 \widetilde Y_{n+1}:=\Phi_\delta^{u_n}(\overline X_n).
\]
Then
\[
 D_{n+1}
 =\underbrace{\rho(Y_{n+1})-\rho(\widetilde Y_{n+1})}_{A_{n+1}}
 +\underbrace{\rho(X_{n+1})-\rho(Y_{n+1})}_{B_{n+1}}.
\]
By \eqref{eq:flow_embedding_lip}, $\|A_{n+1}\|_{\mathrm F}\le(1+C\delta)\|D_n\|_{\mathrm F}$. By \eqref{eq:transition_variance}--\eqref{eq:transition_drift}, conditionally on the past $\mathcal F_n$,
\[
 \mathbb E(\|B_{n+1}\|_{\mathrm F}^2\mid\mathcal F_n)\le C_Pa,
 \quad
 \|\mathbb E(B_{n+1}\mid\mathcal F_n)\|_{\mathrm F}\le C_Pa.
\]
Expanding $\|A_{n+1}+B_{n+1}\|^2$ and estimating the cross term gives
\[
 2\langle A_{n+1},\mathbb E(B_{n+1}\mid\mathcal F_n)\rangle
 \le C a\|D_n\|_{\mathrm F}
 \le \delta\|D_n\|_{\mathrm F}^2+C\frac{a^2}{\delta}.
\]
Consequently
\begin{equation}
 \mathbb E\bigl[\|D_{n+1}\|_{\mathrm F}^2\mid\mathcal F_n\bigr]
 \le(1+C\delta)\|D_n\|_{\mathrm F}^2
 +C\left(a+\frac{a^2}{\delta}\right).
\label{eq:coupling_recursion}
\end{equation}
Since $D_0=0$, discrete Gronwall and $n\delta\le T$ yield
\[
 \max_{0\le n\le K}\mathbb E\|D_n\|_{\mathrm F}^2
 \le C_T\left(\frac{a}{\delta}+\frac{a^2}{\delta^2}\right).
\]
Using \eqref{eq:faithful_embedding_metric} and Jensen's inequality,
\begin{equation}
 \max_{0\le n\le K}\mathbb E\dist_G(X_n,\overline X_n)
 \le C_T\left(\sqrt{\frac{a}{\delta}}+\frac{a}{\delta}\right).
\label{eq:state_coupling}
\end{equation}

The Lipschitz estimates on $\ell$ and $\phi$ now transfer \eqref{eq:state_coupling} to costs. Let $J^P(g,k,\varpi)$ be the expected noisy cost generated by a policy $\varpi$ from state $g$ at step $k$, and let $\overline J(g,k,\varpi)$ be the cost of the coupled noise-free trajectory driven by the same realized controls. Then
\[
 |J^P(g,k,\varpi)-\overline J(g,k,\varpi)|
 \le C_T\left(\sqrt{\frac{a}{\delta}}+\frac{a}{\delta}\right).
\]
For every realization of the controls, the noise-free cost is bounded below by the deterministic discrete value $V_\delta(g,t_k)$. Hence, taking expectations and then the infimum over noisy policies gives
\[
 U^P_{\delta,k}(g)\ge V_\delta(g,t_k)
 -C_T\left(\sqrt{\frac{a}{\delta}}+\frac{a}{\delta}\right).
\]
Conversely, fix $\eta>0$. Since the discrete problem \eqref{eq:DPP_discrete} is deterministic, there is an open-loop control sequence $(u^{\eta}_{k},\dots,u^{\eta}_{K-1})\in U^{K-k}$ whose noise-free trajectory issued from $g$ has cost at most $V_\delta(g,t_k)+\eta$. We use this sequence, which does not depend on the state, as an admissible policy for the noisy chain. The coupled noise-free trajectory driven by the same controls is then precisely the $\eta$-optimal deterministic trajectory, so its cost is at most $V_\delta(g,t_k)+\eta$, and the coupling estimate gives
\[
 U^P_{\delta,k}(g)\le V_\delta(g,t_k)+\eta
 +C_T\left(\sqrt{\frac{a}{\delta}}+\frac{a}{\delta}\right).
\]
Letting $\eta\downarrow0$ proves \eqref{eq:small_noise_DPP_bound}.
\end{proof}

\begin{theorem}[Finite-rank convergence for Lipschitz data]
\label{thm:finite_rank_lipschitz_rate}
Under Assumption~\ref{ass:standing}, let $U_{\delta,N,k}^{\eps}$ and $Z_{\delta,N,k}^{\eps}$ be defined by \eqref{eq:Markov_filtered_scheme}.  There exists $C_T<\infty$, independent of $\delta$, $N$, and $\eps$, such that, for 
\[
 0<\delta\le1,
 \quad
 0\le\nu_{\delta,N}^{\eps}
 =\eps+(\delta N^2)^{-1}\le1,
\]
one has
\begin{align}
 \max_{0\le k\le K}
 \|U_{\delta,N,k}^{\eps}-V(\cdot,t_k)\|_{L^\infty(G)}
 &\le C_T\left(
 \sqrt\delta+\sqrt{\nu_{\delta,N}^{\eps}}
 +\nu_{\delta,N}^{\eps}
 \right),
\label{eq:main_unconditional_U}\\
 \max_{0\le k\le K}
 \|Z_{\delta,N,k}^{\eps}-V(\cdot,t_k)\|_{L^\infty(G)}
 &\le C_T\left(
 \sqrt\delta+\sqrt{\nu_{\delta,N}^{\eps}}
 +\nu_{\delta,N}^{\eps}
 \right).
\label{eq:main_unconditional_Z}
\end{align}
In particular, the scheme converges uniformly to the viscosity solution under the three conditions
\begin{equation}
 \delta\longrightarrow0,
 \quad
 \eps\longrightarrow0,
 \quad
 \delta N^2\longrightarrow\infty.
\label{eq:unconditional_coupling}
\end{equation}
\end{theorem}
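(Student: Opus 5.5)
The plan is to split the error through the time-discrete deterministic value $V_\delta$:
\[
\|U_{\delta,N,k}^{\eps}-V(\cdot,t_k)\|_\infty\le\|U_{\delta,N,k}^{\eps}-V_\delta(\cdot,t_k)\|_\infty+\|V_\delta(\cdot,t_k)-V(\cdot,t_k)\|_\infty .
\]
The second term is at most $C\sqrt\delta$ by \cref{thm:DPP_convergence}. For the first term, observe that the recursion $U_{\delta,N,k}^{\eps}=\mathcal T_\delta\mathcal Q^{\eps}_{\delta,N}U^{\eps}_{\delta,N,k+1}$ with $U_{\delta,N,K}^{\eps}=\phi$ is exactly \eqref{eq:noisy_DPP}. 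There the Markov kernel is $P(y,\mathrm dz)=q^{\eps}_{\delta,N}(y^{-1}z)\,\mathrm d\mu(z)$, the right convolution kernel of $\mathcal Q^{\eps}_{\delta,N}$; since the kernel is central, left and right conventions agree. This $P$ is Feller because $\mathcal Q$ maps $C(G)$ into $X_{\kappa N}\subset C(G)$. We will then apply \cref{lem:controlled_small_noise} with $a:=\delta\nu^{\eps}_{\delta,N}=\delta\eps+N^{-2}\le\delta\le1$, so that $a/\delta=\nu$ and \eqref{eq:small_noise_DPP_bound} gives $C_T(\sqrt\nu+\nu)$. Adding the two terms yields \eqref{eq:main_unconditional_U}.

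\textbf{Variance hypothesis \eqref{eq:transition_variance}.} Writing $z=yw$ and using unitarity together with \eqref{eq:faithful_embedding_metric}, we get $\|\rho(yw)-\rho(y)\|_{\mathrm F}=\|\rho(w)-I\|_{\mathrm F}\le C_\rho\dist_G(w,e)$. The second moment is therefore bounded by $C_\rho^2m_2(q^{\eps}_{\delta,N})\le C(\delta\eps+N^{-2})=Ca$, by \eqref{eq:Q_second_moment}.

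\textbf{Drift hypothesis \eqref{eq:transition_drift}.} This is the main obstacle, because a first-moment bound would only give $O(\sqrt a)$. The first-order term has to cancel, and we will obtain the cancellation from inversion invariance. The drift equals $\rho(y)\int(\rho(w)-I)\,q(w)\,\mathrm d\mu(w)$. Because $q(w)=q(w^{-1})$, this integral is $\tfrac12\int(\rho(w)+\rho(w^{-1})-2I)\,q(w)\,\mathrm d\mu(w)$.

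For $w$ near $e$, write $w=\exp X$ with $|X|=\dist_G(w,e)$. Then $\rho(w)+\rho(w)^{-1}-2I=d\rho(X)^2+O(|X|^3)$, which is $O(\dist_G(w,e)^2)$. For $w$ outside a fixed neighbourhood, the integrand is bounded by a constant, which is itself at most $C\dist_G(w,e)^2$ since the distance is bounded below there. Hence $\|\rho(w)+\rho(w^{-1})-2I\|_{\mathrm F}\le C\dist_G(w,e)^2$ on all of $G$. Integrating gives the drift bound $Cm_2(q)\le C a$.

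For the $Z$ estimate, note that $Z_{\delta,N,k}^{\eps}=\mathcal Q^{\eps}_{\delta,N}U^{\eps}_{\delta,N,k}$. Hence
\[
\|Z_{\delta,N,k}^{\eps}-V(\cdot,t_k)\|\le\|U_{\delta,N,k}^{\eps}-V(\cdot,t_k)\|+\|\mathcal Q^{\eps}_{\delta,N} V(\cdot,t_k)-V(\cdot,t_k)\|,
\]
using that $\mathcal Q$ is a sup-norm contraction. The last term is at most $\Lip(V)\int\dist_G(w,e)\,q\le L\sqrt{m_2}\le C\sqrt a\le C\sqrt{\delta\nu}\le C\sqrt\nu$, by Cauchy--Schwarz and the Lipschitz bound on $V$. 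This proves \eqref{eq:main_unconditional_Z}.

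Finally, under \eqref{eq:unconditional_coupling} we have $\nu\to0$, and the right-hand sides tend to zero, which gives uniform convergence.
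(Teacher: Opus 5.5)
Your proof is correct and follows the paper's argument: you reduce to \cref{lem:controlled_small_noise} with $a=\delta\eps+N^{-2}$, get the drift hypothesis by symmetrizing with the inversion invariance of $q^{\eps}_{\delta,N}$, add the $\sqrt\delta$ term from \cref{thm:DPP_convergence}, and treat $Z$ by one further application of the contraction $\mathcal Q^{\eps}_{\delta,N}$ together with a first-moment bound. The only cosmetic differences are that the paper uses the exact identity $\rho(w)+\rho(w)^{*}-2I=-(\rho(w)-I)^{*}(\rho(w)-I)$ in place of your Taylor and far-field argument, and that it compares $Z$ with $V_\delta$ rather than directly with $V$.
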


\begin{proof}
Let $P_{\delta,N}^{\eps}$ be the transition kernel corresponding to convolution by $q_{\delta,N}^{\eps}$.  It is central and inversion invariant.  Put
\[
 A_\rho:=\int_G\rho(z)q_{\delta,N}^{\eps}(z)\,\mathrm d\mu(z).
\]
Inversion invariance gives $A_\rho=A_\rho^*$ and hence the exact identity
\[
 I_r-A_\rho
 =\frac12\int_G(\rho(z)-I_r)^*(\rho(z)-I_r)
 q_{\delta,N}^{\eps}(z)\,\mathrm d\mu(z).
\]
Consequently,
\[
 \|I_r-A_\rho\|
 \le C\int_G\|\rho(z)-I_r\|_{\mathrm F}^2
 q_{\delta,N}^{\eps}(z)\,\mathrm d\mu(z)
 \le C\bigl(\delta\eps+N^{-2}\bigr).
\]
Together with \eqref{eq:Q_second_moment}, this verifies
\eqref{eq:transition_variance}--\eqref{eq:transition_drift} with
$a=\delta\eps+N^{-2}=\delta\nu_{\delta,N}^{\eps}\le1$, the constants being absorbed into $C_{P}$.
The recursion for $U_{\delta,N,k}^{\eps}$ is \eqref{eq:noisy_DPP}.  Hence \cref{lem:controlled_small_noise} gives
\begin{equation}
 \max_k\|U_{\delta,N,k}^{\eps}-V_\delta(\cdot,t_k)\|_\infty
 \le C_T\left(
 \sqrt{\nu_{\delta,N}^{\eps}}+\nu_{\delta,N}^{\eps}
 \right).
\label{eq:U_vs_Vdelta}
\end{equation}
Adding the deterministic time-discretization estimate
$\max_k\|V_\delta(\cdot,t_k)-V(\cdot,t_k)\|_\infty\le C\sqrt\delta$
from \cref{thm:DPP_convergence} proves \eqref{eq:main_unconditional_U}.

To estimate the stored continuation, write
\[
 Z_{\delta,N,k}^{\eps}(g)-V_\delta(g,t_k)
 =\int_G\bigl(U_{\delta,N,k}^{\eps}(gz)-V_\delta(gz,t_k)\bigr)
 q_{\delta,N}^{\eps}(z)\,\mathrm d\mu(z)
 +\mathcal R_k(g).
\]
The first term is bounded by \eqref{eq:U_vs_Vdelta}.  Since the deterministic discrete values have a Lipschitz constant bounded uniformly in $k$ and $\delta$ by \cref{prop:Tdelta_properties},
\[
 |\mathcal R_k(g)|
 \le C\int_G\dist_G(z,e)q_{\delta,N}^{\eps}(z)\,\mathrm d\mu(z)
 \le C\sqrt{\delta\nu_{\delta,N}^{\eps}}.
\]
This is dominated by $C\sqrt{\nu_{\delta,N}^{\eps}}$ for $\delta\le1$, and \eqref{eq:main_unconditional_Z} follows.
\end{proof}

\begin{corollary}[A canonical parameter choice]
\label{cor:canonical_scaling}
The artificial viscosity may be set equal to zero, because the finite-rank Markov filter itself supplies the regularization needed for stability.  With
\[
 \eps=0,
 \quad
 \delta=N^{-1},
\]
\cref{thm:finite_rank_lipschitz_rate} gives
\begin{equation}
 \max_k\left(
 \|U_{\delta,N,k}^{0}-V(\cdot,t_k)\|_\infty
 +\|Z_{\delta,N,k}^{0}-V(\cdot,t_k)\|_\infty
 \right)
 \le C_TN^{-1/2}.
\label{eq:canonical_rate}
\end{equation}
More generally, if $\eps\lesssim\delta$ and $\delta\asymp N^{-1}$, the same rate holds.
\end{corollary}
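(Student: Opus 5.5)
The plan is to substitute the parameter choice directly into the bounds \eqref{eq:main_unconditional_U}--\eqref{eq:main_unconditional_Z} of \cref{thm:finite_rank_lipschitz_rate}. No new analysis is needed. The only task is to check the hypotheses $0<\delta\le1$ and $0\le\nu_{\delta,N}^{\eps}\le1$ and then simplify the right-hand side.

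Consider first $\eps=0$ and $\delta=N^{-1}$. For $N\ge1$ we have $\delta\le1$ and
\[
\nu_{\delta,N}^{0}=\frac{1}{\delta N^{2}}=\frac{1}{N^{-1}N^{2}}=N^{-1}\le1,
\]
so the theorem applies. A technical point is that $\delta=T/K$ must hold for an integer $K$. I would therefore read $\delta=N^{-1}$ as $\delta=T/\lceil TN\rceil$. This choice satisfies $\delta\le N^{-1}$ and $\delta\ge N^{-1}/2$ once $TN\ge1$. Consequently $\nu\le 2N^{-1}$, and $\nu\le1$ holds for $N\ge2$. The finitely many remaining small $N$ are absorbed into the constant, since all iterates and $V$ are bounded by $C_{0}$.

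The right-hand side of \eqref{eq:main_unconditional_U} then equals $C_T(\sqrt\delta+\sqrt\nu+\nu)$, which is bounded by $C_T(N^{-1/2}+\sqrt2N^{-1/2}+2N^{-1})\le C_TN^{-1/2}$. The same bound holds for \eqref{eq:main_unconditional_Z}. Adding the two estimates gives \eqref{eq:canonical_rate}, after doubling $C_T$.

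For the general statement, assume $\eps\le c\,\delta$ and $c_{1}N^{-1}\le\delta\le c_{2}N^{-1}$. Then
\[
\nu=\eps+\frac{1}{\delta N^{2}}\le c\,\delta+\frac{1}{c_{1}N}\lesssim N^{-1}.
\]
In this case $\sqrt\delta$, $\sqrt\nu$ and $\nu$ are all $O(N^{-1/2})$. The condition $\nu\le1$ holds for $N$ large, and small $N$ are again absorbed into the constant.

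The remark that the viscosity can be set to zero needs no separate argument. \cref{thm:finite_rank_lipschitz_rate} explicitly allows $\eps=0$, since $\mathcal Q^{0}_{\delta,N}=\mathcal M_N$ is still a Markov filter with second moment $O(N^{-2})$ by \eqref{eq:FN_moments}. There is no real obstacle in this proof. The only care needed is the integrality of $K$ and the $\nu\le1$ restriction for small $N$.
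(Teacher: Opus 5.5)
Your proposal is correct and is the paper's argument: the corollary is a direct substitution of $\eps=0$, $\delta=N^{-1}$ (so that $\nu_{\delta,N}^{0}=N^{-1}$) into \eqref{eq:main_unconditional_U}--\eqref{eq:main_unconditional_Z}, followed by bounding $\sqrt\delta+\sqrt\nu+\nu$ by $C N^{-1/2}$. The paper leaves implicit two points that you handle correctly: $K=T/\delta$ must be an integer (for instance $\delta=T/\lceil TN\rceil$, or $\delta=T/\lfloor TN\rceil$ as in the experiments), and small $N$, where the hypothesis $\nu\le1$ can fail, are absorbed into the constant using the uniform bounds on the iterates and on $V$.
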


\begin{lemma}[Second-order consistency of the Markov filter]
\label{lem:markov_filter_C2}
For every $f\in C^2(G)$ and every $0\le\delta\eps\le1$,
\begin{equation}
 \|\mathcal Q_{\delta,N}^{\eps}f-f\|_{L^\infty(G)}
 \le C_G\bigl(\delta\eps+N^{-2}\bigr)\|f\|_{C^2(G)}.
\label{eq:Q_C2_consistency}
\end{equation}
\end{lemma}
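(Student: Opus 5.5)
We would write $\mathcal Q_{\delta,N}^{\eps}f(g)-f(g)=\int_G\bigl(f(gz)-f(g)\bigr)q(z)\,\mathrm d\mu(z)$ with $q:=q_{\delta,N}^{\eps}=F_N*p_{\delta\eps}$. This is a probability density that is central and inversion invariant. The plan is to Taylor expand $z\mapsto f(gz)$ to second order around $z=e$, so that the zeroth order cancels by unit mass. The first-order term should vanish by the symmetry $q(z)=q(z^{-1})$, and the second-order remainder is then controlled by the second moment \eqref{eq:Q_second_moment}, namely $m_2(q)\le C_G(\delta\eps+N^{-2})$.

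\textbf{Step 1 (local expansion).} Let $\iota>0$ be the injectivity radius, and fix $r_0<\iota$ so that $\exp$ is a diffeomorphism from the ball $\{\|X\|_{\Lie}<r_0\}$ onto $B(e,r_0)$. For the bi-invariant metric, $\dist_G(e,\exp X)=\|X\|_{\Lie}$ there. For $z=\exp X$ in this ball, the curve $t\mapsto g\exp(tX)$ is an integral curve of the left-invariant field $\widetilde X$, so
\[
f(g\exp X)-f(g)=(\widetilde Xf)(g)+R(g,X),\quad |R(g,X)|\le \tfrac12\|X\|_{\Lie}^2\,\|f\|_{C^2(G)},
\]
where $\|f\|_{C^2}$ bounds the second left-invariant derivatives $\widetilde X_i\widetilde X_jf$. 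Outside the ball we only use $|f(gz)-f(g)|\le2\|f\|_\infty\le 2r_0^{-2}\|f\|_{C^2}\dist_G(e,z)^2$.

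\textbf{Step 2 (cancellation of the linear term).} The linear contribution is $\int_{B(e,r_0)}\inner{\nabla_G^{\mathrm{body}}f(g)}{\log z}_{\Lie}\,q(z)\,\mathrm d\mu(z)$, where $\log$ is the inverse of $\exp$ on $B(e,r_0)$. Three facts give cancellation:
\begin{itemize}[leftmargin=2em]
\item[(a)] $\log(z^{-1})=-\log z$ on $B(e,r_0)$;
\item[(b)] Haar measure on a compact group is inversion invariant;
\item[(c)] $q(z^{-1})=q(z)$.
\end{itemize}
Hence the integral of $\log z\,q(z)$ over the symmetric ball vanishes exactly. Inversion invariance of $q$ holds because $F_N$ and $p_{\delta\eps}$ are both central and inversion invariant, and the convolution of two such functions inherits both properties.

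\textbf{Step 3 (assembling).} Combining Steps 1 and 2,
\[
|\mathcal Q_{\delta,N}^{\eps}f(g)-f(g)|\le \bigl(\tfrac12+2r_0^{-2}\bigr)\|f\|_{C^2}\int_G\dist_G(e,z)^2q(z)\,\mathrm d\mu(z)\le C_G(\delta\eps+N^{-2})\|f\|_{C^2}.
\]
The last inequality is \eqref{eq:Q_second_moment}. That bound rests on two inputs: \eqref{eq:FN_moments} with $q=2$, and the small-time heat-kernel second-moment bound for $\delta\eps\le1$ from \cref{lem:filter_heat}.

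The main obstacle is Step 2. One must make sure the symmetry argument is done intrinsically, avoiding any chart-dependent first-moment bias. This is where inversion invariance of the kernel, rather than mere smallness of its first moment, is essential. Otherwise one only gets the first-order bound $C_1N^{-1}$ of \eqref{eq:MN_Jackson}. Verifying that $F_N*p_{\delta\eps}$ is inversion invariant, via centrality, is routine but necessary.
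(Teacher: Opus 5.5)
Your proof is correct and uses the same mechanism as the paper: inversion invariance of the kernel kills the linear Taylor term on the injectivity ball, and a second-moment bound controls the quadratic remainder. The difference is organizational. You run the symmetric expansion once on the composite kernel $q_{\delta,N}^{\eps}=F_N*p_{\delta\eps}$, using \eqref{eq:Q_second_moment} and bounding the complement of the ball by $2\|f\|_\infty\le 2r_0^{-2}\|f\|_{C^2(G)}\dist_G(e,z)^2$. The paper instead treats the heat factor separately through $\|e^{\delta\eps\Lap_G}f-f\|_\infty\le\delta\eps\|\Lap_Gf\|_\infty$, applies the expansion only to $F_N$ (using its localization for the complement), and combines the two estimates by contractivity.
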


\begin{proof}
The heat part satisfies
\[
 \|e^{\delta\eps\Lap_G}f-f\|_\infty
 \le \delta\eps\|\Lap_G f\|_\infty
 \le C_G\delta\eps\|f\|_{C^2}.
\]
It remains to estimate $\mathcal M_Nf-f$. Since $F_N$ is inversion invariant, the first-order term in the Taylor expansion cancels. More explicitly, on the ball where the logarithm is single-valued, write $z=\exp Y$. The symmetry $F_N(z)=F_N(z^{-1})$ and $\log(z^{-1})=-Y$ cancel the linear part of $f(gz)-f(g)$, leaving a remainder bounded by $C\|f\|_{C^2}|Y|^2$. The contribution of the complement is controlled by the rapid localization \eqref{eq:FN_localization}. Using the second moment estimate \eqref{eq:FN_moments} gives
\[
 \|\mathcal M_Nf-f\|_\infty\le C_GN^{-2}\|f\|_{C^2}.
\]
Combining the two estimates and using the contraction of both factors gives \eqref{eq:Q_C2_consistency}.
\end{proof}

\begin{corollary}[Smooth-regime refinement under a discrete $C^2$ bound]
\label{cor:smooth_markov_refinement}
Under Assumption~\ref{ass:standing}, suppose in addition that the deterministic discrete values satisfy
\begin{equation}
 \sup_{0\le k\le K}\|V_\delta(\cdot,t_k)\|_{C^2(G)}\le M_2
\label{eq:discrete_C2_bound}
\end{equation}
with $M_2$ independent of $\delta$, and that the deterministic semi-Lagrangian error improves to
\begin{equation}
 \max_{0\le k\le K}\|V_\delta(\cdot,t_k)-V(\cdot,t_k)\|_\infty\le C_T\delta.
\label{eq:smooth_DPP_rate_assumption}
\end{equation}
Then the Markov-filtered iterates of \eqref{eq:Markov_filtered_scheme} satisfy
\begin{align}
 \max_{0\le k\le K}\|U_{\delta,N,k}^{\eps}-V(\cdot,t_k)\|_\infty
 &\le C_T\left(\delta+\eps+\frac{1}{\delta N^2}\right),
\label{eq:smooth_markov_U}\\
 \max_{0\le k\le K}\|Z_{\delta,N,k}^{\eps}-V(\cdot,t_k)\|_\infty
 &\le C_T\left(\delta+\eps+\frac{1}{\delta N^2}\right),
\label{eq:smooth_markov_Z}
\end{align}
where $C_T$ may depend on $M_2$ but is independent of $\delta,N,\eps$.
\end{corollary}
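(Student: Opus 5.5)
Since the discrete values satisfy \eqref{eq:smooth_DPP_rate_assumption}, it suffices to prove
\[
\max_k\|U_{\delta,N,k}^{\eps}-V_\delta(\cdot,t_k)\|_\infty\le C_T\bigl(\eps+(\delta N^2)^{-1}\bigr)
\]
and the analogous bound for $Z$. The coupling of \cref{lem:controlled_small_noise} only gives a square-root rate, so I will run a deterministic sup-norm recursion that uses the monotone, non-expansive structure together with the second-order consistency of \cref{lem:markov_filter_C2}.

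Set $E_k:=U_{\delta,N,k}^{\eps}-V_\delta(\cdot,t_k)$. From $U_{\delta,N,k}^{\eps}=\mathcal T_\delta\mathcal Q_{\delta,N}^{\eps}U_{\delta,N,k+1}^{\eps}$ and $V_\delta(\cdot,t_k)=\mathcal T_\delta V_\delta(\cdot,t_{k+1})$ I insert $\mathcal T_\delta\mathcal Q_{\delta,N}^{\eps}V_\delta(\cdot,t_{k+1})$. The sup-norm non-expansiveness of $\mathcal T_\delta$ (\cref{prop:Tdelta_properties}) and of the Markov operator $\mathcal Q_{\delta,N}^{\eps}$ (\cref{thm:finite_rank_markov_filter}(iv) together with the heat contraction) then give
\[
\|E_k\|_\infty\le\|E_{k+1}\|_\infty+\|\mathcal Q_{\delta,N}^{\eps}V_\delta(\cdot,t_{k+1})-V_\delta(\cdot,t_{k+1})\|_\infty .
\]
By \cref{lem:markov_filter_C2} and the bound \eqref{eq:discrete_C2_bound}, the defect is at most $C_GM_2(\delta\eps+N^{-2})$. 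Since $E_K=0$, summing over at most $K=T/\delta$ steps yields
\[
\|E_k\|_\infty\le T C_GM_2\,(\eps+\delta^{-1}N^{-2}).
\]
Adding $\|V_\delta-V\|_\infty\le C_T\delta$ proves \eqref{eq:smooth_markov_U}.

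For $Z$ I write
\[
Z_{\delta,N,k}^{\eps}-V_\delta(\cdot,t_k)=\mathcal Q_{\delta,N}^{\eps}E_k+\bigl(\mathcal Q_{\delta,N}^{\eps}-I\bigr)V_\delta(\cdot,t_k).
\]
The first term is at most $\|E_k\|_\infty$ by contraction. The second is at most $C_GM_2(\delta\eps+N^{-2})$ by \cref{lem:markov_filter_C2}, and this is dominated by $C_T(\eps+(\delta N^2)^{-1})$ because $\delta\le1$. Together with \eqref{eq:smooth_DPP_rate_assumption} this gives \eqref{eq:smooth_markov_Z}.

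The only delicate point is whether \cref{lem:markov_filter_C2} may be applied to $V_\delta(\cdot,t_{k+1})$, and at $k+1=K$ this function is $\phi$. Assumption \eqref{eq:discrete_C2_bound} covers $k=K$ as well, so $\phi\in C^2$ is implicitly assumed. The constant $C_G$ in \cref{lem:markov_filter_C2} is uniform in $N$, which relies on the symmetry $F_N(z)=F_N(z^{-1})$ cancelling the first-order term. With both facts in hand, the argument is a telescoping of per-step consistency errors, and it needs no coupling.
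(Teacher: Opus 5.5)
Your proposal is correct and follows the paper's proof essentially step for step. It uses the same insertion of $\mathcal T_\delta\mathcal Q_{\delta,N}^{\eps}V_\delta(\cdot,t_{k+1})$, telescopes the per-step defect from \cref{lem:markov_filter_C2} over $K=T/\delta$ steps, adds \eqref{eq:smooth_DPP_rate_assumption}, and treats $Z_{\delta,N,k}^{\eps}$ by the same two-term decomposition with contraction. Your observation that \eqref{eq:discrete_C2_bound} at $k=K$ tacitly requires $\phi\in C^2(G)$ is a correct reading of a point the paper leaves implicit.
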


\begin{proof}
Set $E_k:=\|U_{\delta,N,k}^{\eps}-V_\delta(\cdot,t_k)\|_\infty$. Using the recursion $U_k=\mathcal T_\delta\mathcal Q_{\delta,N}^{\eps}U_{k+1}$, the deterministic recursion $V_\delta(\cdot,t_k)=\mathcal T_\delta V_\delta(\cdot,t_{k+1})$, and the non-expansiveness of $\mathcal T_\delta$,
\[
 E_k\le E_{k+1}+
 \|\mathcal Q_{\delta,N}^{\eps}V_\delta(\cdot,t_{k+1})-V_\delta(\cdot,t_{k+1})\|_\infty.
\]
By \cref{lem:markov_filter_C2} and \eqref{eq:discrete_C2_bound}, the second term is bounded by $C(\delta\eps+N^{-2})$. Summing over $K=T/\delta$ steps gives
\[
 \max_kE_k\le C_T\left(\eps+\frac{1}{\delta N^2}\right).
\]
Adding \eqref{eq:smooth_DPP_rate_assumption} proves \eqref{eq:smooth_markov_U}. Finally,
\[
 Z_{\delta,N,k}^{\eps}-V_\delta(\cdot,t_k)
 =\mathcal Q_{\delta,N}^{\eps}(U_{\delta,N,k}^{\eps}-V_\delta(\cdot,t_k))
 +\bigl(\mathcal Q_{\delta,N}^{\eps}V_\delta(\cdot,t_k)-V_\delta(\cdot,t_k)\bigr),
\]
and the contraction of $\mathcal Q_{\delta,N}^{\eps}$ together with \cref{lem:markov_filter_C2} gives the stored-value estimate.  This proves \eqref{eq:smooth_markov_Z}.
\end{proof}

\begin{remark}[Coupling the viscosity and the time step to the resolution]
\label{prop:scaling}
The scheme has three parameters, and refinement is most easily organized along one-parameter families
\begin{equation}
\eps=N^{-\nu_{\eps}},\quad \delta=N^{-\nu_{\delta}},
\quad \nu_{\eps}>0,\ \nu_{\delta}\in(0,2),
\label{eq:admissible_scaling}
\end{equation}
for which the coupling conditions \eqref{eq:unconditional_coupling} hold automatically. The rate in \cref{thm:finite_rank_lipschitz_rate} is then optimized at $\nu_{\delta}=1$, the canonical choice of \cref{cor:canonical_scaling}. In the smooth regime the balance changes, because the time error improves to the first-order Lie-Trotter rate $O(\delta)$, the accumulated bias of the heat filter saturates at $O(KN^{-2})=O(\delta^{-1}N^{-2})$ (\cref{lem:filter_heat}), and the viscous bias improves to $O(\eps)$ (\cref{thm:VV}). Equating the three contributions suggests $\delta\asymp N^{-1}$ together with a viscosity tied to the resolution, $\eps\asymp N^{-1}$. 
\end{remark}

\begin{remark}
A deterministic estimate of the filter defect at every time step, as in the proof of \cref{thm:lowreg}, requires a quantitative Sobolev bound on the iterates, and any such bound on $V^{\eps}$ deteriorates as $\eps\downarrow0$.  The Markov construction instead interprets filtering as a centred small random perturbation.  Independent perturbations accumulate in quadratic mean, producing the scale
\[
 \sqrt{K}\,N^{-1}=\frac{\sqrt T}{N\sqrt\delta},
\]
rather than the deterministic sum $K/N$.  This is the mechanism behind \eqref{eq:main_unconditional_U}, and the reason why neither the regularity of $V$ and $V^{\eps}$ nor the smoothness of the Hamiltonian enters \cref{thm:finite_rank_lipschitz_rate}.
\end{remark}

\subsection{A fully discrete monotone realization}
\label{ssec:fully_discrete_markov}

We next show that the preceding structure can be retained after spatial cubature, control discretization, and numerical flow integration.  The method is then finite-dimensional, and its convergence theorem covers the implemented iteration rather than a semi-discrete core.

Let $\Lambda>\kappa$ be a fixed group-dependent constant large enough for the product spaces occurring below.  We first record that the spatial cubature needed by the method may be chosen positive without any interpolation matrix.

\begin{lemma}[Positive exact cubature]
\label{lem:positive_cubature}
For every $N\ge1$ there exist nodes and positive weights
\begin{equation}
 \mathcal G_N=\{(g_j,w_j)\}_{j=1}^{J_N},
 \quad
 w_j>0,
 \quad
 J_N\le C_GN^d,
\label{eq:positive_cubature}
\end{equation}
such that
\begin{equation}
 \int_Gf\,\mathrm d\mu
 =\sum_{j=1}^{J_N}w_jf(g_j)
 \quad\text{for every }f\in X_{\Lambda N}.
\label{eq:cubature_exactness}
\end{equation}
The weights may be normalized so that $\sum_jw_j=1$.
\end{lemma}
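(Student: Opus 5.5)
The plan is to obtain the cubature from a Carathéodory–Tchakaloff argument applied to the finite-dimensional space $W:=X_{\Lambda N}$, and then to bound the number of nodes by $\dim W$. Integration against $\mu$ is a positive linear functional on $W$. Since $W$ is finite-dimensional and is spanned by matrix coefficients, we may pass to its real form $W_{\R}$, which consists of the real and imaginary parts of the elements of $W$. The space $W_{\R}$ is closed under complex conjugation, because $\overline{\pi_{ij}}$ is a matrix coefficient of the contragredient $\pi^{*}$ and $c_{\pi^{*}}=c_{\pi}$. Moreover, exactness on $W_{\R}$ implies exactness on $W$. Let $m:=\dim_{\R}W_{\R}$, and consider the moment map $v:G\to\R^{m}$, $v(g):=(b_{1}(g),\dots,b_{m}(g))$, for a real basis $(b_{i})$ of $W_{\R}$ with $b_{1}\equiv1$.

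The key step is to show that the moment vector $\int_{G}v\,\mathrm d\mu$ lies in the convex hull of the compact set $v(G)$. This holds because $\mu$ is a probability measure and $v$ is continuous, so the integral lies in the closed convex hull of $v(G)$. The convex hull of a compact set in $\R^{m}$ is compact by Carathéodory's theorem, so the closed convex hull coincides with the convex hull. Carathéodory's theorem in $\R^{m}$ then provides at most $m+1$ points $g_{j}$ and convex weights $w_{j}\ge0$ with $\sum_{j}w_{j}v(g_{j})=\int_{G}v\,\mathrm d\mu$. Discarding the zero weights gives strictly positive weights. Because $b_{1}\equiv1$, the weights automatically satisfy $\sum_{j}w_{j}=1$. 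By linearity this yields \eqref{eq:cubature_exactness} on $W_{\R}$, and hence on $X_{\Lambda N}$.

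It remains to bound the number of nodes, $J_{N}\le m+1\le 2\dim_{\C}X_{\Lambda N}+1$. By Peter–Weyl, $\dim X_{\Lambda N}=\sum_{\sqrt{c_{\pi}}\le\Lambda N}d_{\pi}^{2}$. This is the Casimir spectral counting function, which by the Weyl law already invoked in \eqref{eq:ZN_Weyl} is at most $C(\Lambda N)^{d}$. We can also bound it directly: $\sum_{\sqrt{c_{\pi}}\le R}d_{\pi}^{2}\le e\sum_{\pi}d_{\pi}^{2}e^{-c_{\pi}/R^{2}}=e\,p_{1/R^{2}}(e,e)\le C_{G}R^{d}$, using the diagonal heat-kernel bound from the proof of \cref{prop:finite_rank_heat}. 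Since $\Lambda$ is a fixed group constant, this gives $J_{N}\le C_{G}N^{d}$.

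The main subtlety, rather than a real obstacle, is the convexity step, where we need the moment vector in the convex hull itself and not merely in its closure. The compactness of $G$, together with the fact that the convex hull of a compact set in finite dimensions is closed, settles this. A constructive alternative would be to use Gauss-type product rules on a maximal torus combined with the Weyl integration formula, but the existential Tchakaloff route suffices for the statement.
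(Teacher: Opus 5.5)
Your proposal is correct and follows the paper's proof essentially verbatim: a Carath\'eodory--Tchakaloff argument on the real span of the real and imaginary parts of $X_{\Lambda N}$, with the Haar moment vector as the barycenter of the compact moment image, followed by the count $J_N\le 2\dim_{\C}X_{\Lambda N}+1\le C_GN^d$ from the Weyl counting law. Your extra details are correct refinements of steps the paper leaves implicit, namely the closedness of the convex hull of a compact set in $\R^m$ and the heat-kernel proof $\sum_{\sqrt{c_\pi}\le R}d_\pi^2\le e\,p_{1/R^2}(e,e)\le C_GR^d$ of the counting bound.
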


\begin{proof}
Let $E_N$ be the real vector space generated by the real and imaginary parts of functions in $X_{\Lambda N}$, and choose a basis $1,f_1,\ldots,f_D$ of $E_N$.  The vector
\[
 b:=\left(\int_G f_1\,\mathrm d\mu,\ldots,
          \int_G f_D\,\mathrm d\mu\right)
\]
belongs to the convex hull of the compact set
$\{(f_1(g),\ldots,f_D(g)):g\in G\}$.  Indeed, it is the barycenter of the push-forward of $\mu$ under this map.  Carath\'eodory's theorem therefore provides at most $D+1$ points $g_j$ and non-negative coefficients $w_j$ with $\sum_jw_j=1$ representing $b$.  After discarding zero weights, all weights are positive and the resulting formula is exact on $E_N$, hence on $X_{\Lambda N}$.  Finally, the Weyl counting law gives
$D+1\le 2\dim_{\mathbb C}X_{\Lambda N}+1\le C_GN^d$.
\end{proof}

\begin{remark}[An explicit positive cubature on $\SO{3}$]
\label{rem:SO3_positive_cubature}
For a target Wigner band $R$, take
$2R+1$ equispaced nodes in each of the Euler angles $\alpha$ and $\gamma$, and take $R+1$ Gauss-Legendre nodes in $x=\cos\beta$, with the corresponding positive product weights normalized for Haar probability measure.  This cubature is exact on $X_R$.  Indeed,
\[
 D^\ell_{mn}(\alpha,\beta,\gamma)
 =e^{-im\alpha}d^\ell_{mn}(\beta)e^{-in\gamma}.
\]
The two trapezoidal sums vanish unless $m=n=0$. In the remaining case
$d^\ell_{00}(\beta)=P_\ell(\cos\beta)$, which is integrated exactly by the Gauss-Legendre rule.  Thus the abstract positive cubature in \cref{lem:positive_cubature} has a direct tensor-product realization on $\SO{3}$, and the standard Wigner transform grid can be enlarged to the band $R=\Lambda N$ required by \eqref{eq:cubature_exactness}.
\end{remark}

For a vector $\mathbf a=(a_j)_{j=1}^{J_N}$, define the band-limited extension
\begin{equation}
 (\mathcal Q_{\delta,N,J}^{\eps}\mathbf a)(g)
 :=\sum_{j=1}^{J_N}
 w_jq_{\delta,N}^{\eps}(g^{-1}g_j)a_j.
\label{eq:discrete_Q}
\end{equation}

\begin{proposition}[Discrete Markov and moment properties]
\label{prop:discrete_markov_filter}
If $\Lambda$ in \eqref{eq:cubature_exactness} is chosen sufficiently large, then
\begin{enumerate}[label=\textnormal{(\roman*)},leftmargin=2em]
\item $\mathcal Q_{\delta,N,J}^{\eps}$ maps $\mathbb R^{J_N}$ into $X_{\kappa N}$, is positivity preserving, and
\begin{equation}
 \sum_{j=1}^{J_N}w_jq_{\delta,N}^{\eps}(g^{-1}g_j)=1,
 \quad g\in G.
\label{eq:discrete_row_sum}
\end{equation}
Consequently
\[
 \|\mathcal Q_{\delta,N,J}^{\eps}\mathbf a
 -\mathcal Q_{\delta,N,J}^{\eps}\mathbf b\|_{L^\infty(G)}
 \le\|\mathbf a-\mathbf b\|_{\ell^\infty}.
\]
\item The transition probabilities
\[
 P_j(g):=w_jq_{\delta,N}^{\eps}(g^{-1}g_j)
\]
satisfy, uniformly in $g$,
\begin{align}
 \sum_jP_j(g)\|\rho(g_j)-\rho(g)\|_{\mathrm F}^2
 &\le C_G(\delta\eps+N^{-2}),
\label{eq:discrete_variance}\\
 \left\|\sum_jP_j(g)\bigl(\rho(g_j)-\rho(g)\bigr)\right\|_{\mathrm F}
 &\le C_G(\delta\eps+N^{-2}).
\label{eq:discrete_drift}
\end{align}
\end{enumerate}
\end{proposition}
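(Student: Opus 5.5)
The plan is to reduce every discrete identity to the exactness of the cubature \eqref{eq:cubature_exactness} applied to functions of the form $h\mapsto q_{\delta,N}^{\eps}(g^{-1}h)\,\psi(h)$, where $\psi$ is a low-degree matrix coefficient. The proof then inherits the continuous properties of $q_{\delta,N}^{\eps}$ established in \cref{thm:finite_rank_markov_filter} and \eqref{eq:Q_second_moment}.

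One subtlety comes first. The kernel $q_{\delta,N}^{\eps}=F_N*p_{\delta\eps}$ has the same spectral support as $F_N$, because convolution with the central heat kernel only rescales each block by $e^{-\delta\eps c_\pi}$. Hence $q_{\delta,N}^{\eps}\in X_{\kappa N}$. Left translation preserves each isotypic component, so for each fixed $g$ the function $h\mapsto q_{\delta,N}^{\eps}(g^{-1}h)$ also lies in $X_{\kappa N}$. Its nonnegativity follows from $F_N\ge0$ and $p_{\delta\eps}\ge0$.

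For (i), the first step is the range statement. As a function of $g$, $q_{\delta,N}^{\eps}(g^{-1}g_j)=q_{\delta,N}^{\eps}(g_j^{-1}g)$ by inversion invariance, and this is a translate of an element of $X_{\kappa N}$. So $\mathcal Q_{\delta,N,J}^{\eps}\mathbf a\in X_{\kappa N}$. Positivity is immediate from $w_j>0$ and $q\ge0$. The row sum \eqref{eq:discrete_row_sum} is cubature exactness applied to $h\mapsto q(g^{-1}h)\in X_{\kappa N}\subset X_{\Lambda N}$, which gives $\sum_j w_j q(g^{-1}g_j)=\int_G q\,\mathrm d\mu=1$. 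The $\ell^\infty\to L^\infty$ contraction then follows because the $P_j(g)$ form a probability vector.

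For (ii), I would expand in a way that keeps every function in the exact class. Write
\[
\|\rho(g_j)-\rho(g)\|_{\mathrm F}^2=2r-2\,\mathrm{Re}\Tr\bigl(\rho(g)^*\rho(g_j)\bigr),
\]
using unitarity of $\rho$. The entries of $\rho(h)$ are matrix coefficients of the finitely many irreducible constituents of $\rho$, so they lie in some fixed $X_{R_\rho}$. By \cref{lem:spectral_product}, the products $q(g^{-1}h)\rho_{ab}(h)$ lie in $X_{\kappa_G(\kappa N+R_\rho+1)}$, and this is contained in $X_{\Lambda N}$ once $\Lambda$ is chosen large enough depending on $\kappa$, $\kappa_G$ and $R_\rho$. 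This is exactly the choice of $\Lambda$ the statement allows. Cubature exactness then turns both discrete sums into the continuous integrals
\[
\int_G\|\rho(h)-\rho(g)\|_{\mathrm F}^2\,q(g^{-1}h)\,\mathrm d\mu(h)
\quad\text{and}\quad
\int_G\bigl(\rho(h)-\rho(g)\bigr)\,q(g^{-1}h)\,\mathrm d\mu(h).
\]
The drift sum is linear in $\rho(g_j)$, so exactness applies to it directly. Substituting $h=gz$ gives $\rho(h)-\rho(g)=\rho(g)(\rho(z)-I_r)$, and $\rho(g)$ is unitary. The variance integral therefore becomes $\int\|\rho(z)-I_r\|_{\mathrm F}^2q(z)\,\mathrm d\mu(z)$, which is bounded via \eqref{eq:faithful_embedding_metric} and \eqref{eq:Q_second_moment} by $C_G(\delta\eps+N^{-2})$. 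The drift becomes $\rho(g)(A_\rho-I_r)$, bounded by the symmetrization identity from the proof of \cref{thm:finite_rank_lipschitz_rate}.

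The main obstacle I expect is the bookkeeping of spectral supports in step (ii). I need the product of the translated kernel with coefficients of $\rho$ to stay inside $X_{\Lambda N}$ uniformly in $N$. This holds because $R_\rho$ is fixed, so $\kappa_G(\kappa N+R_\rho+1)\le\Lambda N$ for all $N\ge1$ with $\Lambda:=\kappa_G(\kappa+R_\rho+1)$. I also need the translation-invariance of $X_R$ and the stability of the spectral support under the central heat convolution. Once these are checked, the remainder is the continuous estimate already established.
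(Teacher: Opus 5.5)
Your proposal is correct and follows the paper's proof essentially step for step. You place $h\mapsto q_{\delta,N}^{\eps}(g^{-1}h)$ in $X_{\kappa N}$, use the product property to put its products with the fixed-band entries of $\rho$ into $X_{\Lambda N}$, invoke cubature exactness to turn the discrete sums into Haar integrals, and conclude with \eqref{eq:Q_second_moment} and the symmetrization identity. Two differences are minor improvements: writing $\|\rho(g_j)-\rho(g)\|_{\mathrm F}^2=2r-2\,\mathrm{Re}\Tr(\rho(g)^*\rho(g_j))$ by unitarity means you need only single entries of $\rho$ and their conjugates, rather than products of two entries, and your explicit $\Lambda=\kappa_G(\kappa+R_\rho+1)$ makes the paper's ``$\Lambda$ sufficiently large'' concrete.
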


\begin{proof}
For fixed $g$, the function $h\mapsto q_{\delta,N}^{\eps}(g^{-1}h)$ belongs to $X_{\kappa N}$, because the heat multiplier does not enlarge the spectral support of $F_N$.  Hence \eqref{eq:discrete_row_sum} follows from the cubature exactness.  Positivity and the contraction estimate are immediate.

The matrix entries of the faithful representation $\rho$, and the products of two such entries, belong to fixed Peter-Weyl spectral spaces independent of $N$.  By \cref{lem:spectral_product}, the products of these functions with
$h\mapsto q_{\delta,N}^{\eps}(g^{-1}h)$ lie in $X_{\Lambda N}$ after choosing $\Lambda$ sufficiently large.  Therefore the cubature identities for the left-hand sides of \eqref{eq:discrete_variance}--\eqref{eq:discrete_drift} agree exactly with their Haar-integral counterparts.  The latter satisfy the asserted bounds by \eqref{eq:Q_second_moment} and the symmetrization argument in the proof of \cref{thm:finite_rank_lipschitz_rate}.
\end{proof}

To include the control and flow discretizations, assume in addition that there are constants $L_{\xi,U},L_{\ell,U}$ such that
\begin{equation}
 \|\xi(g,u)-\xi(g,v)\|_{\Lie}
 \le L_{\xi,U}|u-v|,
 \quad
 |\ell(g,u)-\ell(g,v)|
 \le L_{\ell,U}|u-v|.
\label{eq:control_Lipschitz}
\end{equation}
Let $U_h\subset U$ be a finite control set with fill distance
\begin{equation}
 h_U:=\sup_{u\in U}\inf_{v\in U_h}|u-v|.
\label{eq:control_fill}
\end{equation}
Let $\widehat\Phi_\delta^u:G\to G$ be a Lie-group integrator of order $p\ge1$ (see \cite{IserlesMuntheKaasNorsettZanna2000,HallLeok2017}) satisfying, uniformly in $g,h$ and $u$,
\begin{align}
 \dist_G(\widehat\Phi_\delta^u(g),\Phi_\delta^u(g))
 &\le C_\Phi\delta^{p+1},
\label{eq:flow_local_error}\\
 \|\rho(\widehat\Phi_\delta^u(g))-\rho(\widehat\Phi_\delta^u(h))\|_{\mathrm F}
 &\le(1+C_\Phi\delta)\,\|\rho(g)-\rho(h)\|_{\mathrm F}.
\label{eq:flow_discrete_lip}
\end{align}
The Lipschitz stability \eqref{eq:flow_discrete_lip} is stated in the embedded metric \eqref{eq:faithful_embedding_metric} rather than in $\dist_G$, because the coupling argument of \cref{lem:controlled_small_noise} is run in that metric, and a bound in $\dist_G$ would transfer to it only at the cost of the fixed factor $C_\rho/c_\rho\ge1$ at every step, which is not affordable over $K$ steps. The condition is nevertheless harmless. The exact flow satisfies it, by \eqref{eq:flow_embedding_lip}, and so does every integrator assembled from finitely many exponential updates $g\mapsto g\exp\bigl(\delta\,\eta(g,u)\bigr)$ with $\eta$ Lipschitz in $g$ uniformly in $u$, a class containing the Lie-Euler method and the commutator-free Runge-Kutta schemes \cite{IserlesMuntheKaasNorsettZanna2000}. Indeed $\rho\bigl(g\exp(\delta\eta(g,u))\bigr)=\rho(g)\,e^{\delta\,\mathrm d\rho(\eta(g,u))}$, the left factor is unitary, and $\|e^{\delta A}-e^{\delta B}\|\le\delta\|A-B\|\,e^{\delta\max(\|A\|,\|B\|)}$, so the difference of two such updates issued from $g$ and $h$ is bounded by $(1+C\delta)\|\rho(g)-\rho(h)\|_{\mathrm F}$, exactly as in the derivation of \eqref{eq:flow_embedding_lip}.

Define $\widehat U_K:=\phi$ and
$\widehat Z_K:=\mathcal Q_{\delta,N,J}^{\eps}(\phi(g_j))_{j=1}^{J_N}$.  For $k=K-1,\ldots,0$, set
\begin{align}
 \widehat U_k(g)
 &:=\min_{u\in U_h}\left\{
 \delta\ell(g,u)+
 \widehat Z_{k+1}(\widehat\Phi_\delta^u(g))
 \right\},
\label{eq:fully_discrete_U}\\
 \widehat Z_k
 &:=\mathcal Q_{\delta,N,J}^{\eps}
 \bigl(\widehat U_k(g_j)\bigr)_{j=1}^{J_N}.
\label{eq:fully_discrete_Z}
\end{align}
Every $\widehat Z_k$ belongs to $X_{\kappa N}$ and can therefore be stored either by its nodal values or by its Peter-Weyl coefficients.

\begin{lemma}[Control and flow perturbations]
\label{lem:control_flow_perturbation}
Let $\widehat V_{\delta,k}^{h,p}$ be the deterministic discrete value obtained from \eqref{eq:Bellman} by replacing $U$ with $U_h$ and $\Phi_\delta^u$ with $\widehat\Phi_\delta^u$.  Under \eqref{eq:control_Lipschitz} and
\eqref{eq:flow_local_error}--\eqref{eq:flow_discrete_lip},
\begin{equation}
 \max_{0\le k\le K}
 \|\widehat V_{\delta,k}^{h,p}-V_\delta(\cdot,t_k)\|_{L^\infty(G)}
 \le C_T(h_U+\delta^p).
\label{eq:control_flow_perturbation}
\end{equation}
\end{lemma}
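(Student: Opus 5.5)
The plan is to compare the two deterministic discrete dynamic programs through an intermediate one and to use only the non-expansiveness and Lipschitz preservation of Bellman operators. Let $\mathcal T_\delta^{h}$ be the Bellman operator \eqref{eq:Bellman} with $U$ replaced by $U_h$ but with the exact flow $\Phi^u_\delta$. Let $\widehat{\mathcal T}_\delta$ be the fully perturbed operator, which uses $U_h$ and $\widehat\Phi^u_\delta$. All three operators $\mathcal T_\delta,\mathcal T_\delta^h,\widehat{\mathcal T}_\delta$ are monotone, additively homogeneous and sup-norm non-expansive, exactly as in \cref{prop:Tdelta_properties}, since each is a pointwise infimum of translation-plus-composition maps. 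The standard telescoping then applies. Write $e_k:=\|\widehat V^{h,p}_{\delta,k}-V_\delta(\cdot,t_k)\|_\infty$, with $e_K=0$. Then
\[
e_k\le e_{k+1}+\bigl\|\widehat{\mathcal T}_\delta V_\delta(\cdot,t_{k+1})-\mathcal T_\delta V_\delta(\cdot,t_{k+1})\bigr\|_\infty .
\]
It therefore suffices to bound the one-step consistency defect on the exact discrete value $W:=V_\delta(\cdot,t_{k+1})$ by $C\delta(h_U+\delta^{p})$. Summing over $K=T/\delta$ steps then gives $C_T(h_U+\delta^p)$.

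The defect splits into a flow part and a control part. For the flow part, fix $u\in U_h$. Then $|W(\widehat\Phi^u_\delta(g))-W(\Phi^u_\delta(g))|\le\Lip(W)\,C_\Phi\delta^{p+1}$ by \eqref{eq:flow_local_error}. Here $\Lip(W)\le L:=e^{L_\xi T}(L_\phi+L_\ell T)$ uniformly in $k,\delta$, by iterating the Lipschitz bound of \cref{prop:Tdelta_properties}. Taking infima over $U_h$ gives $\|\widehat{\mathcal T}_\delta W-\mathcal T^h_\delta W\|_\infty\le LC_\Phi\delta^{p+1}$.

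For the control part, note first that $\mathcal T_\delta W\le\mathcal T^h_\delta W$ trivially since $U_h\subset U$. For the reverse inequality, take a near-minimizing $u\in U$ and $v\in U_h$ with $|u-v|\le h_U$. The running costs differ by at most $\delta L_{\ell,U}h_U$ by \eqref{eq:control_Lipschitz}. For the endpoints, a Gronwall comparison of the two flows from the same $g$ is needed. Embed through $\rho$ as in the proof of \cref{lem:controlled_small_noise}, or argue directly in $\dist_G$ for short times. This gives $\dist_G(\Phi^u_\delta(g),\Phi^v_\delta(g))\le C\delta\,L_{\xi,U}h_U$, using \eqref{eq:control_Lipschitz} for the velocity gap and $L_\xi$ for the state dependence. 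Hence $W$ evaluated at the two endpoints differs by at most $CL\delta h_U$. Altogether $\|\mathcal T^h_\delta W-\mathcal T_\delta W\|_\infty\le C\delta h_U$.

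Combining the two parts gives a one-step defect of at most $C\delta(h_U+\delta^p)$ and closes the recursion. The only step needing care is the flow comparison for two different constant controls on the curved group. I would handle it in the faithful embedding, differentiating $\rho(\Phi^u_s(g))-\rho(\Phi^v_s(g))$ via the ODE displayed before \eqref{eq:flow_embedding_lip}, bounding the derivative by $C\|\rho(\Phi^u_s)-\rho(\Phi^v_s)\|_{\mathrm F}+CL_{\xi,U}h_U$, applying Gronwall over $[0,\delta]$, and converting back with \eqref{eq:faithful_embedding_metric}. The constant factor $C_\rho/c_\rho$ in this conversion is harmless, because it enters only once per step inside an $O(\delta)$ defect, not multiplicatively. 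Note that the argument does not need \eqref{eq:flow_discrete_lip}; that hypothesis matters only for the later coupling argument.
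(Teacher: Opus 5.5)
Your proof is correct, but it is organized differently from the paper's. The paper argues at the level of trajectories. For a fixed control sequence in $U_h$ it runs the Gronwall recursion $e_{n+1}\le(1+C\delta)e_n+C\delta^{p+1}$ over the whole horizon for the embedded distance between exact and numerical trajectories. It converts this into an $O(\delta^p)$ cost difference, uniformly over control sequences, via the Lipschitz bounds on $\ell$ and $\phi$. Then, for the exact flow, it compares a near-optimal sequence in $U$ with a nearby sequence in $U_h$ by a second horizon-long Gronwall argument, and the reverse inequality comes from $U_h\subset U$.

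You instead telescope at the level of Bellman operators. You put the non-expansive perturbed operator $\widehat{\mathcal T}_\delta$ on the propagated error and evaluate the one-step defect on the exact discrete value $V_\delta(\cdot,t_{k+1})$. The only regularity you need is its uniform Lipschitz bound $e^{L_\xi T}(L_\phi+L_\ell T)$ from \cref{prop:Tdelta_properties}. The only dynamical estimate is a flow comparison on a single interval $[0,\delta]$, so the conversion factor $C_\rho/c_\rho$ is indeed never iterated. This buys economy: there are no control sequences or near-optimal policies over the horizon, and it is the same stability-plus-consistency template the paper uses in \cref{cor:smooth_markov_refinement}.

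Your remark that \eqref{eq:flow_discrete_lip} is not needed is also right. The paper's own first step could avoid it too, by splitting the one-step error through $\Phi_\delta^u$ evaluated at the numerical trajectory and using \eqref{eq:flow_embedding_lip}. The paper's trajectory formulation, in turn, is the same pathwise comparison as in the coupling proof of \cref{lem:controlled_small_noise}, where \eqref{eq:flow_discrete_lip} is used. All perturbations in \cref{thm:fully_discrete_convergence} are thus handled in one framework.
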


\begin{proof}
First keep the control set $U_h$ fixed and compare the exact and numerical flows under the same piecewise-constant control sequence, working in the embedded metric of \eqref{eq:faithful_embedding_metric}, in which both flows are $(1+C\delta)$-Lipschitz by \eqref{eq:flow_embedding_lip} and \eqref{eq:flow_discrete_lip}, and in which the local error \eqref{eq:flow_local_error} reads $\|\rho(\widehat\Phi_\delta^u(g))-\rho(\Phi_\delta^u(g))\|_{\mathrm F}\le C_\rho C_\Phi\delta^{p+1}$.  This gives
\[
 e_{n+1}\le(1+C\delta)e_n+C\delta^{p+1},
 \quad e_0=0,
\]
for the embedded distance between the two trajectories.  Thus $\max_ne_n\le C_T\delta^p$, and by \eqref{eq:faithful_embedding_metric} the same bound holds for $\dist_G$.  The Lipschitz bounds on the costs imply an $O(\delta^p)$ difference between the corresponding discrete costs, uniformly over control sequences, and hence between the two value functions.

Next use the exact flow and compare controls in $U$ and $U_h$.  Given a control value $u\in U$, choose $v\in U_h$ with $|u-v|\le h_U+o(1)$.  Standard continuous dependence of the flow on a constant control, following from \eqref{eq:control_Lipschitz}, gives a one-step perturbation $C\delta h_U$.  Discrete Gronwall therefore yields an $O(h_U)$ trajectory perturbation over the full horizon, and the running and terminal costs change by $O(h_U)$.  Applying this construction to a near-optimal control sequence gives one value inequality, and the reverse inequality follows from $U_h\subset U$.  Combining the two comparisons proves \eqref{eq:control_flow_perturbation}.
\end{proof}

\begin{theorem}[Fully discrete convergence]
\label{thm:fully_discrete_convergence}
Under Assumption~\ref{ass:standing}, assume \eqref{eq:control_Lipschitz} and
\eqref{eq:flow_local_error}--\eqref{eq:flow_discrete_lip}.  Let
$\widehat U_k,\widehat Z_k$ be defined by
\eqref{eq:fully_discrete_U}--\eqref{eq:fully_discrete_Z} using a positive cubature satisfying \eqref{eq:cubature_exactness}.  There exists $C_T<\infty$, independent of $\delta$, $N$, $\eps$, $h_U$, and the cubature nodes, such that, whenever $\delta\le1$ and $\nu_{\delta,N}^{\eps}\le1$,
\begin{align}
 \max_{0\le k\le K}
 \|\widehat U_k-V(\cdot,t_k)\|_{L^\infty(G)}
 &\le C_T\left(
 \sqrt\delta+\sqrt{\nu_{\delta,N}^{\eps}}
 +\nu_{\delta,N}^{\eps}
 +h_U+\delta^p
 \right),
\label{eq:fully_discrete_rate_U}\\
 \max_{0\le k\le K}
 \|\widehat Z_k-V(\cdot,t_k)\|_{L^\infty(G)}
 &\le C_T\left(
 \sqrt\delta+\sqrt{\nu_{\delta,N}^{\eps}}
 +\nu_{\delta,N}^{\eps}
 +h_U+\delta^p
 \right).
\label{eq:fully_discrete_rate_Z}
\end{align}
Consequently, the fully discrete scheme converges uniformly provided
\begin{equation}
 \delta\to0,
 \quad
 \eps\to0,
 \quad
 \delta N^2\to\infty,
 \quad
 h_U\to0.
\label{eq:fully_discrete_conditions}
\end{equation}
The spatial cubature introduces no Lebesgue-constant amplification, since it enters through an exactly stochastic transition matrix.
\end{theorem}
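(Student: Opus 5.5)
The plan is to recognize \eqref{eq:fully_discrete_U} as the dynamic programming equation of a controlled Markov chain with a finite state-transition step. Then \cref{lem:controlled_small_noise}, applied with the control set $U_h$ and the integrator $\widehat\Phi_\delta^u$, controls the distance to the deterministic problem $\widehat V_{\delta,k}^{h,p}$. Finally we chain with \cref{lem:control_flow_perturbation} and \cref{thm:DPP_convergence}.

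\emph{Step 1: the Markov chain.} Define the transition kernel
\[
P(y,\mathrm dz):=\sum_{j=1}^{J_N}P_j(y)\,\delta_{g_j}(\mathrm dz),\qquad P_j(y)=w_jq_{\delta,N}^{\eps}(y^{-1}g_j).
\]
By \cref{prop:discrete_markov_filter}(i) its weights are non-negative and sum to one. Since $q_{\delta,N}^{\eps}$ is a trigonometric polynomial, $y\mapsto Pf(y)=\sum_jP_j(y)f(g_j)$ is continuous, so $P$ is Feller. By construction
\[
\widehat Z_{k+1}(y)=(P\widehat U_{k+1})(y),
\]
because only the nodal values $\widehat U_{k+1}(g_j)$ enter. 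Thus $\widehat U_k$ satisfies \eqref{eq:noisy_DPP} with $U$ replaced by $U_h$ and $\Phi^u_\delta$ by $\widehat\Phi^u_\delta$. The variance and drift hypotheses \eqref{eq:transition_variance}--\eqref{eq:transition_drift} are exactly \eqref{eq:discrete_variance}--\eqref{eq:discrete_drift}, with
\[
a=\delta\eps+N^{-2}=\delta\nu_{\delta,N}^{\eps}\le1 .
\]

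\emph{Step 2: rerunning the coupling lemma.} The proof of \cref{lem:controlled_small_noise} uses only four ingredients:
\begin{itemize}
\item the one-step embedded Lipschitz bound \eqref{eq:flow_embedding_lip}, which for $\widehat\Phi$ is supplied verbatim by \eqref{eq:flow_discrete_lip};
\item the Lipschitz continuity in $g$ of $\ell(\cdot,u)$ and $\phi$;
\item compactness of the control set, where finiteness of $U_h$ even makes the infimum a minimum;
\item the deterministic DPP for the reference value.
\end{itemize}
The deterministic reference is now $\widehat V_{\delta,k}^{h,p}$, which satisfies its own pointwise recursion. Hence the lower bound (noise-free cost $\ge\widehat V^{h,p}$ for each realized control sequence) and the upper bound (an open-loop $\eta$-optimal sequence in $U_h^{K-k}$) go through unchanged. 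This gives
\[
\max_k\|\widehat U_k-\widehat V_{\delta,k}^{h,p}\|_\infty\le C_T\bigl(\sqrt{\nu}+\nu\bigr).
\]
The constant is independent of the nodes, because only $C_P=C_G$ from \cref{prop:discrete_markov_filter} enters.

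\emph{Step 3: assembling the bound for $\widehat U_k$.} The triangle inequality with \cref{lem:control_flow_perturbation}, which contributes $C_T(h_U+\delta^p)$, and \cref{thm:DPP_convergence}, which contributes $C\sqrt\delta$, yields \eqref{eq:fully_discrete_rate_U}.

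\emph{Step 4: the stored continuation $\widehat Z_k$.} Write
\[
\widehat Z_k(g)-\widehat V^{h,p}_{\delta,k}(g)=\sum_jP_j(g)\bigl(\widehat U_k(g_j)-\widehat V^{h,p}_{\delta,k}(g_j)\bigr)+\sum_jP_j(g)\bigl(\widehat V^{h,p}_{\delta,k}(g_j)-\widehat V^{h,p}_{\delta,k}(g)\bigr).
\]
The first sum is at most the Step 2 bound, by the row-sum identity. For the second sum we need a uniform Lipschitz constant of $\widehat V^{h,p}_{\delta,k}$ in $\dist_G$. This follows from \eqref{eq:flow_discrete_lip} iterated in the embedded metric, giving $(1+C\delta)^K\le e^{CT}$, and then converted once by \eqref{eq:faithful_embedding_metric}. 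With that constant, Jensen's inequality and \eqref{eq:discrete_variance} bound the second sum by $C\sqrt{a}\le C\sqrt\nu$. The same chain of inequalities as in Step 3 then gives \eqref{eq:fully_discrete_rate_Z}. The convergence conditions \eqref{eq:fully_discrete_conditions} are immediate, since $\delta^p\le\delta\to0$.

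The main obstacle is Step 2. One must check carefully that nothing in the coupling argument used the exact flow or the continuous-kernel structure beyond \eqref{eq:flow_discrete_lip} and the two moment bounds. The Gronwall recursion \eqref{eq:coupling_recursion} must be run in the embedded metric so that no per-step factor $C_\rho/c_\rho$ appears. The atomic, state-dependent kernel $P$ must also be handled; it poses no difficulty because the lemma only requires conditional moments given $\mathcal F_n$.
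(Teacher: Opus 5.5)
Your proposal is correct and follows the paper's proof essentially step for step. You identify the cubature step as a Feller Markov kernel satisfying the moment hypotheses via \cref{prop:discrete_markov_filter}, rerun the coupling of \cref{lem:controlled_small_noise} with $U_h$ and $\widehat\Phi_\delta^u$ using \eqref{eq:flow_discrete_lip}, and conclude with \cref{lem:control_flow_perturbation} and \cref{thm:DPP_convergence}. The only difference is in the $\widehat Z_k$ estimate: the paper applies the discrete Markov operator to $V_\delta(\cdot,t_k)$, whose uniform Lipschitz bound is already supplied by \cref{prop:Tdelta_properties}, rather than to $\widehat V^{h,p}_{\delta,k}$. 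That choice spares the paper your (correct) derivation of a Lipschitz constant through the embedded metric.
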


\begin{proof}
By \cref{prop:discrete_markov_filter}, the cubature step is a Markov transition satisfying the drift and variance hypotheses of \cref{lem:controlled_small_noise} with
$a=\delta\eps+N^{-2}\le1$.  Apply the proof of that lemma with the admissible controls restricted to $U_h$ and with the deterministic maps $\widehat\Phi_\delta^u$, whose uniform Lipschitz property in the embedded metric, \eqref{eq:flow_discrete_lip}, replaces \eqref{eq:flow_embedding_lip} in the coupling recursion.  This compares $\widehat U_k$ with the deterministic value $\widehat V_{\delta,k}^{h,p}$ and gives
\[
 \max_k\|\widehat U_k-\widehat V_{\delta,k}^{h,p}\|_\infty
 \le C_T\left(\sqrt{\nu_{\delta,N}^{\eps}}+\nu_{\delta,N}^{\eps}\right).
\]
The deterministic perturbation estimate \eqref{eq:control_flow_perturbation} contributes $C_T(h_U+\delta^p)$, and \cref{thm:DPP_convergence} contributes $C_T\sqrt\delta$.  The triangle inequality proves \eqref{eq:fully_discrete_rate_U}.  Finally, apply the discrete Markov operator once to both $\widehat U_k$ and the uniformly Lipschitz deterministic value $V_\delta(\cdot,t_k)$.  The contraction property and \eqref{eq:discrete_variance} give an additional $O(\sqrt{\delta\nu_{\delta,N}^{\eps}})$ term, which is absorbed by the right-hand side, proving \eqref{eq:fully_discrete_rate_Z}.
\end{proof}

\begin{algorithm}[t]
\caption{Finite-rank Markov spectral Bellman iteration}
\label{alg:viscous_spectral}
\begin{algorithmic}[1]
\Require Horizon $T$, $K$ steps with $\delta=T/K$; bandwidth $N$; optional viscosity $\eps\ge0$; positive cubature $\{(g_j,w_j)\}_{j=1}^{J_N}$ exact on $X_{\Lambda N}$; control mesh $U_h$; Lie-group flow map $\widehat\Phi_\delta^u$.
\Ensure Band-limited continuations $\widehat Z_k\in X_{\kappa N}$ and prefiltered values $\widehat U_k$.
\State Construct the central Markov multiplier $m_N(\pi)$ from \eqref{eq:positive_filter_multiplier} and combine it with the heat multiplier $e^{-\delta\eps c_\pi}$.
\State Set $\widehat U_K(g_j)\leftarrow\phi(g_j)$ and apply the combined Markov filter to obtain $\widehat Z_K$.
\For{$k=K-1,K-2,\ldots,0$}
\State For every cubature node $g_j$, compute
\[
 \widehat U_k(g_j)\leftarrow
 \min_{u\in U_h}\left\{
 \delta\ell(g_j,u)+
 \widehat Z_{k+1}(\widehat\Phi_\delta^u(g_j))
 \right\},
\]
where $\widehat Z_{k+1}$ is evaluated off grid from its finite Peter-Weyl expansion.
\State Apply the finite-rank Markov multiplier to the nodal values
$\{\widehat U_k(g_j)\}$ to obtain $\widehat Z_k\in X_{\kappa N}$.
\EndFor
\State \Return $\{\widehat U_k,\widehat Z_k\}_{k=0}^K$.
\end{algorithmic}
\end{algorithm}

\begin{remark}[Implementation on $\SO{3}$]
For $\SO{3}$, the product formula
$\chi_{\ell_1}\chi_{\ell_2}=\sum_{\ell=|\ell_1-\ell_2|}^{\ell_1+\ell_2}\chi_\ell$
turns \eqref{eq:positive_filter_multiplier} into a finite triangular sum.  The transport of each control candidate still acts exactly on Wigner coefficients.  The nonlinear minimization is performed at the positive cubature nodes, and the Markov filter is then a diagonal block multiplier.  Thus the new construction changes neither the representation-theoretic character of the method nor its compatibility with fast Wigner transforms, while it removes the loss of monotonicity caused by an orthogonal forward transform of the Bellman output.
\end{remark}

\begin{remark}[Relation to the side-truncated heat filter]
The side-truncated heat operator $\Pi_Me^{\Lap_G/N^2}$ remains a useful alternative, and the estimate \eqref{eq:finite_heat_tail} quantifies its loss of contractivity.  The present filter is stronger for the convergence theory. It is finite-rank and exactly Markov for every $N$, so no oversampling parameter $M$ and no condition $K\eta_{N,M}\to0$ are required.  Its multipliers, on the other hand, are computed from tensor-product multiplicities rather than by the single closed formula $e^{-c_\pi/N^2}$.
\end{remark}

\section{Numerical studies}
\label{sec:numerics}

We test the fully discrete Fej\'er-Markov scheme of \cref{ssec:fully_discrete_markov} and the heat-filter variant of \cref{ssec:filter} on the rotation group $\SO{3}$. Example~1 validates the schemes against closed-form solutions. Example~2 is a nonsmooth multi-target problem without a known closed form, compared against an upper-bound reference and against grid-based semi-Lagrangian baselines. All experiments were run on a single Apple M3 Pro core.

\subsection{Setup on $\SO{3}$ and Wigner $D$-matrices}
\label{ssec:SO3_setup}

The unitary dual of $\SO{3}$ is the discrete set $\widehat{\SO{3}}=\{\,\pi_{\ell}\,:\,\ell\in\N\,\}$, with $d_{\ell}=2\ell+1$. The Casimir eigenvalue of $\pi_\ell$ is
\begin{equation}
	c_{\ell}\;=\;\ell(\ell+1),\quad\Lap_{\SO{3}}\,\pi_{\ell,mn}\;=\;-\ell(\ell+1)\,\pi_{\ell,mn},
	\label{eq:SO3_casimir}
\end{equation}
where $-\ell\le m,n\le\ell$. In ZYZ Euler angles $(\alpha,\beta,\gamma)\in[0,2\pi)\times[0,\pi]\times[0,2\pi)$ the matrix coefficients are the \emph{Wigner $D$-matrices}
\begin{equation}
	\pi_{\ell,mn}(\alpha,\beta,\gamma)\;=\;D^{\ell}_{mn}(\alpha,\beta,\gamma)\;=\;e^{-\mathrm{i} m\alpha}\,d^{\ell}_{mn}(\beta)\,e^{-\mathrm{i} n\gamma},
	\label{eq:Wigner_D}
\end{equation}
with $d^{\ell}_{mn}(\beta)$ the real Wigner (small) $d$-matrix \cite{BroeckerTomDieck1985}, and the normalized Haar measure is $\mathrm{d}\mu=(8\pi^{2})^{-1}\sin\beta\,\mathrm{d}\alpha\,\mathrm{d}\beta\,\mathrm{d}\gamma$. The truncation $\sqrt{c_{\pi}}\le N$ becomes $\ell\le L_{N}=\lfloor\frac{1}{2}(-1+\sqrt{1+4N^{2}})\rfloor$, and
\begin{equation}
	\dim X_{N}=\sum_{\ell=0}^{L_{N}}(2\ell+1)^{2}=\frac{(L_{N}+1)(2L_{N}+1)(2L_{N}+3)}{3}=O(L_{N}^{3}).
	\label{eq:dimXN_SO3}
\end{equation}
From here on we parameterize by the spin band $L$, so that coefficients are stored for $\ell\le L$.

\subsection{Implementation details}
\label{ssec:SO3_implementation}

\subsubsection{Transforms and positive cubature} The forward Peter-Weyl transform separates in the Euler angles,
\begin{equation}
	\widehat f^{\ell}_{mn}=\sqrt{2\ell+1}\int_{0}^{2\pi}\!\!\int_{0}^{\pi}\!\!\int_{0}^{2\pi}f(\alpha,\beta,\gamma)\,e^{\mathrm{i} m\alpha}\,d^{\ell}_{mn}(\beta)\,e^{\mathrm{i} n\gamma}\,\mathrm{d}\mu,
	\label{eq:SO3_forward}
\end{equation}
and is computed by FFTs in $\alpha,\gamma$ and a Gauss-Legendre rule in $\cos\beta$, with $2(L+1)$ equispaced nodes in each of $\alpha$ and $\gamma$ and $L+1$ Gauss-Legendre nodes in $\cos\beta$, hence $J=4(L+1)^{3}$ grid points. Fast $\SO{3}$ transforms reduce this cost (see, e.g., \cite{ShenXuZhang2018}), and at the bands used here the direct separated transform suffices. The weights of this tensor-product rule are positive, and the rule is the explicit positive cubature of \cref{rem:SO3_positive_cubature}. With $2(L+1)$ nodes in each angle and $L+1$ Gauss-Legendre nodes, it integrates $X_{R}$ exactly for every $R\le 2L+1$, which covers the kernels and the products required below.
The following proposition records that the implemented pipeline realizes the discrete Markov operator of \cref{ssec:fully_discrete_markov} without error.

\begin{proposition}
	\label{prop:exact_realization}
	Let $q\in X_{L}$ be a central, inversion-invariant kernel with spin multipliers $\widehat q(\ell)$, let $\{(g_{j},w_{j})\}_{j=1}^{J}$ be the cubature above, and let $\mathbf a=(a_{j})_{j=1}^{J}$ be any vector of nodal values. Then the composite operation consisting of the discrete forward transform, the multiplication of the spin-$\ell$ block by $\widehat q(\ell)$, and the evaluation of the resulting expansion at $g\in\SO{3}$ returns
	\[
	\sum_{j=1}^{J}w_{j}\,q(g^{-1}g_{j})\,a_{j},
	\]
	that is, the discrete Markov operator \eqref{eq:discrete_Q} with kernel $q$. In particular, for $\eps=0$ the implemented Fej\'er-Markov iteration, with multiplier \eqref{eq:SO3_fejer_multiplier}, is exactly the scheme \eqref{eq:fully_discrete_U}--\eqref{eq:fully_discrete_Z} covered by \cref{thm:fully_discrete_convergence}, and for $\eps>0$ it differs only by the terminal convention recorded below.
\end{proposition}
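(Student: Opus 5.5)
The plan is to expand the composite operation explicitly and identify it with \eqref{eq:discrete_Q} by means of the character expansion of a central kernel. Write the discrete forward transform of $\mathbf a$ as
\[
\widehat a^{\ell}_{mn}:=\sqrt{2\ell+1}\sum_{j=1}^{J}w_{j}\,a_{j}\,\overline{\pi_{\ell,mn}(g_{j})},
\]
which is the cubature version of \eqref{eq:SO3_forward}. Multiplying the spin-$\ell$ block by $\widehat q(\ell)$ and evaluating the resulting truncated expansion \eqref{eq:PW_expansion} at $g$ produces
\[
\sum_{\ell\le L}\widehat q(\ell)\sum_{m,n}\widehat a^{\ell}_{mn}\sqrt{2\ell+1}\,\pi_{\ell,mn}(g)
=\sum_{j}w_{j}a_{j}\sum_{\ell\le L}\widehat q(\ell)(2\ell+1)\sum_{m,n}\pi_{\ell,mn}(g)\overline{\pi_{\ell,mn}(g_{j})}.
\]
Since the sum over $\ell$ is finite, interchanging it with the sum over $j$ is harmless.

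The key step is the matrix identity
\[
\sum_{m,n}\pi_{\ell,mn}(g)\overline{\pi_{\ell,mn}(g_{j})}=\Tr\bigl(\pi_\ell(g)\pi_\ell(g_j)^{*}\bigr)=\chi_\ell(g_j^{-1}g).
\]
It follows from unitarity, $\overline{\pi_{ij}(h)}=\pi_{ji}(h^{-1})$, together with the homomorphism property. The inner sum therefore becomes $\sum_{\ell}\widehat q(\ell)(2\ell+1)\chi_\ell(g_j^{-1}g)$.

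Next I identify this sum with the kernel $q$. A central $q\in X_L$ has the expansion $q=\sum_{\ell\le L}\widehat q(\ell)d_\ell\chi_\ell$, and this is the normalization under which convolution by $q$ multiplies the spin-$\ell$ block by $\widehat q(\ell)$; it matches \eqref{eq:DN_def}. The inner sum is thus $q(g_j^{-1}g)$. Inversion invariance gives $q(g_j^{-1}g)=q(g^{-1}g_j)$, and the composite operation therefore equals $\sum_j w_j q(g^{-1}g_j)a_j$, which is \eqref{eq:discrete_Q}.

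The one point to watch is the normalization convention. I must confirm that the spin multipliers $\widehat q(\ell)$ are defined through $q=\sum\widehat q(\ell)d_\ell\chi_\ell$, with no stray $\sqrt{d_\ell}$ factors coming from \eqref{eq:PW_coeff}. Once that is checked, the computation above is exact for \emph{any} nodal vector. In particular, no cubature exactness is used in this identity. Exactness enters only afterwards, through \cref{prop:discrete_markov_filter}, which supplies the row sum and the moment properties.

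For the final assertion, take $q=F_N$ with the Fej\'er multiplier. For $\eps=0$ we have $q_{\delta,N}^{0}=F_N$, which is central, inversion invariant, and band-limited by \cref{thm:finite_rank_markov_filter}, with $\kappa N\le L$ by construction. The implemented step is then exactly \eqref{eq:fully_discrete_Z}. The Bellman step evaluates this expansion off-grid, which is \eqref{eq:fully_discrete_U}, so the iteration is the scheme covered by \cref{thm:fully_discrete_convergence}. For $\eps>0$ the same identity applies to $q_{\delta,N}^{\eps}$, whose multipliers are the Fej\'er multipliers times $e^{-\delta\eps c_\ell}$. The only remaining difference concerns how the terminal datum is filtered, which is a convention fixed in the text.
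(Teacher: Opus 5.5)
Your proof is correct and follows the paper's argument essentially verbatim: the cubature forward transform, the unitarity identity $\sum_{m,n}\overline{D^{\ell}_{mn}(g_{j})}\,D^{\ell}_{mn}(g)=\chi_{\ell}(g_{j}^{-1}g)$, the identification of $\sum_{\ell\le L}\widehat q(\ell)(2\ell+1)\chi_{\ell}$ with $q$ by centrality, band limitation and inversion invariance, and the observation that no cubature exactness is used. You treat the final assertion more explicitly than the paper does, though the band condition there is better stated as $F_N$ being supported in spins $\ell\le 2\ell_{D}\le L$ than as $\kappa N\le L$, which mixes the Casimir threshold with the spin band.
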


\begin{proof}
	The discrete forward transform of $\mathbf a$ is \eqref{eq:SO3_forward} with the integral replaced by the cubature sum, $\widehat{\mathbf a}^{\ell}_{mn}=\sqrt{2\ell+1}\sum_{j}w_{j}a_{j}\overline{D^{\ell}_{mn}(g_{j})}$. Multiplying by $\widehat q(\ell)$ and evaluating at $g$ gives
	\[
	\sum_{\ell\le L}\widehat q(\ell)\sum_{m,n}\widehat{\mathbf a}^{\ell}_{mn}\sqrt{2\ell+1}\,D^{\ell}_{mn}(g)
	=\sum_{j}w_{j}a_{j}\sum_{\ell\le L}\widehat q(\ell)\,(2\ell+1)\,\chi_{\ell}(g_{j}^{-1}g),
	\]
	by the unitarity identity $\sum_{m,n}\overline{D^{\ell}_{mn}(g_{j})}\,D^{\ell}_{mn}(g)=\chi_{\ell}(g_{j}^{-1}g)$. The inner sum equals $q(g_{j}^{-1}g)=q(g^{-1}g_{j})$, since $q$ is central, inversion invariant, and supported in the spins $\ell\le L$. No band limitation of the input and no cubature exactness enter this identity. Exactness enters through \cref{prop:discrete_markov_filter}, where it identifies the row sums and the moments of the discrete kernel with their Haar integrals, and this is what the range $R\le 2L+1$ above guarantees.
\end{proof}

In particular, neither an interpolation matrix nor any Lebesgue-constant amplification enters the loop.

For $\eps>0$ one detail differs. The implementation initializes the stored continuation with the filter alone, while the scheme of \cref{sec:markov} initializes with $\mathcal Q_{\delta,N}^{\eps}\phi$, so one viscous factor $e^{\delta\eps\Lap_{G}}$ is omitted at the terminal time. The difference changes the values by at most a constant multiple of $L_{\phi}\sqrt{\delta\eps}$, propagates without amplification through the non-expansive iteration, and vanishes for $\eps=0$, the setting of every Fej\'er-Markov run below.

\subsubsection{Exact coefficient-space transport} We discretize the control set by a finite family $\{u_{j}\}_{j=1}^{K_{u}}\subset U$, and keep each candidate control constant over a step. All dynamics used in the experiments are left-invariant, of the form $\dot g=g\,\widehat{(\omega_{0}+u)}$, so the one-step flow is the exact right translation $g\mapsto g\,h_{j}$, with $h_{j}=\exp_{\SO{3}}(\delta\,\widehat{(\omega_{0}+u_{j})})$ from Rodrigues' formula. On Peter-Weyl coefficients this shift is the exact block multiplication
\begin{equation}
	\widehat{V(\cdot\,h_{j})}^{\ell}=\widehat V^{\ell}\,D^{\ell}(h_{j})^{\!\top},
	\quad \ell=0,\dots,L,
	\label{eq:exact_transport}
\end{equation}
one per-$\ell$ product with the precomputed Wigner matrices $D^{\ell}(h_{j})$. The off-grid evaluation of $\widehat Z_{k+1}(\widehat\Phi^{u}_{\delta}(g_{j}))$ in \cref{alg:viscous_spectral} is thereby performed exactly, and the one-step flow itself is exact, so the flow-integration term $\delta^{p}$ of \cref{thm:fully_discrete_convergence} is absent. The only discretization errors of the Markov-filtered scheme are therefore the filter and time errors of the theorem and the control fill distance \eqref{eq:control_fill}. The heat-filter variant, whose multiplier is not finite-rank, retains the usual spectral aliasing of nonlinear terms \cite{KirbyKarniadakis2003}, and de-aliasing checks are reported with the experiments.

\subsubsection{Filters}
\label{ssec:impl_filters}
The Fej\'er-type Markov multiplier of \cref{rem:positive_filter_multiplier} is computed once per band from the Clebsch-Gordan rule. Writing $\theta_{\ell}:=\vartheta(\sqrt{c_{\ell}}/N)$, it reads
\begin{equation}
	m_{N}(\ell)=\frac{1}{Z_{N}(2\ell+1)}
	\sum_{\substack{\ell_{1},\ell_{2}\le \ell_{D}\\ |\ell_{1}-\ell_{2}|\le\ell\le\ell_{1}+\ell_{2}}}
	\theta_{\ell_{1}}\theta_{\ell_{2}}(2\ell_{1}+1)(2\ell_{2}+1),
	\label{eq:SO3_fejer_multiplier}
\end{equation}
which is a finite triangular sum, with the normalization $Z_{N}=\sum_{\ell\le\ell_{D}}\theta_{\ell}^{2}(2\ell+1)^{2}$. We take $\vartheta\in C_{c}^{\infty}$ equal to $1$ on $[0,1]$ with support in $[0,2]$, together with the kernel spin band $\ell_{D}=\lfloor L/2\rfloor$ and $N=\sqrt{c_{\ell_{D}}}/2$, so that the filter output lies in the stored band. Two measured constants of this filter are used below. The Jackson bound \eqref{eq:MN_Jackson} holds with $\|\mathcal M_{N}f-f\|_{\infty}\approx1.66\,\Lip(f)/N$, and the small-mode bias obeys the second-moment law $1-m_{N}(\ell)\approx0.65\,c_{\ell}/N^{2}$, the finite-rank analogue of the heat multiplier law $1-e^{-c_{\ell}/L_{N}^{2}}\approx c_{\ell}/L_{N}^{2}$. The heat filter and the exponential viscous step combine as before into the single diagonal multiplier $\exp(-c_{\ell}(\delta\eps+L_{N}^{-2}))$. In either case the terminal datum is filtered once, so a $K$-step run applies $K{+}1$ filters.

\subsubsection{Controls} Unless stated otherwise, we discretize the control ball $U=\{u\in\R^{3}:|u|\le u_{\max}\}$ by a symmetric stencil consisting of the zero control together with $n_{\mathrm{dir}}\in\{6,14\}$ unit directions, the signed coordinate axes and optionally the normalized cube diagonals, each scaled to $n_{\mathrm{mag}}\in\{1,2,3\}$ magnitudes in $(0,u_{\max}]$. This gives $K_{u}=1+n_{\mathrm{dir}}n_{\mathrm{mag}}$ candidates. Example~1 instead uses Fibonacci-distributed directions at full magnitude, $200$ of them (spherical covering radius $10.6^{\circ}$) in the canonical-scaling study and $400$ (covering radius $7.7^{\circ}$) in the refinement study of \cref{app:experiments}.

\subsection{Example 1. Validation against closed-form solutions}
\label{ssec:ex1}

\subsubsection{The convergence theorem at the canonical scaling}
\label{sssec:ex1_canonical}
Consider the problem of steering the attitude directly to the identity,
\begin{equation}
	\dot g=g\,\widehat u,\quad |u|\le 1,\quad \ell\equiv 0,\quad \phi(g)=\theta(g)=\dist_{\SO{3}}(g,I),\quad T=1.5.
	\label{eq:ex1_eikonal}
\end{equation}
The value function of \eqref{eq:ex1_eikonal} is known in closed form, since greedy geodesic descent is optimal, and
\begin{equation}
	V(g,t)=\max\bigl(\theta(g)-(T-t),\,0\bigr).
	\label{eq:ex1_eikonal_V}
\end{equation}
The terminal cost is $1$-Lipschitz but not $C^{1}$, and $V$ has a conical singularity at the identity, a kink on the sphere $\{\theta=T-t\}$, and the cut locus at $\theta=\pi$. The problem therefore lies in the minimal-regularity regime of \cref{thm:finite_rank_lipschitz_rate}. We run the Markov-filtered scheme at the canonical scaling of \cref{cor:canonical_scaling}, without artificial viscosity, taking $K=\lfloor T N\rceil$ steps of size $\delta=T/K$, so that $\delta\asymp N^{-1}$. \Cref{tab:ex1_canonical} reports the sup error over the full grid at $t=0$, for the prefiltered value $U_{0}$ and for the stored continuation $Z_{0}$.

\begin{table}[t]
	\centering
	\caption{Example 1. Sup error over the full grid at $t=0$ against the exact solution \eqref{eq:ex1_eikonal_V}, for the canonical scaling $\eps=0$, $\delta=T/\lfloor TN\rceil$ of \cref{cor:canonical_scaling}. The fitted order in $N$ is $0.55$, while the theorem predicts $1/2$.}
	\label{tab:ex1_canonical}
	\begin{tabular}{rrrrcc}
		\toprule
		$L$ & $\ell_{D}$ & $N$ & $K$ & $\norm{U_{0}-V(\cdot,0)}_{\infty}$ & $\norm{Z_{0}-V(\cdot,0)}_{\infty}$\\
		\midrule
		8 &  4 & 2.24 &  3 & 0.909 & 0.970\\
		12 &  6 & 3.24 &  5 & 0.833 & 0.873\\
		16 &  8 & 4.24 &  6 & 0.682 & 0.711\\
		20 & 10 & 5.24 &  8 & 0.616 & 0.636\\
		24 & 12 & 6.24 &  9 & 0.529 & 0.545\\
		28 & 14 & 7.25 & 11 & 0.492 & 0.504\\
		\bottomrule
	\end{tabular}
\end{table}

The errors decay with fitted order $0.55$ in $N$ (left panel of \cref{fig:ex1_canonical}), and the observed pre-asymptotic slopes are consistent with the predicted exponent $1/2$ of \eqref{eq:canonical_rate}. The computed iterates satisfy the predicted bounds. Since $\ell\equiv0$ and $\phi\ge0$, the exact iterates satisfy $0\le V_{k}\le\pi$, and every computed $U_{k}$ respects these bounds at every node and at every resolution although $\eps=0$. The Markov filter alone supplies the stability. On this problem optimal controls have full magnitude, and steering along the nearest available direction reduces the useful angular speed by the factor $1-\cos r_{\mathrm{cov}}$. The quantity $u_{\max}T(1-\cos r_{\mathrm{cov}})\approx0.026$ is therefore a directional indicator of the control-discretization effect, heuristic rather than a proven bound, and the stencil cannot stop within a step, which may add an effect of order $\delta$ near the target. The directional term is well below the measured errors. The absolute errors are large, roughly $30\%$ of the value range at the finest band. They come from the accumulated bias of the $K{+}1$ filter applications, of order $\sqrt{K+1}/N$ by the quadratic-mean mechanism of \cref{ssec:markov_convergence}, while for the continuous control ball the time discretization contributes nothing on this problem, greedy geodesic descent being optimal over a step of any length. Under the admissible couplings of \eqref{eq:admissible_scaling} the sup error decreases to $5.7\cdot10^{-2}$ at band $L=64$, and this refinement study is reported in \cref{app:experiments}.

\subsubsection{Filter saturation on a commuting problem}
\label{sssec:ex1_saturation}
We next consider the uncontrolled dynamics $\dot g=g\,\widehat\omega_{0}$ with $\omega_{0}=(0.3,-0.5,0.9)$, $\ell\equiv 0$ and $T=1$, together with the analytic class-function terminal cost $\phi(g)=e^{\cos\theta(g)}=\sum_{\ell}a_{\ell}\chi_{\ell}$. Transport and viscous flow act exactly on this expansion, so
\begin{equation}
	V^{\eps}(g,t)\;=\;\sum_{\ell\ge 0}a_{\ell}\,e^{-\eps(T-t)c_{\ell}}\,\chi_{\ell}\bigl(g\,e^{(T-t)\widehat\omega_{0}}\bigr)
	\label{eq:ex1_exact}
\end{equation}
is an exact solution, computable to machine precision as a one-dimensional sum, so the only scheme error is the accumulated filter bias. We use $K=10$ steps and $\eps=0$. With the filter off the scheme is exact to round-off, below $10^{-6}$, so transport and transforms add nothing on this commuting problem. The heat filter follows the saturated order $\beta_{h}=2$ of \cref{lem:filter_heat}, with local orders rising from $1.43$ to $1.91$ over $L=8,\dots,28$. The Fej\'er-Markov filter obeys the same second-moment law $1-m_{N}(\ell)\approx 0.65\,c_{\ell}/N^{2}$, with orders rising from $1.13$ to $1.44$ over $L=20,\dots,36$. Its error constant at equal stored band is larger by a factor of about ten, because its kernel scale is $N\approx L/4$ rather than $L$ (right panel of \cref{fig:ex1_canonical}). This factor measures the accuracy given up for exact finite-rank Markovianity, in line with the closing remark of \cref{ssec:fully_discrete_markov}. The viscous bias and the rate in $\delta$, measured on smooth variants of this setting, are reported in \cref{app:experiments}.

\begin{figure}[t]
	\centering
	\includegraphics[width=0.98\linewidth]{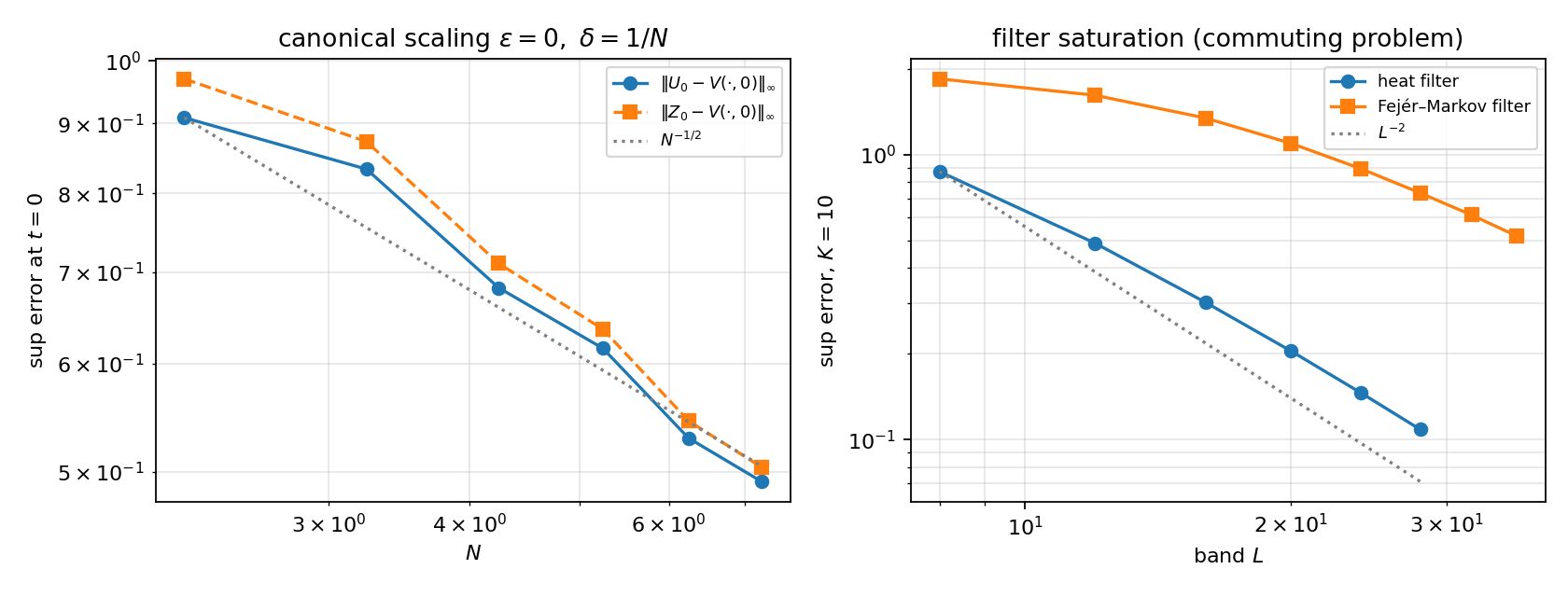}
	\caption{Example 1. The left panel shows the canonical-scaling errors of \cref{tab:ex1_canonical} against the exact nonsmooth solution \eqref{eq:ex1_eikonal_V}, together with the predicted slope $N^{-1/2}$ of \cref{cor:canonical_scaling}. The right panel shows the filter saturation on the commuting problem \eqref{eq:ex1_exact}. The heat filter attains the saturated order $2$ of \cref{lem:filter_heat}, and the Fej\'er-Markov filter follows the same $c_{\ell}/N^{2}$ law with kernel scale $N\approx L/4$.}
	\label{fig:ex1_canonical}
\end{figure}

\subsection{Example 2. Multi-target steering under an uncontrollable drift}
\label{ssec:ex2}

The second example is nonsmooth and carries an uncontrollable drift. The dynamics are
\begin{equation}
	\dot g=g\,(\widehat\omega_{0}+\widehat u),\quad \omega_{0}=(0,0,0.8),\quad |u|\le u_{\max}=1.5,
	\label{eq:ex2_dyn}
\end{equation}
with running cost $\ell(u)=1+\frac{0.4}{2}|u|^{2}$, horizon $T=1.5$ and $K_{u}=29$ control candidates, and the terminal cost is the Lipschitz multi-target function
\begin{equation}
	\begin{gathered}
		\phi(g)=\min_{j=1,2,3}\,\dist_{\SO{3}}(g,Q_{j}),\\
		Q_{1}=I,\quad
		Q_{2}=e^{0.9\widehat e_{1}}e^{0.5\widehat e_{2}},\quad
		Q_{3}=e^{-0.7\widehat e_{2}}e^{1.1\widehat e_{3}},
	\end{gathered}
	\label{eq:ex2_phi}
\end{equation}
whose gradient jumps across the equidistance sets and which has a conical singularity at each target. The value function then has gradient discontinuities, so \cref{thm:sup_norm} is unavailable and the filtered schemes of \cref{sec:stab,sec:markov} are needed. No closed-form solution is known. The example documents the instability predicted by \cref{lem:norm_mismatch}, measures the accuracy of the filtered schemes against a reference, tests the bias mechanism of \cref{ssec:markov_convergence}, and compares with classical grid-based semi-Lagrangian baselines.

All schemes below use the same control set, the same grids and the same exact one-step flows. They therefore approximate the same control-discretized problem, and the control-fill error of \eqref{eq:control_fill} is common to every entry. Errors are measured in the sup norm over a fixed set $E$ of $983$ points, consisting of the one-parameter subgroup $s\mapsto e^{s\widehat e_{3}}$ through $Q_{1}$, which lies entirely on the singular set of the Euler-angle chart, the geodesics through $Q_{1},Q_{2}$ and through $Q_{2},Q_{3}$, each sampled at $161$ points, and $500$ Haar-distributed points drawn once with a fixed seed. Grid-based schemes are evaluated on $E$ by their own interpolation rule and spectral schemes by exact synthesis.

\subsubsection{An upper-bound reference by trajectory optimization}
\label{sssec:ex2_reference}

No filtered run can serve as reference here. A $K$-step run applies the filter $K{+}1$ times, and each application removes a fraction $\approx c_{\ell}\,\nu_{1}$ of every mode, so a run accumulates the exposure
\begin{equation}
	\nu:=\;(K+1)\nu_{1}+\eps T,
	\quad
	\nu_{1}=
	\begin{cases}
		L^{-2} & \text{heat filter at scale } L,\\[1mm]
		0.65\,N^{-2} & \text{Fej\'er-Markov filter at scale } N,
	\end{cases}
	\label{eq:ex2_exposure}
\end{equation}
where $\nu_{1}$ is the second moment of one filter application (\cref{ssec:impl_filters}) and $\eps T$ accounts for the viscous multiplier. Runs of similar exposure smooth the value function by the same amount and agree with each other while sharing a common bias. Two heat runs at $(L,K,\eps)=(18,96,0.6/18)$ and $(20,120,0.6/20)$, whose exposures are $\sqrt\nu=0.591$ and $0.589$, agree to $2.9\cdot10^{-3}$ in the sup norm, yet both lie between $+0.31$ and $+0.76$ above the reference constructed below, with median $+0.65\approx1.1\sqrt{\nu}$, on a value function whose range on $E$ is about $[1.5,3.1]$.

The reference we adopt is instead obtained by open-loop trajectory optimization, independently of every scheme under test. For each $x\in E$ we minimize the exact rollout cost over controls that are piecewise constant on $M$ equal subintervals of $[0,T]$ with values in the $29$-point control set,
\begin{equation}
	J_{M}(x)=\min_{u_{0},\dots,u_{M-1}\in U_{29}}
	\Bigl\{\tfrac{T}{M}\sum_{k=0}^{M-1}\ell(u_{k})
	+\phi\bigl(x\,h_{u_{0}}\cdots h_{u_{M-1}}\bigr)\Bigr\},
	\ h_{u}=e^{(T/M)(\widehat\omega_{0}+\widehat u)}.
	\label{eq:ex2_cert}
\end{equation}
The multistart Gauss-Seidel search described next returns a feasible value $\widetilde J_{M}(x)\ge J_{M}(x)$, and every feasible value is a rigorous upper bound for the value at $x$. Since changing one control exchanges exactly one factor of the product $x\,h_{u_{0}}\cdots h_{u_{M-1}}$, each coordinate minimization in a sweep, performed exhaustively over the candidates, is exact, and the sweeps are iterated to convergence from six independent seeds per point. Refining $M=64\to128\to256$ lowers $\widetilde J_{M}$ by $3.7\cdot10^{-2}$ and then $1.6\cdot10^{-2}$, consistent with first-order convergence in $1/M$. The reference $\widetilde J:=\widetilde J_{256}$ is therefore a rigorous upper bound of the control-discretized value $J_{256}$, with a one-sided slack that we estimate, by extrapolation of the refinement differences, at about $2\cdot10^{-2}$. The estimate is empirical, and no lower bound is claimed. Every spectral and grid-based run of this example approaches $\widetilde J$ from above and none undershoots it by more than $6\cdot10^{-3}$, which is a consistency check, since a loose upper bound would be undershot by a converging scheme. Recomputing the $L=16$ heat run with the band-$2L$ quadrature of \cref{prop:discrete_markov_filter} changes it by $3.7\cdot10^{-3}$, so sampling aliasing is negligible at this accuracy.

\subsubsection{The norm-mismatch obstruction}
\label{sssec:ex2_instability}
Since $\ell\ge 1$ and $\phi\ge0$, the exact discrete value satisfies $V_{k}\ge(K-k)\delta$ in the global indexing, that is, $r\delta$ at $r:=K-k$ steps before the horizon. The bound is scheme-independent and requires no reference. Writing $U^{(r)}$ for a computed iterate $r$ steps before the horizon, the left panel of \cref{fig:ex2_instability} tracks $\min_{g}U^{(r)}-r\delta$ at $K=48$ for the plain Galerkin iteration \eqref{eq:VN_scheme} with $\eps=0$ at the bands $L=8,12,16$, and for the Markov-filtered scheme with $\eps=0$ at $L=16$. At every band the plain iterate breaks the bound within the first six steps and then loses a factor of about $1.1$ per step, reaching values between $-14$ and $-6$ at $r=48$, and at $L=12$ its energy fraction in the shells $\ell\ge8$ grows to $1.6\cdot10^{-2}$. This behavior is consistent with the amplification mechanism of \cref{lem:norm_mismatch}. The observed growth stays far below the worst-case bound $\Lambda_{N}\asymp N$, but the instability is present at every resolution and destroys the iterates well before the horizon. The Markov-filtered iterate respects the bound at every band with margin at least $+0.38$ and keeps its high-shell energy near $10^{-29}$ at $L=12$, although $\eps=0$. The finite-rank filter truncates and contracts the very shells that the projection amplifies, in agreement with the exact-cubature identity of \cref{prop:discrete_markov_filter}.

\begin{figure}[t]
	\centering
	\includegraphics[width=0.98\linewidth]{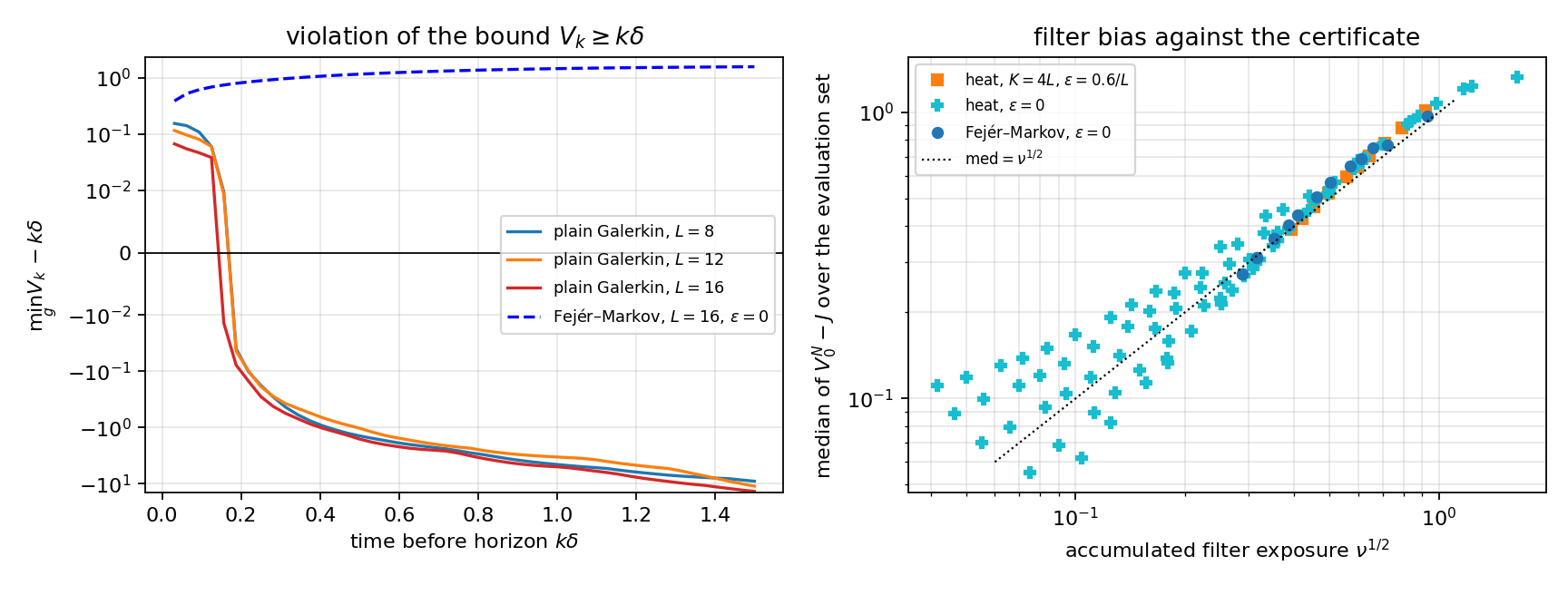}
	\caption{Example 2. The left panel shows the violation of the elementary bound $U^{(r)}\ge r\delta$ for the plain Galerkin iteration at $L=8,12,16$ and for the Fej\'er-Markov iteration at $L=16$, all with $\eps=0$ and $K=48$. The right panel shows the median over the evaluation set of the deviation $V^{N}_{0}-\widetilde J$ from the reference $\widetilde J$ for all spectral runs of this example, plotted against the accumulated filter exposure \eqref{eq:ex2_exposure}. Both filter families collapse onto the quadratic-mean law $\mathrm{med}\approx\nu^{1/2}$.}
	\label{fig:ex2_instability}
\end{figure}

\subsubsection{The size of the accumulated filter bias}
\label{sssec:ex2_bias}
The reference makes the systematic deviation of the filtered schemes directly measurable. For every spectral run of this example, both filter families and all couplings over $6\le L\le48$, the deviation $V^{N}_{0}-\widetilde J$ is positive on essentially all of $E$ and its median obeys
\[
\operatorname{med}_{E}\,\bigl(V^{N}_{0}-\widetilde J\bigr)\approx c\,\sqrt{\nu}
\quad\text{with}\quad c\in[0.75,\,1.15],
\]
where $\nu$ is the accumulated exposure \eqref{eq:ex2_exposure}. The right panel of \cref{fig:ex2_instability} shows the collapse. This is the quadratic-mean mechanism behind \cref{thm:finite_rank_lipschitz_rate}, whose error term $\sqrt{\eps+(\delta N^{2})^{-1}}$ equals $\sqrt{\nu/T}$ up to the second-moment constant of the filter. On these nonsmooth problems the measured deviation is also consistent with the predicted magnitude, with the caveat that a median against an upper-bound reference gauges the bias mechanism rather than the worst-case constant of the theorem. At a fixed band, couplings with fewer steps are therefore more accurate until the time error takes over, visible as the departures from the law at the smallest exposures.

\subsubsection{Comparison with grid-based semi-Lagrangian schemes}
\label{sssec:ex2_table}
As classical baselines we use the two grid-based semi-Lagrangian schemes, on the same grid, with the same exact flows and the same control set. The first evaluates the shifted value at the nearest grid node. The second interpolates trilinearly in the Euler-angle coordinates $(\alpha,\cos\beta,\gamma)$, periodically in $\alpha$ and $\gamma$ and clamped at the $\beta$-chart boundary. Both are monotone, like the Fej\'er-Markov scheme, so the comparison is between structure-preserving methods. Higher-order interpolation improves the constants by giving up monotonicity and accepting a stronger dependence on the chart \cite{FalconeFerretti2014}. The accuracy of a semi-Lagrangian scheme depends on the balance between time and interpolation errors, so each baseline is run for $K\in\{3,4,6,12,24,48,96\}$, with $K\le24$ for $L\ge24$, and \cref{tab:ex2} reports its best error, an oracle tuning against the reference. The Fej\'er-Markov scheme runs at the coupling $\delta\asymp N^{-1}$, $\eps=0$ of \cref{cor:canonical_scaling}, fixed in advance. The theory does not tie the heat variant with $\eps=0$ to a single coupling, so it is granted the same oracle over $K$ as the baselines, and the remaining schemes are thus tuned symmetrically.

\begin{table}[t]
	\centering
	\caption{Example 2. Sup deviation over the evaluation set from the upper-bound reference $\widetilde J$, whose one-sided slack is estimated at about $2\cdot10^{-2}$. The Fej\'er-Markov scheme runs at the coupling $\delta\asymp N^{-1}$, $\eps=0$ fixed in advance, and the resulting $K$ is listed. The heat filter at $\eps=0$ and the two grid-based baselines report their best error over the step counts $K\in\{3,4,6,12,24,48,96\}$, with $K\le24$ for $L\ge24$, chosen a posteriori against the reference, and the minimizing $K$ is listed. All schemes use the same grid, exact flows and control set. A subset of the computed bands is tabulated, and all of them appear in \cref{fig:ex2_comparison}.}
	\label{tab:ex2}
	\small
	\setlength{\tabcolsep}{4.2pt}
	\begin{tabular}{rrrrrrrrrr}
		\toprule
		& & \multicolumn{2}{c}{Fej\'er-Markov} & \multicolumn{2}{c}{heat, $\eps=0$} &
		\multicolumn{2}{c}{nearest SL} & \multicolumn{2}{c}{trilinear SL}\\
		\cmidrule(lr){3-4}\cmidrule(lr){5-6}\cmidrule(lr){7-8}\cmidrule(lr){9-10}
		$L$ & grid points & $K$ & err & err & $K$ & err & $K$ & err & $K$\\
		\midrule
		6 &   1\,372 &  3 & 1.150 & 0.527 & 3 & 0.510 & 3 & 0.428 & 3\\
		8 &   2\,916 &  3 & 0.939 & 0.405 & 3 & 0.480 & 3 & 0.328 & 6\\
		12 &   8\,788 &  5 & 0.842 & 0.299 & 4 & 0.330 & 3 & 0.239 & 6\\
		16 &  19\,652 &  6 & 0.702 & 0.233 & 4 & 0.267 & 4 & 0.191 & 6\\
		20 &  37\,044 &  8 & 0.628 & 0.198 & 6 & 0.240 & 4 & 0.174 & 6\\
		24 &  62\,500 &  9 & 0.543 & 0.169 & 6 & 0.200 & 4 & 0.147 & 12\\
		32 & 143\,748 & 12 & 0.450 & 0.145 & 6 & 0.172 & 6 & 0.111 & 12\\
		48 & 470\,596 & 18 & 0.349 & 0.102 & 12 & 0.125 & 6 & 0.090 & 12\\
		\bottomrule
	\end{tabular}
\end{table}

\begin{figure}[t]
	\centering
	\includegraphics[width=0.98\linewidth]{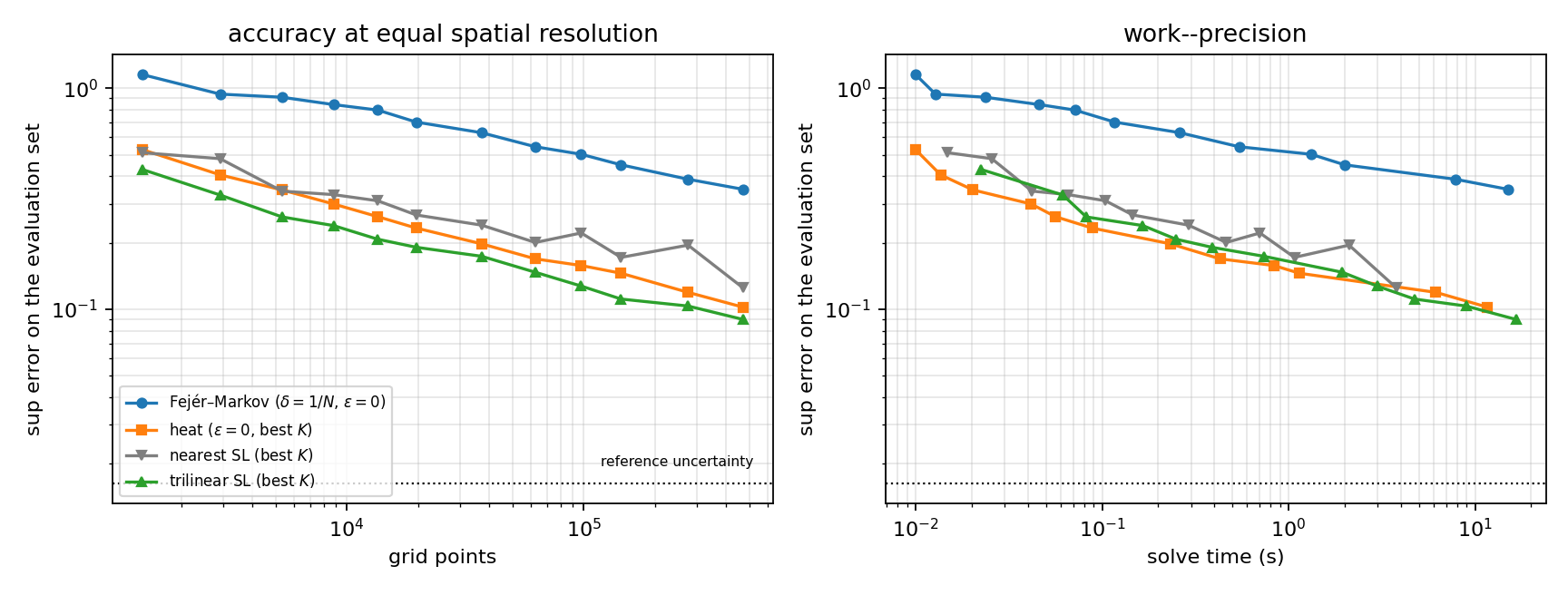}
	\caption{Example 2. The left panel shows the sup deviation on the evaluation set from the reference $\widetilde J$ at equal spatial resolution, as in \cref{tab:ex2}. The right panel shows the same errors against the solve time on a single CPU core. The dotted line marks the estimated one-sided slack of the reference.}
	\label{fig:ex2_comparison}
\end{figure}

The deviations from the reference decrease for every method as the grid is refined.
The nearest-node baseline converges erratically, because its error is sensitive to the alignment of the shifted points with the grid, and from $L=12$ on it is dominated by the tuned heat variant. The tuned heat variant and the trilinear baseline stay within about $30\%$ of each other at every resolution, with fitted orders $0.77$ and $0.73$ in the band, and they are indistinguishable in the work-precision plane (right panel of \cref{fig:ex2_comparison}).
At the finest grid they reach $0.102$ and $0.090$, about five times the estimated slack of the reference itself, at comparable solve times.
The Fej\'er-Markov scheme converges more slowly. Its fitted order against the reference is $0.61$ in $N$ over $L=6,\dots,48$, in agreement with the $N^{-1/2}$ rate of \cref{cor:canonical_scaling}, on a reference produced independently of the scheme family.
Its error is dominated by the accumulated bias of \cref{sssec:ex2_bias}.
One practical difference appears already at this level. The Fej\'er-Markov scheme requires no tuning, its parameters being tied to the single resolution $N$, whereas the other schemes are tuned over $K$ a posteriori. At the fixed step $\delta=T/48$, the trilinear errors roughly double and the nearest-node errors grow up to ninefold.

\subsubsection{Frame-independence and closed-loop quality}
\label{sssec:ex2_structure}

The accuracy table does not capture the structural properties for which the representation-theoretic formulation was built, and we quantify two of them. The first rests on an exact identity, the second on comparisons through $\widetilde J$.

The first is frame-independence. The problem \eqref{eq:ex2_dyn}--\eqref{eq:ex2_phi} is covariant under left translation, so replacing the targets $Q_{j}$ by $FQ_{j}$ for a fixed rotation $F$ replaces the value function $V$ by $V(F^{-1}\cdot)$, exactly, both for the continuous problem and for the semi-discrete iteration with exact flows. We solve the rotated problem for three Haar-random frames $F$ and measure the defect $\sup_{x\in E}\lvert V'(Fx)-V(x)\rvert$ at $L=32$, with every scheme at its configuration of \cref{tab:ex2} and with the heat variant also at $K=2L$.

For the spectral schemes the only non-equivariant ingredient is the sampling of the pointwise minimum, and the measured defects, $1.4\cdot10^{-3}$ for the Fej\'er-Markov filter and $8.7\cdot10^{-3}$ and $3.3\cdot10^{-3}$ for the two heat couplings, lie at the level of the aliasing effect and amount to at most $6\%$ of the respective errors. The trilinear defect is $0.044$, which is $40\%$ of its sup error, and the nearest-node defect is $0.23$, which exceeds its sup error. A significant share of the tuned baselines' accuracy is thus tied to the coordinate frame in which they are run, whereas the spectral schemes are substantially less sensitive to the chosen frame in these tests.

The second is the quality of the induced feedback law, which is the relevant metric when the value function is used as a controller. For each scheme we extract the greedy one-step-lookahead policy from its own value levels on its own time grid and roll out the closed loop from $164$ initial states, evaluating spectral lookaheads by exact coefficient transport and synthesis and grid lookaheads by the scheme's own interpolation. The realized cost is the cost of an admissible control and is evaluated exactly, so its excess over $\widetilde J$ compares two admissible strategies for the same control-discretized problem. \Cref{tab:ex2_struct} reports the results. Every policy performs better than its value error suggests. The heat variant at $K=2L$ has value error $0.30$ and median excess $0.028$, close to the trilinear baseline at $0.030$ whose value error is $2.7$ times smaller. About $9\%$ of its value error reaches the closed loop, consistent with a bias that is largely a state-independent shift and hence invisible to the $\operatorname{argmin}$, as the mechanism of \cref{sssec:ex2_bias} predicts.

The step count that minimizes the value error is not the one that minimizes the policy excess. The heat variant tuned to $K=6$ in \cref{tab:ex2} has policy excess $0.044$, which improves to $0.028$ at $K=2L=64$ while the value error doubles, so, in this experiment, the closed loop benefits more from frequent decisions than from value accuracy. The Fej\'er-Markov scheme, whose value column is the weakest, reaches $0.053$, improving from $0.142$ at $L=14$ as the band refines. The most negative excesses, of about $-0.03$, improve the upper bound at a handful of points and are consistent with its estimated slack.

\begin{table}[t]
	\centering
	\caption{Example 2. Structural comparison at $L=32$. The first four rows are the schemes at their configuration of \cref{tab:ex2}, with the listed step count $K$, and the last row is the heat variant at the longer-horizon coupling $K=2L$. The columns list the sup deviation from the reference $\widetilde J$, the equivariance defect maximized over three Haar-random frames, and the median and $90$th percentile of the closed-loop excess cost over the reference $\widetilde J$ from $164$ initial states.}
	\label{tab:ex2_struct}
	\small
	\begin{tabular}{lrrrrr}
		\toprule
		scheme & $K$ & sup err & defect & policy med & policy p90\\
		\midrule
		Fej\'er-Markov & 12 & 0.450 & $1.4\cdot10^{-3}$ & 0.053 & 0.107\\
		heat, $\eps=0$  &  6 & 0.145 & $8.7\cdot10^{-3}$ & 0.044 & 0.077\\
		nearest SL      &  6 & 0.172 & $2.3\cdot10^{-1}$ & 0.094 & 0.172\\
		trilinear SL    & 12 & 0.111 & $4.4\cdot10^{-2}$ & 0.030 & 0.051\\
		heat, $\eps=0$  & 64 & 0.297 & $3.3\cdot10^{-3}$ & 0.028 & 0.046\\
		\bottomrule
	\end{tabular}
\end{table}

\subsection{Discussion}
\label{ssec:num_discussion}

The experiments are consistent with the theoretical scaling, and the deviation of every filtered run from the reference follows the exposure law \eqref{eq:ex2_exposure} with a constant between $0.75$ and $1.15$, so the practical coupling question is that of budgeting $\nu$ against the time step. Example~2 also carries a methodological point. Filtered schemes of a common family agree with each other far more closely than with the value function, so reference solutions for nonsmooth HJB problems should be validated by scheme-independent means, such as the trajectory-based upper bound $\widetilde J$ obtained from \eqref{eq:ex2_cert}, rather than by internal refinement alone.

\bibliographystyle{plain}
\bibliography{ref}

\appendix
\section{Proof of the semi-Lagrangian rate estimate}
\label{app:proofs}

The proof transposes the corresponding Euclidean argument to the bi-invariant Riemannian structure of $(G,\dist_{G})$.
The only group specific facts used are those already collected in \cref{ssec:viscous_HJB}: the squared distance $\dist_{G}^{2}$ is smooth on the uniform diagonal neighbourhood $\mathcal U_{\iota}:=\{(g,h)\in G\times G \mid \dist_{G}(g,h)<\iota\}$, where $\iota>0$ is the injectivity radius (positive by compactness of $G$), and all constants below depend only on the data of \cref{ass:standing}, the horizon $T$, and curvature/injectivity-radius bounds of $G$. Recall that we write $\nabla^{\mathrm{body}}_{G}$ for the body-frame gradient \eqref{eq:body_gradient}.

\begin{proof}[Proof of \cref{thm:DPP_convergence}]
	Write $V_{k}:=\val_{\delta}(\cdot,t_{k})$, so that $V_{K}=\phi$ and $V_{k}=\mathcal T_{\delta}V_{k+1}$ for $k=K-1,\dots,0$.
	
	By \cref{prop:Tdelta_properties} the operator $\mathcal T_{\delta}$ is monotone, additively homogeneous, sup-norm non-expansive, and satisfies $\Lip(\mathcal T_{\delta}f)\le e^{L_{\xi}\delta}(L_{\ell}\delta+\Lip(f))$. Downward induction from $V_{K}=\phi$ gives, uniformly in $k$ and $\delta$,
	\[
	\|V_{k}\|_{\infty}\le \|\phi\|_{\infty}+T\|\ell\|_{\infty},
	\quad
	\Lip(V_{k})\le L_{1}:=e^{L_{\xi}T}\bigl(L_{\phi}+L_{\ell}T\bigr),
	\]
	and, since $\dist_{G}(\Phi^{u}_{\delta}(g),g)\le\delta\|\xi\|_{\infty}$, the discrete time-regularity $\|V_{k}-V_{k+1}\|_{\infty}\le\delta(\|\ell\|_{\infty}+L_{1}\|\xi\|_{\infty})$.

	Define $\mathcal F[\psi]:=-\partial_{t}\psi+\Ham(\cdot,\nabla^{\mathrm{body}}_{G}\psi)$, so that \eqref{eq:HJB} reads $\mathcal F[\val]=0$. Let $\psi$ be $C^{2}$ in space and time near the points involved. For a constant control $u$,
	\[
	\psi\bigl(\Phi^{u}_{\delta}(g),t\bigr)-\psi(g,t)
	=\int_{0}^{\delta}\bigl\langle\nabla^{\mathrm{body}}_{G}\psi\bigl(\Phi^{u}_{s}(g),t\bigr),\,\xi\bigl(\Phi^{u}_{s}(g),u\bigr)\bigr\rangle_{\Lie}\,\mathrm ds,
	\]
	and since $\dist_{G}(\Phi^{u}_{s}(g),g)\le s\|\xi\|_{\infty}$ while $\nabla^{\mathrm{body}}_{G}\psi$ and $\xi(\cdot,u)$ are Lipschitz on the relevant compact set, the integrand deviates from its value at $s=0$ by $Cs\|\psi\|_{C^{2}}$, uniformly in $u$. No differentiability of the flow in its initial condition is used. Adding $\delta\ell(g,u)$, minimizing over $u$, and expanding $\psi(g,t_{k})$ around $t_{k+1}$ in time, the residual $\rho_{k}[\psi]:=\psi(\cdot,t_{k})-\mathcal T_{\delta}\psi(\cdot,t_{k+1})$ satisfies
	\begin{equation}
		\bigl\|\rho_{k}[\psi]-\delta\,\mathcal F[\psi](\cdot,t_{k+1})\bigr\|_{\infty}\le C\delta^{2}\|\psi\|_{C^{2}}.
		\label{eq:app_trunc}
	\end{equation}
	If $\val\in C^{2}(G\times[0,T])$, then $\mathcal F[\val]=0$, and non-expansiveness of $\mathcal T_{\delta}$ gives $e_{k}\le e_{k+1}+C\delta^{2}\|\val\|_{C^{2}}$ for $e_{k}:=\|V_{k}-\val(\cdot,t_{k})\|_{\infty}$, whence the $O(\delta)$ rate.

	For merely Lipschitz $\val$ we regularize in space and time simultaneously, since a spatial regularization alone does not control the time expansion in \eqref{eq:app_trunc}. Recall that $\val$ is $L_{1}$-Lipschitz in space, after enlarging $L_{1}$, and $L_{2}$-Lipschitz in time, with $L_{2}:=\|\ell\|_{\infty}+L_{1}\|\xi\|_{\infty}$. For $\alpha>0$ define
	\[
	\val^{\alpha}(g,t):=\sup_{(h,s)\in G\times[0,T]}\Bigl\{\val(h,s)-\frac{\dist_{G}(g,h)^{2}+(t-s)^{2}}{2\alpha}\Bigr\},
	\]
	together with the inf-convolution $\val_{\alpha}$, defined with $\inf$ and $+$ in place of $\sup$ and $-$. The Lipschitz bounds give $0\le\val^{\alpha}-\val\le C_{2}\alpha$ and $0\le\val-\val_{\alpha}\le C_{2}\alpha$ with $C_{2}:=\tfrac12(L_{1}^{2}+L_{2}^{2})$, uniform space-time Lipschitz bounds for $\val^{\alpha}$ and $\val_{\alpha}$, and the localization
	$\dist_{G}(g,h^{*})\le C_{1}\alpha$, $|t-s^{*}|\le C_{1}\alpha$, $C_{1}:=2(L_{1}+L_{2})$, of every extremizing pair $(h^{*},s^{*})$ for the point $(g,t)$. We impose
	\begin{equation}
		0<\alpha\le\alpha_{0}<\min\Bigl\{1,\ \frac{\iota}{4C_{1}},\ \frac{T}{4C_{1}}\Bigr\},
		\qquad
		0<\delta\le\min\{\delta_{0},\alpha^{2}\},
		\label{eq:app_localization}
	\end{equation}
	with $\delta_{0}$ such that $C_{1}\alpha_{0}+\delta_{0}\|\xi\|_{\infty}<\iota$. Then all extremizing pairs lie in the smooth diagonal neighbourhood $\mathcal U_{\iota}$, one-step trajectories remain in it, and $s^{*}\in(0,T)$ whenever $C_{1}\alpha<t_{k}$ and $t_{k+1}<T-C_{1}\alpha$. Finally, for $\dist_{G}(g,h)<\iota$, differentiating $\dist_{G}(g\exp(sY),h\exp(sY))=\dist_{G}(g,h)$ at $s=0$ gives, in body coordinates,
	\begin{equation}
		-\frac{1}{2\alpha}(TL_{g})^{*}D_{g}\dist_{G}(g,h)^{2}
		=\frac{1}{2\alpha}(TL_{h})^{*}D_{h}\dist_{G}(g,h)^{2},
		\label{eq:app_covector}
	\end{equation}
	a common body covector $p$ with $\|p\|_{\Lie}=\dist_{G}(g,h)/\alpha$.
	
	Set $m_{k}:=\inf_{G}\bigl(V_{k}-\val^{\alpha}(\cdot,t_{k})\bigr)$. Since $V_{k+1}\ge\val^{\alpha}(\cdot,t_{k+1})+m_{k+1}$, monotonicity and additive homogeneity of $\mathcal T_{\delta}$ give $m_{k}\ge m_{k+1}-\sup_{G}\rho_{k}[\val^{\alpha}]$. Fix $k$ in the interior range, $g\in G$, and a maximizing pair $(h^{*},s^{*})$ for $\val^{\alpha}(g,t_{k})$, and set
	\[
	\psi(g',t'):=\val(h^{*},s^{*})-\frac{\dist_{G}(g',h^{*})^{2}+(t'-s^{*})^{2}}{2\alpha}.
	\]
	Then $\psi\le\val^{\alpha}$ with equality at $(g,t_{k})$, and $\psi$ is smooth on $\{\dist_{G}(\cdot,h^{*})<\iota\}$ with first derivatives $O(1)$ and second derivatives $O(1/\alpha)$, by the Hessian bound for $\dist_{G}^{2}$ on $\mathcal U_{\iota}$. Monotonicity and \eqref{eq:app_trunc} give
	\[
	\rho_{k}[\val^{\alpha}](g)\le\rho_{k}[\psi](g)\le\delta\,\mathcal F[\psi](g,t_{k+1})+C\frac{\delta^{2}}{\alpha}.
	\]
	Write $q:=-(t_{k}-s^{*})/\alpha$ and let $p$ be the covector of \eqref{eq:app_covector} at $(g,h^{*})$, so that $\partial_{t}\psi(g,t_{k+1})=q-\delta/\alpha$, $\nabla^{\mathrm{body}}_{G}\psi(g,\cdot)=p$ and $\|p\|_{\Lie}\le C_{1}$. The function $\varphi(h,s):=\val^{\alpha}(g,t_{k})+\bigl(\dist_{G}(g,h)^{2}+(t_{k}-s)^{2}\bigr)/(2\alpha)$ touches $\val$ from above at the interior point $(h^{*},s^{*})$, where $\partial_{s}\varphi=q$ and, again by \eqref{eq:app_covector}, the body covector is $p$. For every constant control $u$, with trajectory $\zeta$ issued from $(h^{*},s^{*})$, the dynamic programming principle and $\val\le\varphi$ give
	\[
	\varphi(h^{*},s^{*})=\val(h^{*},s^{*})\le\int_{s^{*}}^{s^{*}+\tau}\ell(\zeta_{r},u)\,\mathrm dr+\varphi(\zeta_{s^{*}+\tau},s^{*}+\tau),
	\]
	so dividing by $\tau$, letting $\tau\downarrow0$ and minimizing over $u$ yields the subsolution inequality $-q+\Ham(h^{*},p)\le0$ at the contact point. With the $g$-modulus \eqref{eq:Hamiltonian_g_modulus} and $\dist_{G}(g,h^{*})\le C_{1}\alpha$,
	\[
	\mathcal F[\psi](g,t_{k+1})=-q+\frac{\delta}{\alpha}+\Ham(g,p)\le C\alpha+\frac{\delta}{\alpha},
	\quad\text{hence}\quad
	\sup_{G}\rho_{k}[\val^{\alpha}]\le C\Bigl(\delta\alpha+\frac{\delta^{2}}{\alpha}\Bigr).
	\]

	For $t_{k}\ge T-2C_{1}\alpha$, the decomposition $V_{k}-\val^{\alpha}=(V_{k}-\val)-(\val^{\alpha}-\val)$, the discrete time-regularity $\|V_{k}-\val(\cdot,t_{k})\|_{\infty}\le C(T-t_{k})\le C\alpha$ and the sandwich bound give $m_{k}\ge-C\alpha$. For $t_{k}\le C_{1}\alpha$, at most $C_{1}\alpha/\delta+1$ indices, the uniform Lipschitz bounds of $\val^{\alpha}$ give the crude estimate $\rho_{k}[\val^{\alpha}]\le C\delta$, contributing $C\alpha$ in total. Iterating the recursion over the at most $K=T/\delta$ interior steps and using the sandwich once more,
	\[
	V_{k}-\val(\cdot,t_{k}) \ge -C\Bigl(\alpha+\frac{\delta}{\alpha}\Bigr),
	\qquad 0\le k\le K.
	\]
	
	With $M_{k}:=\sup_{G}(V_{k}-\val_{\alpha}(\cdot,t_{k}))$, monotonicity and additive homogeneity give $M_{k}\le M_{k+1}-\inf_{G}\rho_{k}[\val_{\alpha}]$. At a minimizing pair $(h^{*},s^{*})$ for $\val_{\alpha}(g,t_{k})$, the outer function $\widetilde\psi:=\val(h^{*},s^{*})+\bigl(\dist_{G}(\cdot,h^{*})^{2}+(\cdot-s^{*})^{2}\bigr)/(2\alpha)$ satisfies $\widetilde\psi\ge\val_{\alpha}$ with equality at $(g,t_{k})$, so that $\rho_{k}[\val_{\alpha}](g)\ge\rho_{k}[\widetilde\psi](g)\ge\delta\,\mathcal F[\widetilde\psi](g,t_{k+1})-C\delta^{2}/\alpha$, while the inner function $\widetilde\varphi:=\val_{\alpha}(g,t_{k})-\bigl(\dist_{G}(g,\cdot)^{2}+(t_{k}-\cdot)^{2}\bigr)/(2\alpha)$ touches $\val$ from below at $(h^{*},s^{*})$. Applying the dynamic programming principle with an $o(\tau)$-optimal control, expanding $\widetilde\varphi$ along the corresponding trajectory, dividing by $\tau$ and letting $\tau\downarrow0$ yields
	\[
	\widetilde q+\inf_{u\in U}\bigl\{\ell(h^{*},u)+\langle\widetilde p,\xi(h^{*},u)\rangle_{\Lie}\bigr\}\le0,
	\ \text{that is}\
	-\widetilde q+\Ham(h^{*},\widetilde p)\ge0,
	\]
	with $\widetilde q:=(t_{k}-s^{*})/\alpha$ and $\widetilde p$ the common covector of \eqref{eq:app_covector}. Since $\partial_{t}\widetilde\psi(g,t_{k+1})=\widetilde q+\delta/\alpha$ and $\nabla^{\mathrm{body}}_{G}\widetilde\psi(g,\cdot)=\widetilde p$, the same Hamiltonian modulus gives $\mathcal F[\widetilde\psi](g,t_{k+1})\ge-C\alpha-\delta/\alpha$, hence $\inf_{G}\rho_{k}[\val_{\alpha}]\ge-C(\delta\alpha+\delta^{2}/\alpha)$. The boundary layers are treated as before, and the backward iteration gives $V_{k}-\val(\cdot,t_{k})\le C(\alpha+\delta/\alpha)$ for all $k$.

	Both bounds hold under \eqref{eq:app_localization}, and $\alpha=\sqrt{\delta}$ is admissible, which gives $\sup_{k}\|V_{k}-\val(\cdot,t_{k})\|_{\infty}\le C\sqrt{\delta}$. This is the transplant to $(G,\dist_{G})$ of the Crandall-Lions rate estimate \cite{CrandallLions1984}, in the monotone-scheme form of \cite{Falcone1987,BardiCapuzzoDolcetta1997}.
\end{proof}

\section{Proof of the vanishing-viscosity estimate}
\label{app:VV_proof}

This appendix proves the vanishing-viscosity estimate, \cref{thm:VV}. The following almost-everywhere criterion is used in the last step of the proof.

\begin{lemma}[Almost-everywhere criterion for $W^{2,\infty}$ functions]
\label{lem:ae_viscosity}
Let $Q:=G\times(0,T)$ and $a>0$. Suppose that
$w\in W^{2,\infty}_{\mathrm{loc}}(Q)\cap C(Q)$ and set
\[
  \mathcal F_{a}[w]
  :=\partial_{\tau}w
  +\Ham\bigl(g,\nabla_{G}^{\mathrm{body}}w\bigr)
  -a\Lap_{G}w,
\]
where the derivatives are understood almost everywhere. If
$\mathcal F_{a}[w]\le0$ almost everywhere in $Q$, then $w$ is a
viscosity subsolution of $\mathcal F_{a}[w]=0$. If
$\mathcal F_{a}[w]\ge0$ almost everywhere, then $w$ is a viscosity
supersolution.
\end{lemma}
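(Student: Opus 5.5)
The plan is to regularize $w$ by a convolution that commutes exactly with the left-invariant operators, check that the regularized functions are classical approximate sub- or supersolutions, and pass to the limit with the stability of viscosity sub- and supersolutions under local uniform convergence. I treat the subsolution case; the supersolution case is symmetric, with all inequalities reversed. The argument never uses convexity of $\Ham$ in $p$, only its continuity in $g$ and Lipschitz continuity in $p$, so the two cases are genuinely symmetric.

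\emph{Step 1: mollification adapted to the group.} Fix a compact set $Q'=G\times[\tau_{1},\tau_{2}]\subset Q$. Let $\rho_{\eta}\ge0$ be a smooth approximate identity on $G$ with support shrinking to $e$, and $\theta_{\eta}$ a standard mollifier in time. Set
\[
w_{\eta}(g,\tau):=\int_{G}\int_{\R}w(zg,\tau-s)\,\rho_{\eta}(z)\,\theta_{\eta}(s)\,\mathrm ds\,\mathrm d\mu(z),
\]
which is defined on $Q'$ for small $\eta$ and is smooth. Left-invariant vector fields commute with left translations, since $\widetilde X(f\circ L_{z})=(\widetilde Xf)\circ L_{z}$. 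Hence
\[
\nabla_{G}^{\mathrm{body}}w_{\eta}=(\nabla_{G}^{\mathrm{body}}w)_{\eta},\qquad
\Lap_{G}w_{\eta}=(\Lap_{G}w)_{\eta},\qquad
\partial_{\tau}w_{\eta}=(\partial_{\tau}w)_{\eta},
\]
where $(\cdot)_{\eta}$ denotes the same averaging. The first identity uses that $\Lap_{G}=\sum_{j}\widetilde X_{j}^{2}$ and that the distributional derivatives of a $W^{2,\infty}_{\mathrm{loc}}$ function are its a.e.\ derivatives. This exact commutation is the reason for using group convolution instead of chart mollifiers, because chart mollifiers would produce commutator terms in the second-order part.

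\emph{Step 2: the equation for $w_{\eta}$.} The hypothesis gives, at almost every point, $\partial_{\tau}w-a\Lap_{G}w\le-\Ham(g,\nabla_{G}^{\mathrm{body}}w)$. Averaging this pointwise inequality against the nonnegative weight $\rho_{\eta}\theta_{\eta}$ and using Step~1 yields
\[
\mathcal F_{a}[w_{\eta}](g,\tau)\le \Ham\bigl(g,\nabla_{G}^{\mathrm{body}}w_{\eta}(g,\tau)\bigr)-\int\!\!\int\Ham\bigl(zg,\nabla_{G}^{\mathrm{body}}w(zg,\tau-s)\bigr)\rho_{\eta}(z)\theta_{\eta}(s)=:r_{\eta}(g,\tau).
\]
Since $w\in W^{2,\infty}_{\mathrm{loc}}$, the gradient $\nabla_{G}^{\mathrm{body}}w$ is locally Lipschitz, hence uniformly continuous on compacts, and $\nabla_{G}^{\mathrm{body}}w_{\eta}\to\nabla_{G}^{\mathrm{body}}w$ uniformly on $Q'$. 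The Hamiltonian is uniformly continuous in $g$ on bounded sets of covectors, by compactness of $U$ and continuity of $\xi,\ell$, and Lipschitz in $p$ uniformly in $g$. Together these give $\|r_{\eta}\|_{L^{\infty}(Q')}\to0$. So $w_{\eta}$ is a classical, hence viscosity, subsolution of $\mathcal F_{a}[\cdot]\le\|r_{\eta}\|_{\infty}$ on the interior of $Q'$.

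\emph{Step 3: passage to the limit.} We have $w_{\eta}\to w$ locally uniformly because $w$ is continuous. Standard stability then gives the conclusion. Let $\varphi$ be a $C^{2}$ test function with $w-\varphi$ having a strict local maximum at $(g_{0},\tau_{0})$. Then $w_{\eta}-\varphi$ has local maxima at points $(g_{\eta},\tau_{\eta})\to(g_{0},\tau_{0})$. At these points the classical inequalities $\partial_{\tau}\varphi=\partial_{\tau}w_{\eta}$, $\nabla\varphi=\nabla w_{\eta}$ and $\Lap_{G}\varphi\ge\Lap_{G}w_{\eta}$ hold; the last one uses that $-\Lap_{G}$ is degenerate elliptic, being a sum of squares of vector fields. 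They give $\mathcal F_{a}[\varphi](g_{\eta},\tau_{\eta})\le\|r_{\eta}\|_{\infty}$. Letting $\eta\to0$ and using continuity of $\Hamiltonian$-terms, $\mathcal F_{a}[\varphi](g_{0},\tau_{0})\le0$.

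\emph{Main obstacle.} The delicate point is Step~2: the second-order term is only in $L^{\infty}$ and does not converge uniformly, so it must commute exactly with the mollification rather than merely approximately. The right-sided group convolution in Step~1 is chosen precisely to achieve this. The first-order nonlinearity is then harmless, because the gradient of a $W^{2,\infty}$ function is continuous.
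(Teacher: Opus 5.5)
Your proof is correct, but it follows a different route from the paper. The paper argues in a local chart. A $W^{2,\infty}_{\mathrm{loc}}$ function has a $C^{1,1}_{\mathrm{loc}}$ representative, so it is both semiconvex and semiconcave. When a $C^{2}$ function $\varphi$ touches $w$ from above at $z_{0}$, Jensen's maximum principle supplies points $z_{n}\to z_{0}$, drawn from a set of positive measure and hence choosable where the a.e.\ inequality holds. At these points $w$ is twice differentiable, with $Dw(z_{n})\to D\varphi(z_{0})$ and $D^{2}w(z_{n})\le D^{2}\varphi(z_{0})+o(1)$. Continuity and degenerate ellipticity of $\mathcal F_{a}$ in coordinates then pass the inequality to the limit.

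You instead build smooth approximate subsolutions by averaging over left translates. This commutes exactly with every $\widetilde X_{j}$, hence with $\nabla_{G}^{\mathrm{body}}$ and $\Lap_{G}$. The only defect is then the first-order term $r_{\eta}$, which vanishes uniformly because $\nabla_{G}^{\mathrm{body}}w$ is continuous, and the elementary stability argument concludes.

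Your route is intrinsic and self-contained: it needs neither Jensen's lemma nor the selection of twice-differentiable points. The price is that it uses the linearity of $\mathcal F_{a}$ in the second-order part, since a nonlinear dependence on $D^{2}w$ would not pass through the averaging. The Jensen argument, by contrast, works verbatim for any continuous degenerate-elliptic operator.

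Two small remarks.

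First, the exact commutation you single out as the main obstacle is a convenience rather than a necessity. Mollifying in a chart produces the commutator $a^{ij}(\partial_{ij}w)_{\eta}-(a^{ij}\partial_{ij}w)_{\eta}$. This is bounded by the modulus of continuity of the coefficients at scale $\eta$ times $\|D^{2}w\|_{L^{\infty}}$, so it vanishes uniformly. Merely bounded second derivatives are therefore harmless for a linear second-order term.

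Second, your formula $w_{\eta}(g,\tau)=\int\!\!\int w(zg,\tau-s)\rho_{\eta}(z)\theta_{\eta}(s)\,\mathrm ds\,\mathrm d\mu(z)$ averages over left translates. That is exactly what yields $\widetilde X(f\circ L_{z})=(\widetilde Xf)\circ L_{z}$, so calling it a ``right-sided'' convolution in your last paragraph is a misnomer. In any case $\Lap_{G}$ is bi-invariant and would commute with either kind of averaging.
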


\begin{proof}
The assertion is local. In a smooth space-time chart, a
$W^{2,\infty}_{\mathrm{loc}}$ function has a $C^{1,1}_{\mathrm{loc}}$
representative and is both semiconvex and semiconcave. Suppose that
$\varphi\in C^{2}$ touches $w$ from above at $z_{0}$. Jensen's maximum
principle gives points $z_{n}\to z_{0}$ at which $w$ is twice
differentiable and
\[
  Dw(z_{n})\to D\varphi(z_{0}),
  \quad
  D^{2}w(z_{n})\le D^{2}\varphi(z_{0})+o(1).
\]
The coordinate expression of $\mathcal F_{a}$ is continuous and
degenerate elliptic in the spatial Hessian. Evaluating the
a.e. inequality at $z_{n}$ and passing to the limit therefore gives
the viscosity subsolution inequality at $z_{0}$. The supersolution
statement follows analogously from a test function touching from
below. This is the standard almost-everywhere characterization of
viscosity inequalities for semiconvex and semiconcave functions, see,
for example, \cite[Section~3]{CrandallIshiiLions1992}.
\end{proof}

\begin{proof}[Proof of \cref{thm:VV}]
We divide the proof into five steps.

\medskip
\noindent
\emph{Step 1. Time reversal, Hamiltonian moduli, and well-posedness.}
Set
\[
  u(g,\tau):=V(g,T-\tau),
  \quad
  u^{\eps}(g,\tau):=V^{\eps}(g,T-\tau),
  \quad 0\le\tau\le T.
\]
Then $u$ and $u^{\eps}$ have the common initial datum $\phi$ and solve,
in the viscosity sense,
\begin{align}
  \partial_{\tau}u
  +\Ham\bigl(g,\nabla_{G}^{\mathrm{body}}u\bigr)&=0,
  \label{eq:VV_forward_inviscid}\\
  \partial_{\tau}u^{\eps}
  +\Ham\bigl(g,\nabla_{G}^{\mathrm{body}}u^{\eps}\bigr)
  -\eps\Lap_{G}u^{\eps}&=0.
  \label{eq:VV_forward_viscous}
\end{align}
The definition of $\Ham$ and \cref{ass:standing} imply
\begin{align}
  |\Ham(g,p)-\Ham(g,q)|
  &\le \|\xi\|_{L^{\infty}(G\times U)}\|p-q\|_{\Lie},
  \label{eq:Hamiltonian_p_modulus}\\
  |\Ham(g,p)-\Ham(h,p)|
  &\le (L_{\xi}\|p\|_{\Lie}+L_{\ell})\dist_{G}(g,h).
  \label{eq:Hamiltonian_g_modulus}
\end{align}
For each fixed $\eps>0$, \eqref{eq:VV_forward_viscous} is uniformly
parabolic on the compact manifold $G$. The second-order viscosity
comparison principle gives uniqueness. Existence follows from
Perron's method, using the constant barriers
\[
  -\|\phi\|_{\infty}-\tau\|\ell\|_{\infty}
  \quad\text{and}\quad
  \|\phi\|_{\infty}+\tau\|\ell\|_{\infty},
\]
or from the stochastic-control representation used below.

\medskip
\noindent
\emph{Step 2. A spatial Lipschitz estimate uniform in $\eps$.}
Let $\{X_{1},\ldots,X_{d}\}$ be an orthonormal basis of $\Lie$ and let
$\widetilde X_{j}(g):=(TL_{g})X_{j}$. The controlled Stratonovich
system
\begin{equation}
  dX_{s}
  =(TL_{X_{s}})\xi(X_{s},a_{s})\,ds
  +\sqrt{2\eps}\sum_{j=1}^{d}\widetilde X_{j}(X_{s})\circ dW_{s}^{j}
  \label{eq:VV_controlled_SDE}
\end{equation}
has generator
\[
  f\mapsto
  \langle\nabla_{G}^{\mathrm{body}}f,\xi(\cdot,a)\rangle_{\Lie}
  +\eps\Lap_{G}f.
\]
The standard stochastic dynamic-programming principle therefore
identifies $u^{\eps}$ with
\begin{equation}
  \inf_{a}\mathbb E\left[
    \int_{0}^{\tau}\ell(X_{s},a_{s})\,ds+\phi(X_{\tau})
  \right].
  \label{eq:VV_stochastic_value}
\end{equation}
For $\eps=0$ this reduces to the deterministic value $u$.

Couple two systems starting at $g$ and $h$ by using the same admissible
control and Brownian motion. The noise vector
\[
  \bigl((TL_{g})X_{j},(TL_{h})X_{j}\bigr)
\]
is tangent to the simultaneous right translation
$(g,h)\mapsto(g\exp(sX_{j}),h\exp(sX_{j}))$. Since the metric is
bi-invariant, this diagonal action preserves $\dist_{G}$. Decompose
the drift as
\begin{align*}
 &\bigl((TL_{g})\xi(g,a),(TL_{h})\xi(h,a)\bigr)\\
 &\qquad=
 \bigl((TL_{g})\xi(g,a),(TL_{h})\xi(g,a)\bigr)
 +\bigl(0,(TL_{h})(\xi(h,a)-\xi(g,a))\bigr).
\end{align*}
The first pair is again tangent to an isometric diagonal orbit. The
Stratonovich chain rule away from the cut locus, together with the
standard Calabi barrier argument at the cut locus (see, e.g., \cite{Kendall1986} for this device in the coupling context), consequently gives
\[
  D^{+}\dist_{G}(X_{s}^{g},X_{s}^{h})
  \le L_{\xi}\dist_{G}(X_{s}^{g},X_{s}^{h}).
\]
Hence, pathwise,
\begin{equation}
  \dist_{G}(X_{s}^{g},X_{s}^{h})
  \le e^{L_{\xi}s}\dist_{G}(g,h).
  \label{eq:VV_synchronous_distance}
\end{equation}
Using the same control in the two cost functionals, then taking the
infimum and interchanging $g$ and $h$, yields
\begin{equation}
  |u^{\eps}(g,\tau)-u^{\eps}(h,\tau)|
  \le \Lambda(\tau)\dist_{G}(g,h),
  \quad
  \Lambda(\tau)
  :=L_{\phi}e^{L_{\xi}\tau}
  +L_{\ell}\int_{0}^{\tau}e^{L_{\xi}s}\,ds.
  \label{eq:VV_uniform_Lip_eps}
\end{equation}
The same estimate holds for $u$ by taking $\eps=0$. Since $\Lambda(T)\le e^{L_{\xi}T}(L_{\phi}+L_{\ell}T)=L$, this proves the uniform Lipschitz bound \eqref{eq:VV_global_Lip_constant} asserted in the theorem.

\medskip
\noindent
\emph{Step 3. The squared-distance penalization.}
Let
\[
  \Theta(g,h):=\frac12\dist_{G}(g,h)^{2}.
\]
Fix $r_{0}$ strictly smaller than the injectivity radius of $G$. On
$\mathcal U_{r_{0}}:=\{(g,h):\dist_{G}(g,h)<r_{0}\}$, the function
$\Theta$ is smooth and
\begin{equation}
  \|D^{2}_{g,h}\Theta\|\le C_{\Theta}
  \quad\text{on }\mathcal U_{r_{0}}
  \label{eq:VV_hessian_rho_bound}
\end{equation}
for a geometric constant $C_{\Theta}$.

Bi-invariance gives
\[
  \Theta(g\exp(sY),h\exp(sY))=\Theta(g,h)
  \qquad(Y\in\Lie).
\]
Differentiating at $s=0$ shows that the body covectors generated by
$\Theta/\alpha$,
\[
  p_{g}:=\frac1\alpha(TL_{g})^{*}D_{g}\Theta,
  \quad
  p_{h}:=-\frac1\alpha(TL_{h})^{*}D_{h}\Theta,
\]
coincide in $\Lie^{*}\simeq\Lie$:
\begin{equation}
  p_{g}=p_{h}=:p,
  \quad
  \|p\|_{\Lie}=\frac{\dist_{G}(g,h)}{\alpha}.
  \label{eq:VV_body_covectors_equal}
\end{equation}
This identity is the group-specific ingredient that permits the use of
\eqref{eq:Hamiltonian_g_modulus} with one common body covector.

We shall also use a direct consequence of the parabolic theorem of
sums (see \cite{CrandallIshiiLions1992} and, for its Riemannian form, \cite{AzagraFerreraSanz2008}). At a maximum point involving the penalization $\Theta/\alpha$, it
provides symmetric forms $X$ on $T_{g}G$ and $Y$ on $T_{h}G$ such that
\[
  \begin{pmatrix}X&0\\0&-Y\end{pmatrix}
  \le A+\mu A^{2},
  \quad
  A:=D^{2}_{g,h}(\Theta/\alpha).
\]
Taking $\mu=\alpha$ and using \eqref{eq:VV_hessian_rho_bound}, we obtain
\begin{equation}
  \Tr X\le\frac{C_{D}}{\alpha},
  \quad
  -\Tr Y\le\frac{C_{D}}{\alpha},
  \label{eq:VV_trace_bounds}
\end{equation}
where $C_{D}$ depends only on $G$ and $r_{0}$.

\medskip
\noindent
\emph{Step 4. Comparison of $u$ and $u^{\eps}$.}
For $\alpha,\lambda,\sigma>0$, consider
\[
  \Psi(g,h,\tau)
  :=u(g,\tau)-u^{\eps}(h,\tau)
  -\frac{\Theta(g,h)}{\alpha}-\lambda\tau-\frac{\sigma}{T-\tau}.
\]
Let $(\bar g,\bar h,\bar\tau)$ be a maximum point, and note that the last penalization forces $\bar\tau<T$. Comparing it with
$(\bar g,\bar g,\bar\tau)$ and using \eqref{eq:VV_uniform_Lip_eps}
gives
\begin{equation}
  \bar d:=\dist_{G}(\bar g,\bar h)\le2L\alpha.
  \label{eq:VV_distance_at_max}
\end{equation}
Choose
\[
  \alpha_{0}:=\frac{r_{0}}{4\max\{L,1\}}.
\]
Then $0<\alpha\le\alpha_{0}$ ensures that the maximum lies in
$\mathcal U_{r_{0}}$.

If $\bar\tau>0$, the parabolic theorem of sums gives $a-b=\lambda+\sigma(T-\bar\tau)^{-2}\ge\lambda$,
the common body covector $p$ in \eqref{eq:VV_body_covectors_equal},
with $\|p\|\le2L$, and
\[
  a+\Ham(\bar g,p)\le0,
  \quad
  b+\Ham(\bar h,p)-\eps\Tr Y\ge0.
\]
Therefore, by \eqref{eq:Hamiltonian_g_modulus},
\eqref{eq:VV_distance_at_max}, and \eqref{eq:VV_trace_bounds},
\begin{align}
  \lambda
  &\le \Ham(\bar h,p)-\Ham(\bar g,p)-\eps\Tr Y\notag\le C_{H}\alpha+C_{D}\frac{\eps}{\alpha},
  \label{eq:VV_interior_inequality}
\end{align}
where one may take
\[
  C_{H}:=2L(2LL_{\xi}+L_{\ell}).
\]
Choosing
$\lambda=C_{H}\alpha+C_{D}\eps/\alpha+\eta$ with $\eta>0$
rules out an interior maximum. At $\tau=0$,
\[
  \Psi(g,h,0)
  \le L_{\phi}\dist_{G}(g,h)
       -\frac{\dist_{G}(g,h)^{2}}{2\alpha}
  \le\frac12L_{\phi}^{2}\alpha.
\]
Taking $h=g$, then the supremum over $(g,\tau)$, and letting first $\sigma\downarrow0$, by continuity of $u-u^{\eps}$ up to $\tau=T$, and then $\eta\downarrow0$, gives
\begin{equation}
  \sup_{G\times[0,T]}(u-u^{\eps})
  \le
  \left(\frac12L_{\phi}^{2}+C_{H}T\right)\alpha
  +C_{D}\frac{\eps T}{\alpha}.
  \label{eq:VV_one_sided_bound}
\end{equation}
For the reverse inequality, maximize
\[
  u^{\eps}(g,\tau)-u(h,\tau)
  -\frac{\Theta(g,h)}{\alpha}-\lambda\tau-\frac{\sigma}{T-\tau}.
\]
The same argument uses the estimate $\Tr X\le C_{D}/\alpha$ in
\eqref{eq:VV_trace_bounds} and yields the same right-hand side.
Consequently,
\begin{equation}
  \|u-u^{\eps}\|_{L^{\infty}(G\times[0,T])}
  \le A_{T}\alpha+C_{D}\frac{\eps T}{\alpha},
  \quad
  A_{T}:=\frac12L_{\phi}^{2}+C_{H}T,
  \label{eq:VV_two_sided_alpha}
\end{equation}
which proves \eqref{eq:VV_rate_alpha} with $B_{G}:=C_{D}$.

Whenever
$\alpha_{*}:=(C_{D}\eps T/A_{T})^{1/2}\le\alpha_{0}$, optimizing at
$\alpha=\alpha_{*}$ gives
\[
  \|u-u^{\eps}\|_{\infty}
  \le2\sqrt{A_{T}C_{D}\eps T}.
\]
For the remaining $\eps\in(0,1]$, the same form of bound follows after
enlarging the constant, using the uniform boundedness of the two value
functions. Reversing time proves \eqref{eq:VV_rate}.

\medskip
\noindent
\emph{Step 5. The $O(\eps)$ estimate under $W^{2,\infty}$ regularity.}
Assume
$V\in W^{2,\infty}(G\times(0,T))\cap C(G\times[0,T])$ and retain the
time-reversed function $u$. Its canonical representative is
$C^{1,1}_{\mathrm{loc}}$ in space-time. Since $u$ is a viscosity
solution of the first-order equation, the equation
\[
  \partial_{\tau}u
  +\Ham\bigl(g,\nabla_{G}^{\mathrm{body}}u\bigr)=0
\]
holds at every differentiability point, hence almost everywhere.
Set
\[
  M:=\|\Lap_{G}u\|_{L^{\infty}(G\times(0,T))}
  =\|\Lap_{G}V\|_{L^{\infty}(G\times(0,T))}
\]
and define
\[
  u_{-}(g,\tau):=u(g,\tau)-\eps M\tau,
  \quad
  u_{+}(g,\tau):=u(g,\tau)+\eps M\tau.
\]
Almost everywhere,
\begin{align*}
  \partial_{\tau}u_{-}
  +\Ham\bigl(g,\nabla_{G}^{\mathrm{body}}u_{-}\bigr)
  -\eps\Lap_{G}u_{-}
  &=-\eps(M+\Lap_{G}u)\le0,\\
  \partial_{\tau}u_{+}
  +\Ham\bigl(g,\nabla_{G}^{\mathrm{body}}u_{+}\bigr)
  -\eps\Lap_{G}u_{+}
  &=\eps(M-\Lap_{G}u)\ge0.
\end{align*}
By \cref{lem:ae_viscosity}, $u_{-}$ is a viscosity subsolution and
$u_{+}$ is a viscosity supersolution of
\eqref{eq:VV_forward_viscous}. Since all three functions have initial
datum $\phi$, comparison gives
\[
  u(g,\tau)-\eps M\tau
  \le u^{\eps}(g,\tau)
  \le u(g,\tau)+\eps M\tau.
\]
Taking the supremum over $0\le\tau\le T$ and reversing time gives the
first inequality in \eqref{eq:VV_smooth_rate}. The second follows from
the local coordinate expression of $\Lap_{G}$ and compactness of $G$.
\end{proof}

\section{Additional experiments}
\label{app:experiments}

This appendix collects two diagnostics of Example~1, the refinement study under the admissible couplings and the smooth-regime measurements of the viscous bias and of the rate in the time step.

\subsection{Accuracy under admissible couplings}
\label{sssec:ex1_highres}

As noted in \cref{sssec:ex1_canonical}, the canonical-coupling errors are dominated by the accumulated filter bias, while fewer and longer steps at the same resolution cost no accuracy on this problem. We refine along the member $\nu_{\delta}=1/2$, taking
\[
\delta=c_{0}N^{-1/2},\qquad K=\max\bigl\{3,\ \mathrm{round}(T\sqrt N/c_{0})\bigr\},\qquad c_{0}=2,\quad\eps=0,
\]
which keeps $\delta\to0$ and $\delta N^{2}\to\infty$ as $N$ grows, so that \cref{thm:finite_rank_lipschitz_rate} still guarantees convergence. Over the measured range the rounding leaves $K=3$ for the Fej\'er-Markov runs, which therefore isolate the decay of the filter bias at essentially fixed time step. The control set is refined to $400$ Fibonacci directions, and the directional contribution $u_{\max}T(1-\cos r_{\mathrm{cov}})$ drops to about $0.014$.

\Cref{tab:ex1_highres} reports the resulting errors for the finite-rank Fej\'er-Markov filter, whose kernel scale is $N_{f}\approx L/4$, and for the heat filter at its natural scale $N=L$. Both iterations preserve the bound $\min_{g}U_{k}\ge0$ at every band and at zero viscosity. The sup error decays with fitted order $0.86$ in $N$ for the heat filter and $0.99$ in $N_{f}$ for the Fej\'er-Markov filter, in line with the bias scale $\sqrt{K+1}/N$, and reaches $5.7\cdot10^{-2}$ in the sup norm and $2.2\cdot10^{-2}$ in $L^{2}$ at $L=64$ with six Bellman steps. When plotted against the kernel scale, the two filter families collapse onto a single curve (\cref{fig:ex1_highres}), so the accuracy is governed by the kernel scale and the number of filter applications alone. The heat filter is not finite-rank and its sampled realization is in principle subject to aliasing. Recomputing the $L=16$ row with the band-$2L$ quadrature of \cref{prop:discrete_markov_filter} changes the sup error by $2\cdot10^{-3}$, which is negligible at this accuracy.

\begin{table}[t]
\centering
\caption{Example 1. Sup and $L^{2}$ errors over the full grid at $t=0$ against the exact solution \eqref{eq:ex1_eikonal_V}, under the admissible coupling $\delta=c_{0}N^{-1/2}$ of \eqref{eq:admissible_scaling} with $c_{0}=2$ and $\eps=0$, using $401$ control candidates. The kernel scale is $N_{f}\approx L/4$ for the Fej\'er-Markov filter and $N=L$ for the heat filter.}
\label{tab:ex1_highres}
\small
\begin{tabular}{rrrcc@{\qquad}rcc}
\toprule
& \multicolumn{4}{c}{Fej\'er-Markov ($N=N_{f}$)} & \multicolumn{3}{c}{heat filter ($N=L$)}\\
\cmidrule(lr){2-5}\cmidrule(lr){6-8}
$L$ & $N_{f}$ & $K$ & sup & $L^{2}$ & $K$ & sup & $L^{2}$\\
\midrule
16 &  4.24 & 3 & 0.512 & 0.268 & 3 & 0.190 & 0.085\\
24 &  6.24 & 3 & 0.343 & 0.166 & 4 & 0.134 & 0.056\\
32 &  8.25 & 3 & 0.260 & 0.121 & 4 & 0.103 & 0.042\\
48 & 12.25 & 3 & 0.176 & 0.078 & 5 & 0.073 & 0.028\\
64 & 16.25 & 3 & 0.135 & 0.059 & 6 & 0.057 & 0.022\\
\bottomrule
\end{tabular}
\end{table}

\begin{figure}[t]
\centering
\includegraphics[width=0.66\linewidth]{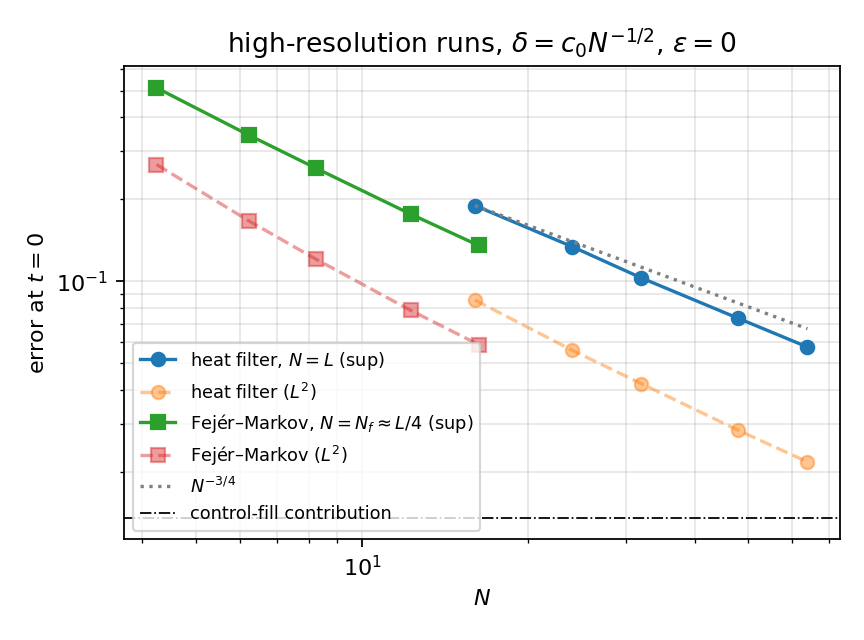}
\caption{Example 1. The errors of \cref{tab:ex1_highres} plotted against the kernel scale, equal to $N_{f}$ for the Fej\'er-Markov filter and to $L$ for the heat filter. The two families collapse onto a single curve. The dash-dotted line marks the directional contribution $u_{\max}T(1-\cos r_{\mathrm{cov}})$.}
\label{fig:ex1_highres}
\end{figure}

\subsection{Viscous bias and rate in the time step}
\label{app:smooth_rates}

On the commuting problem of \cref{sssec:ex1_saturation}, the viscous bias computed from \eqref{eq:ex1_exact} is
\begin{gather*}
\norm{V^{\eps}-V}_{\infty}=0.964,\ 0.603,\ 0.345,\ 0.186,\ 0.097\\
\text{for}\qquad
\eps=0.2,\ 0.1,\ 0.05,\ 0.025,\ 0.0125,
\end{gather*}
with local orders rising to $0.94$, the first-order smooth-regime rate of \cref{thm:VV} rather than the generic $\sqrt{\eps}$. On the minimum-effort regulation problem $\dot g=g\widehat u$ with $|u|\le2$, $\ell(u)=\frac14|u|^{2}$ and $\phi=1-\cos\theta$, whose terminal cost is band-limited at $\ell=1$, refining the time step at matched band $L=16$ and matched total filter exposure, with $L_{N}\propto\sqrt K$, gives sup errors
\[
2.5\cdot10^{-2},\ 1.1\cdot10^{-2},\ 4.8\cdot10^{-3},\ 1.6\cdot10^{-3}
\quad\text{for}\quad K=10,20,40,80.
\]
This is first order in $\delta$. It is the expected rate on this smooth problem, since one IMEX step is a Lie-Trotter splitting of the viscous flow with local error $O(\delta^{2})$, and the non-expansiveness of the step makes the local errors accumulate at most additively over the $K=T/\delta$ steps.

\end{document}